\newtheorem{thm}{Theorem}[subsection]
\newtheorem*{thm*}{Theorem}
\newtheorem{lem}[thm]{Lemma}
\newtheorem{cor}[thm]{Corollary}
\newtheorem{prop}[thm]{Proposition}
\newtheorem{conj}[thm]{Conjecture}
\newtheorem{thmIntro}{Theorem}
\newtheorem{corIntro}[thmIntro]{Corollary}
\theoremstyle{definition}
\newtheorem{define}[thm]{Definition}
\newtheorem{defthm}[thm]{Definition-Theorem}
\newtheorem*{define*}{Definition}
\newtheorem{ex}[thm]{Example}
\newtheorem*{nota*}{Notation}
\newtheorem{rem}[thm]{Remark}
\newtheorem*{rem*}{Remark}
\newcommand{\D}{\mathcal{D}}
\newcommand{\U}{\mathcal{U}}
\newcommand{\X}{\mathcal{X}}
\newcommand{\Y}{\mathcal{Y}}
\newcommand{\T}{\mathcal{T}}
\renewcommand{\S}{\mathcal{S}}
\newcommand{\F}{\mathcal{F}}
\newcommand{\Hom}{\mathrm{Hom}}
\newcommand{\Ext}{\mathrm{Ext}}
\newcommand{\End}{\mathrm{End}}
\newcommand{\image}{\mathrm{Im}}
\renewcommand{\dim}{\mathrm{dim}}
\newcommand{\supp}{\mathrm{supp}}
\renewcommand{\ker}{\mathrm{ker}}
\newcommand{\coker}{\mathrm{coker}}
\newcommand{\rk}{\mathrm{rk}}
\newcommand{\mods}{\mathsf{mod}}
\newcommand{\Db}{\mathcal{D}^b}
\newcommand{\tors}{\mathsf{tors}}
\newcommand{\sbrick}{\mathsf{sbrick}}
\newcommand{\brick}{\mathsf{brick}}
\newcommand{\add}{\mathsf{add}}
\newcommand{\Filt}{\mathsf{Filt}}
\newcommand{\Fac}{\mathsf{Fac}}
\newcommand{\Sub}{\mathsf{Sub}}
\newcommand{\cone}{\mathsf{cone}}
\newcommand{\cocone}{\mathsf{cocone}}
\newcommand{\lperp}[1]{\prescript{\perp}{}{#1}}
\newcommand{\inv}{\mathrm{inv}}
\newcommand{\src}{\mathrm{src}}
\newcommand{\tar}{\mathrm{tar}}
\newcommand{\suppo}{\supp^{\circ}}
\newcommand{\covers}{{\,\,\,\cdot\!\!\!\! >\,\,}}
\newcommand{\covered}{{\,\,<\!\!\!\!\cdot\,\,\,}}
\newcommand{\des}{\mathrm{des}}
\newcommand{\asc}{\mathrm{asc}}
\title[Pairwise Compatibility for 2-Simple Minded Collections II]{Pairwise Compatibility for 2-Simple Minded Collections II: Preprojective Algebras and Semibrick Pairs of Full Rank}
\author{Emily Barnard}
\address{Department of Mathematical Sciences, DePaul Universsity, Chicago, IL 60604, USA}\email{e.barnard@depaul.edu}
\author{Eric J. Hanson}
\address{Department of Mathematics, Brandeis University, Waltham, MA 02453, USA}\email{ehanson4@brandeis.edu}
\date{May 16, 2022}
\subjclass[2020]{
16G20, 05E10}
\begin{document}

\noindent This version of the article has been accepted for publication, after peer review (when applicable) and is subject to Springer Nature’s AM terms of use, but is not the Version of Record and does not reflect post-acceptance improvements, or any corrections. The Version of Record is available online at: https://doi.org/10.1007/s00026-022-00585-4.

\bigskip

\maketitle

\begin{abstract}
    Let $\Lambda$ be a finite-dimensional associative algebra over a field. A \emph{semibrick pair} is a finite set of $\Lambda$-modules for which certain Hom- and Ext-sets vanish. A semibrick pair is \emph{completable} if it can be enlarged so that a generating condition is satisfied. We prove that if $\Lambda$ is $\tau$-tilting finite with at most 3 simple modules, then the completability of a semibrick pair can be characterized using conditions on pairs of modules. We then use the weak order to construct a combinatorial model for the semibrick pairs of preprojective algebras of type $A_n$. From this model, we deduce that any semibrick pair of size $n$ satisfies the generating condition, and that the dimension vectors of any semibrick pair form a subset of the column vectors of some $c$-matrix. Finally, we show that no ``pairwise" criteria for completability exists for preprojective algebras of Dynkin diagrams with more than 3 vertices.
\end{abstract}


\setcounter{tocdepth}{1}
\tableofcontents


\section{Introduction}
Let $\Lambda$ be a finite-dimensional algebra over an arbitrary field $K$. We denote by $\mods\Lambda$ the category of finitely-generated (right) $\Lambda$-modules and by $\Db(\mods\Lambda)$ the corresponding bounded derived category.
Recall that an object $S \in \mods\Lambda$, or more generally $S \in \Db(\mods\Lambda)$, is called a \emph{brick} if $\End_\Lambda(S)$ is a division algebra.
Following \cite{asai}, we call a (possibly empty) collection of hom-orthogonal bricks a \emph{semibrick}.
Let $\U$ and $\D$ be semibricks and write $\U[1]=\{T[1]\in  \Db(\mods\Lambda): T\in \U\}$. 
Then we say $\D\sqcup\U[1]$ is a \emph{semibrick pair} if for each $S\in \U$ and each $T\in \D$, we have $\Hom_\Lambda(S, T) = 0$ and $ \Ext_\Lambda^1(S,T) =0$.
 If in addition the bricks in $\D\sqcup \U[1]$ ``generate'' the bounded derived category $\Db(\mods\Lambda)$, then $\D\sqcup \U[1]$ is a \emph{2-term simple-minded collection}. (This is made more precise in Definition~\ref{def:semibrick_pair} and Remark~\ref{rmk:2-term}.) In this paper, we investigate when semibrick pairs can be enlarged to 2-term simple minded collections. More precisely, we have the following definitions.

\begin{define}
Let $\Lambda$ be a finite-dimensional algebra, and let $\D\sqcup\U[1]$ be a semibrick pair.
\begin{enumerate}
\item We say that $\D\sqcup\U[1]$ is \emph{completable} provided that there exists a 2-term simple minded collection $\D'\sqcup \U'[1]$ such that $\D \subseteq \D'$ and $\U \subseteq \U'$.
\item We say that $\D\sqcup\U[1]$ is \emph{pairwise completable} provided that for all $S \in \D$ and $T \in \U$ there exists a 2-term simple minded collection $\D_S^T\sqcup \U_S^T[1]$ with $S \in \D_S^T$ and $T\in \U_S^T$.
\item We say that $\Lambda$ has the \emph{pairwise 2-simple minded completability property}\footnote{The word \emph{compatibility} is used in place of completability in \cite{HI_smc}. We have chosen to use the term completability since, \emph{a priori}, determining whether a semibrick pair is completable is not characterized internally.} provided that each pairwise completable semibrick pair is completable.
\end{enumerate}
\end{define}

The pairwise 2-simple minded completability property is quite natural. For example, both representation finite hereditary algebras \cite{IT_exceptional} and Nakayama algebras \cite{HI_picture} have the pairwise 2-simple minded completability property. More specifically, in \cite{GM_noncrossing}, 2-term simple minded collections were classified using a combinatorial model for certain special Nakayama algebras called \emph{tiling algebras}. Not only do tiling algebras have the pairwise 2-simple minded completability property, but this pairwise condition can be described in terms of a (non)crossing condition for certain arcs in a disc. This also holds true more generally for arbitrary Nakayama algebras \cite{HI_picture} and ($\tau$-tilting finite, monomial) quotients of hereditary algebras of type $A$ and $\widetilde{A}$ \cite{HI_smc}. We create a similar model for preprojective algebras of type $A$ in Section~\ref{sec:typeA} of this paper.

From the perspective of representation theory, 2-term simple minded collections are in bijection with many other classes of objects which satisfy ``pairwise conditions". These include $\tau$-tilting pairs \cite{AIR}, 2-term silting complexes \cite{aihara}, and canonical join representations of functorially finite torsion classes \cite{BCZ,asai}. It is therefore surprising that there exist $\tau$-tilting finite algebras which do not have this property. For example, a recent paper by Igusa and the second author \cite{HI_smc} shows that a \linebreak $\tau$-tilting finite gentle algebra (whose quiver contains no loops or 2-cycles) has the pairwise 2-simple minded completability property if and only if its quiver contains no vertex of degree 3 or 4.

The purpose of the present paper is to explore the combinatorial structures which encode completability.
 We focus in particular on the preprojective algebras $\Pi_W$ of Dynkin type $W$ because they have a close connection to the weak (Bruhat) order.
 The weak order on $W$ is isomorphic to the lattice of torsion classes of $\Pi_W$ \cite[Theorem~0.2]{mizuno}.
 Moreover each algebraic quotient of $\Pi_W$ gives rise to a lattice quotient of the weak order \cite{DIRRT}.
 Further connections to cluster algebras, rowmotion, and geometry are discussed in Section~\ref{sec:motivation}.
 

\subsection{Organization and Main Results}

The contents of this paper are as follows. In Section~\ref{sec:motivation} we discuss several sources of motivation for studying the pairwise 2-simple minded completability property. In Section~\ref{sec:background}, we review necessary background information pertaining to semibrick pairs, 2-term simple minded collections, torsion classes, and mutation.

In Section~\ref{sec:rank3}, we prove our first main result.

\begin{thmIntro}[Theorem \ref{thm:3 vertex}]\label{Main_A}
Let $\Lambda$ be a $\tau$-tilting finite algebra such that $\mods\Lambda$ contains at most three simple modules (up to isomorphism). Then $\Lambda$ has the pairwise 2-simple minded completability property.
\end{thmIntro}

Theorem~\ref{Main_A} also allows us to characterize the completability and pairwise completability of a semibrick pair in terms of \emph{wide subcategories} (Theorem~\ref{thm:wide}).

In Section~\ref{sec:arcs}, we lay the foundation for our combinatorial model. We first review the weak order on the Weyl group $A_n$ and Reading's use of ``noncrossing arc diagrams'' to model the descents of permutations \cite{reading_arcs}. We then introduce \emph{2-colored noncrossing arc diagrams} (Definition~\ref{def:2-colored}), which are pairs of noncrossing arc diagrams satisfying some compatibility condition.

In Section~\ref{sec:typeA}, we consider preprojective algebras of Dynkin type A (Definition~\ref{def:preproj}). By extending the bijection between noncrossing arc diagrams and the semibricks over these algebras established in \cite{BCZ}, we obtain our third main result.

\begin{thmIntro}[Theorem~\ref{arc_pairwise_compat}, simplified]\label{Main_B}
    Let $\Pi_{A_n}$ be a preprojective algebra of Dynkin type $A$. Then there is a bijection between the set of semibrick pairs for $\Pi_{A_n}$ and the set of 2-colored noncrossing arc diagrams on $n+1$ nodes. Moreover, a semibrick pair $\D\sqcup \U[1]$ is a 2-term simple minded collection if and only if the corresponding 2-colored noncrossing arc diagram encodes the ascents and descents of some permutation.
\end{thmIntro}

From Theorem~\ref{Main_B}, we deduce several interesting consequences. Two notable ones are as follows. (See Definition-Theorem \ref{defthm:cVect} for an explanation of $c$-vectors and $c$-matrices.)

\begin{corIntro}[Corollaries~\ref{cor:Main_C1} and~\ref{cor:Main_C2}]\label{Main_C}
    Let $\Pi_{A_n}$ be a preprojective algebra of Dynkin type $A$, and let $\D \sqcup \U[1]$ be a semibrick pair. Then
    \begin{enumerate}
        \item $\D\sqcup \U[1]$ is a 2-term simple minded collection if and only if $|\D| + |\U| = n$.
        \item There exists a $c$-matrix of $\Pi_{A_n}$ which contains the dimension vectors of the bricks in $\D$ and the negatives of the dimension vectors of the bricks in~$\U$.
    \end{enumerate}
\end{corIntro}

\begin{rem}\label{rem:size_equals_rank}\
\begin{enumerate}
    \item For any 2-term simple minded collection $\D \sqcup \U[1]$ (over any algebra), the sum $|\D|+ |\U|$ is always equal to $\rk(\Lambda)$ \cite[Corollary~5.5]{KY}. Although our proof of Corollary~\ref{Main_C}(1) is specific to the type $A$ preprojective algebra, we do not know of any $\tau$-tilting finite algebra $\Lambda$ which admits a semibrick pair $\D\sqcup \U[1]$ which is not a 2-simple minded collection but satisfies $|\D| + |\U| = \rk(\Lambda)$.
    
    \item We note that if we replace $\Pi_{A_n}$ with one of the counterexamples to the pairwise 2-simple minded completability property found in \cite{HI_smc}, then the conclusion of Corollary~\ref{Main_C}(2) will not hold. This seems to indicate the the $c$-vectors and $c$-matrices are better behaved for preprojective algebras than they are in general.
\end{enumerate}
\end{rem}

In Section~\ref{sec:preprojective}, we study the pairwise 2-simple minded completability property for preprojective algebras of Dynkin type. In particular, we prove our final main theorem.

\begin{thmIntro}[Theorem~\ref{thm:preproj}]\label{Main_D}
    Let $W$ be a finite Weyl group. Then the preprojective algebra of type $W$ has the pairwise 2-simple minded completability property if and only if $W$ is of type $A_1$, $A_2$, or~$A_3$.
\end{thmIntro}


\section{Motivation}\label{sec:motivation}
    In this section, we give an overview of our motivation for this paper. In particular, we give examples and interpretations of the pairwise 2-simple minded completability property in representation theory, combinatorics, and geometry.

\subsection{Picture groups and picture spaces}

Our original motivation comes from the study of \emph{picture groups} and \emph{picture spaces}.
The picture group of an algebra was first defined by Igusa--Todorov--Weyman \cite{ITW} in the (representation finite) hereditary case and later generalized to \linebreak $\tau$-tilting finite algebras by the second author and Igusa \cite{HI_picture}.
It is a finitely presented group whose relations encode the structure of the lattice of torsion classes.
Recently, picture groups for valued Dynkin quivers of finite type were shown to be closely related to maximal green sequences \cite{IT_mgs}.
The corresponding picture space is the classifying space of the \emph{($\tau$)-cluster morphism category} of the algebra.
This category encodes the geometry of the support $\tau$-rigid pairs of the algebra and was first defined by Igusa--Todorov \cite{IT_exceptional} in the hereditary case and later generalized by Buan--Marsh \cite{BM}.
Using techniques developed in \cite{igusa_category}, the second author and Igusa have shown that the picture group and picture space have isomorphic (co-)homology when the algebra $\Lambda$ has the pairwise 2-simple minded completability property (plus one technical condition outlined in \cite{HI_picture}).

\subsection{Cover relations in lattices of torsion lattices}

In this paper we largely focus on the connection with torsion classes, where pairwise conditions are quite natural. In this context, Asai has shown that each 2-term simple minded collection corresponds to a set of bricks which ``label'' the upper and lower cover relations of a torsion class in the lattice $\tors \Lambda$ \cite{asai}.
(We review background on torsion classes and the lattice $\tors \Lambda$ in Section~\ref{tors_sec}. For the definition of cover relation, see Definition~\ref{cover_relation}.)
More precisely, following the construction in \cite{BCZ}, we say that a brick $S$ \emph{labels} an upper cover relation $\T\covered \T'$ in the lattice $\tors \Lambda$ provided that $\T' = \Filt(\T\cup S)$.
That is, $\T'$ is the closure of $\T\cup \{S\}$ under iterative extensions.
(The brick $S$ is called a \emph{minimal extending module}. See Definition~\ref{def:minextending} and Theorem~\ref{min_ext}.)
Dually, a brick $S$ \emph{labels} a lower cover relation $\T \covers \T''$ if $S$ labels the corresponding relation $(\T'')^\perp \covered \T^\perp$ in the lattice of torsion free classes.
In our notation, the set $\U[1]$ corresponds to the set of bricks labeling the upper cover relations for some torsion class $\T$, and $\D$ is the set of bricks labeling its lower cover relations.
 Asai's result then says that every 2-term simple minded collection appears as the labels of the upper and lower cover relations for some torsion class. More precisely, if $\D\sqcup \U[1]$ is a 2-term simple minded collection, then $\Filt\Fac(\D)$ is the unique torsion class with cover relations labeled by $\D\sqcup \U[1]$. Moreover, if the lattice $\tors\Lambda$ is finite, then the association $\D\sqcup \U[1] \mapsto \Filt\Fac(\D)$ is a bijection between 2-term simple minded collections and torsion classes. See Theorem~\ref{torsion_to_simple} for additional details.

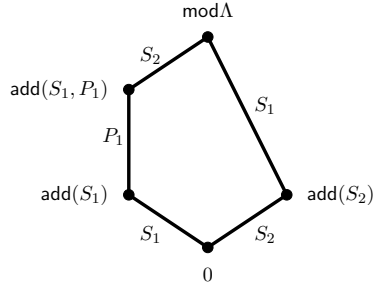
\begin{figure}[h]
\centering
\scalebox{.7}{
\begin{tikzpicture}
\draw[fill] (2,0) circle [radius=.1];
\draw[fill] (.5,1) circle [radius=.1];
\draw[fill] (3.5,1) circle [radius=.1];
\draw[fill] (.5,3) circle [radius=.1];
\draw[fill] (2,4) circle [radius=.1];

\draw [black, line width=.65 mm] (2,0) to (.5,1);
\draw [black, line width=.65 mm] (2,0) to (3.5,1);
\draw [black, line width=.65 mm] (.5,1) to (.5,3);
\draw [black, line width=.65 mm] (2,4) to (.5,3);
\draw [black, line width=.65 mm] (2,4) to (3.5,1);

\node [below] at (2,-.25) {$0$};
\node [left] at (.25,1) {$\add(S_1)$};
\node [left] at (.25,3) {$\add(S_1, P_1)$};
\node [above] at (2,4.25) {$\mods \Lambda$};
\node [right] at (3.75,1) {$\add(S_2)$};

\node [left] at (1.25,.25) {$S_1$};
\node [left] at (1.25,3.65) {$S_2$};
\node [left] at (.55,2.1) {$P_1$};
\node [right] at (2.75,.25) {$S_2$};
\node [right] at (2.75,2.7) {$S_1$};

\end{tikzpicture}}
\caption{\label{fig_brick_labeling} An example of the brick labeling for the lattice
$\tors A_2$ 
(of the hereditary algebra $KA_2$ where $A_2 =(1\rightarrow 2)$).}
\end{figure} 

\begin{ex}
In Figure~\ref{fig_brick_labeling} we display the brick labeling of the lattice of torsion classes for the hereditary algebra $KA_2$ where $A_2 =(1\rightarrow 2)$.
Consider the torsion class $\add(S_1, P_1)$.
The corresponding 2-term simple minded collection is $\D=P_1$ and $\U[1]=S_2$.
In general, we recall from Remark~\ref{rem:size_equals_rank} that $|\D|+|\U|$ is equal to the number of simples in $\mods \Lambda$.
\end{ex}

Now we can rephrase the pairwise 2-simple minded completability property in terms of the brick labeling of $\tors \Lambda$.
An algebra $\Lambda$ has the pairwise 2-simple minded completability property if whenever a semibrick pair $\D\sqcup \U[1]$ is not contained in the set of bricks labeling the upper and lower cover relations for some torsion class, there is a pair of bricks $S\in \D$ and $T[1] \in \U[1]$ such that no torsion class $\T$ has a lower cover relation labeled by $S$ and an upper cover relation labeled by $T$.

The bricks which label only the upper (or only the lower) cover relations of a torsion class are characterized by a pairwise condition, namely each pair of bricks is hom-orthogonal \cite{BCZ}. It also follows from the definition of a torsion class that if $S$ labels an upper cover relation and $T$ labels a lower cover relation, then $\Hom_\Lambda(S,T) = 0$.
Therefore it is surprising that bricks which label a mixture of upper and lower covers do \emph{not} generally satisfy a pairwise condition.

\subsection{Dynamical combinatorics}
One motivation for simultaneously studying the upper and lower cover relations of a torsion class comes from analyzing the so-called ``$\kappa$-map''.

The lattice of torsion classes is known to be (completely) semidistributive \cite{DIRRT}, and the labeling of cover relations by bricks corresponds to the labeling by join-irreducible elements \cite{BCZ,asai}. In particular, we can consider the map $\overline{\kappa}^d$ which sends a torsion class $\T$ with upper cover relations labeled by $\U$ to the (unique) torsion class $\overline{\kappa}^d(\T)$ with lower cover relations labeled by $\U$. Historically, the $\overline{\kappa}^d$ map is sometimes called ``rowmotion'' or ``Kreweras complement'' or simply ``kappa''.
In the context of Coxeter-Catalan combinatorics, the dynamics of $\overline{\kappa}^d$ provide the only known uniform bijection from the set of noncrossing partitions of type $W$ to the set of non-nesting partitions of type $W$, where $W$ is a finite Weyl group \cite{AST}.
This Coxeter-Catalan perspective was translated into representation theory by \cite{IngTho} and \cite{ringel2016} among others; and further explored very recently in \cite{TW}, \cite{Williams_Thomas19} and \cite{BTZ}.
The map $\overline{\kappa}^d$ appears to have important lattice-theoretic implications, as seen in  \cite{RST19}, and homomesic properties as explored in~\cite{Hopkins19}. 

The equivalence of the usual lattice-theoretic definition of $\overline{\kappa}^d$ in terms of join- and meet-irreducible elements and the one given here can be found in \cite{BTZ}.

In case the category $\mods\Lambda$ contains no cycles (so that if $M_1$ and $M_2$ are nonisomorphic indecomposable modules, then at least one of $\Hom_\Lambda(M_1,M_2)$ and $\Hom_\Lambda(M_2,M_1)$ is zero), Thomas and Williams show in \cite{TW} that $\overline{\kappa}^d$ can be computed using so-called ``flips''. However, there are many interesting classes of algebras whose module categories do contain cycles, such as the ``preprojective algebras'' considered in this paper. It remains an interesting question to find an efficient algorithm for computing $\overline{\kappa}^d$ for such algebras.

In general, this problem can be considered as finding a ``completion" of a semibrick. To explain this, let us restrict to the case where the lattice $\tors\Lambda$ is finite. In this case, given a semibrick $\D$, we have $\overline{\kappa}^d(\Filt\Fac(\D)) = \Filt\Fac(\U)$, where $\U$ is the (unique) semibrick making $\D\sqcup \U[1]$ into a 2-term simple minded collection. The semibrick $\U$ may be difficult to compute, but when our algebra satisfies the pairwise 2-simple minded completability property, the computation can be carried out as follows:
\begin{enumerate}
    \item Choose an ordering of $\D = \{D_1,\ldots,D_k\}$ and let $\mathcal{S}_0$ be the set of bricks in $\mods\Lambda$ (up to isomorphism).
    \item For $1 \leq j \leq k$, let $\mathcal{S}_j$ be the set of bricks $U \in \mathcal{S}_{j-1}$ for which $D_j \sqcup U[1]$ is a completable semibrick pair.
    \item The set $\mathcal{S}_k$ will contain a unique semibrick $\U$ of size $n-k$, where $n$ is the number of (non-isomorphic) simple modules in $\mods\Lambda$. This is the semibrick for which $\D\sqcup \U[1]$ is a 2-term simple minded collection; or, equivalently, for which $\overline{\kappa}^d(\Filt\Fac(\D)) = \Filt\Fac(\U)$.
\end{enumerate}
When the pairwise 2-term simple minded completability property does not hold, this algorithm will no longer be sufficient. Indeed, in step 2, there is in general no guarentee that if both $D_1\sqcup U$ and $D_2 \sqcup U$ are completable then $D_1\sqcup D_2 \sqcup U[1]$ is completable as well. Moreover, even if we replace step 2 with the requirement that $\left(\bigsqcup_{i = 1}^j D_i\right)\sqcup U[1]$ be completable for each $j$, there is no guarantee that the set $\mathcal{S}_k$ will contain a unique semibrick of size $n-k$. We do not, however, know of a $\tau$-tilting finite algebra where this is not the case (see Section \ref{sec:discussion}).

\subsection{$c$-vectors}\label{sec:cVect}
Another place that 2-term simple minded collections appear is in the ``wall-and-chamber structures'' associated to finite-dimensional algebras. Given a finite-dimensional algebra, King's stability conditions \cite{king} can be used to define a collection of codimension-1 subspaces of Euclidean space $\mathbb{R}^n$. These codimension-1 subspaces are referred to as ``walls'' and the closure of a connected component of the complement of the walls is called a ``chamber'' or ``region''. See e.g. \cite{IOTW,BST}. For some algebras, such as the ``preprojective algebras'' considered in this paper, the corresponding wall-and-chamber structure is actually a (simplicial) hyperplane arrangement.

The wall-and-chamber structure of an algebra comes with a natural choice of base region. Given a wall $H$, this induces a choice of normal vector $\vec{n}_H$ so that $\vec{n}_H \cdot v < 0$ for any vector $v$ in the base region. This defines a notion of the ``positive side'' and ``negative side'' of a wall.

Now let $R$ be a region. We say a wall $H$ is a \emph{lower facet} (resp. \emph{upper facet}) of $R$ if $H\cap R$ is $(n-1)$-dimensional and $R$ lies on the positive side (resp. negative side) of $H$. Now denote
\begin{eqnarray*}
    \mathfrak{d}(R) &=& \{\vec{n}_H: H \text{ is a lower facet of }R\}\\
    \mathfrak{u}(R) &=& \{-\vec{n}_H: H\text{ is an upper facet of }R\}
\end{eqnarray*}
The set of vectors $\mathfrak{d}(R) \cup \mathfrak{u}(R)$ is referred to as a \emph{$c$-matrix} of the algebra, and the individual vectors are referred to as \emph{$c$-vectors}. See Definition-Theorem \ref{defthm:cVect}. We note that under this formulation, a $c$-matrix is a set of vectors, not an actual matrix. See Section \ref{tors_sec} for additional discussion.

For simplicity, suppose $\Lambda$ is a basic, elementary algebra (so that no indecomposable projective module appears more than once in the direct sum decomposition of $_\Lambda\Lambda$ and the endomorphism ring of any simple module is isomorphic to the field $K$). A result of Treffinger \cite{treffinger} then shows that for any region $R$, there exists a 2-term simple minded collection $\D\sqcup \U[1]$ so that $\mathfrak{d}(R)$ consists of the dimension vectors of the bricks in $\D$ and $\mathfrak{u}(R)$ consists of the negatives of the dimension vectors of the bricks in $\U$. One may then ask about a ``pairwise characterization'' of $c$-matrices. More specifically, let $\mathcal{M}$ be a set of $c$-vectors and suppose that for all pairs $v_i, v_j \in \mathcal{M}$ there exists a $c$-matrix containing $v_i$ and $v_j$. Having $c$-matrices characterized by pairwise conditions would then mean that there exists a $c$-matrix containing $\mathcal{M}$. 

For representation finite hereditary algebras, bricks can only share a dimension vector if they are isomorphic. For these algebras, the pairwise 2-simple minded completability property is thus equivalent to $c$-matrices being characterized by pairwise conditions. (This is actually the approach used in \cite{IT_exceptional} to prove the pairwise 2-simple minded completability property for hereditary algebras of finite type.) In general, however, even algebras which satisfy the pairwise 2-simple minded completability property may not have $c$-matrices characterized by pairwise conditions. See Remark~\ref{rem:notPair} for an example. Even so, for preprojective algebras of type $A$, we show in Corollary~\ref{cor:Main_C2} that it is possible to relate $c$-matrices directly to semibrick pairs.


\section{Background}\label{sec:background}
        Let $\Lambda$ be a finite-dimensional algebra over an arbitrary field $K$. We will assume that $\Lambda$ is basic. We denote by $\mods\Lambda$ the category of finitely generated (right) $\Lambda$-modules and by $\Db(\mods\Lambda)$ the bounded derived category of $\mods\Lambda$ with shift functor $[1]$. 
    We denote by $\rk(\Lambda)$ the number of simple modules in $\mods\Lambda$ up to isomorphism.
    
    Throughout this paper, we assume all subcategories are full and closed under isomorphism. Thus we can identify subcategories with the set of (isoclasses of) objects they contain. Given a subcategory $\mathcal{C}\subseteq\mods\Lambda$ we denote by $\add(\mathcal{C})$ (resp. $\Fac(\mathcal{C}),\Sub(\mathcal{C}))$ the subcategory of $\mods\Lambda$ consisting of direct summands (resp. factors, submodules) of finite direct sums of the objects in $\mathcal{C}$. Likewise, we denote by $\Filt(\mathcal{C})$ the subcategory of $\mods\Lambda$ consisting of objects $M \in \mods\Lambda$ for which there exists a finite filtration
    $$0 = M_0 \subsetneq M_1 \subsetneq \cdots \subsetneq M_k = M$$
    so that $M_i/M_{i-1} \in \mathcal{C}$ for all $i$. Finally, we say two objects $X,Y \in \Db(\mods\Lambda)$ are \emph{hom-orthogonal} if $\Hom_\Lambda(X,Y) = 0 = \Hom_\Lambda(Y,X)$.

\subsection{Semibrick pairs and 2-term simple minded collections}\label{tors_sec}

We denote by $\brick\Lambda$ and $\sbrick\Lambda$ the sets of bricks and semibricks in $\mods\Lambda$ (up to isomorphism).
Recall that a semibrick is a collection of bricks, each pair of which is hom-orthogonal.

\begin{define}\label{def:semibrick_pair}
    Let $\Lambda$ be an arbitrary finite-dimensional algebra. Let $\D,\U\in\sbrick\Lambda$ and let $\X = \D\sqcup \U[1]$.
    \begin{enumerate}
        \item\ If $\Hom_\Lambda(\D,\U) = 0 = \Ext_\Lambda^1(\D,\U)$, then $\X$ is called a \emph{semibrick pair}.
        \item\  If in addition the smallest triangulated subcategory of $\Db(\mods\Lambda)$ containing $\X$ which is closed under direct summands is $\Db(\mods\Lambda$), then $\X$ is called a \emph{2-term simple minded collection}.
    \end{enumerate}
\end{define}
\begin{rem}\label{rmk:2-term}
\normalfont
The original definition of a simple minded collection comes from \cite{al-nofayee}, and requires that $\X$ be a collection of hom-orthogonal bricks in $\Db(\mods\Lambda)$ with $\Hom_{\Db(\mods\Lambda)}(\X,\X[<0]) = 0$.
A 2-term simple minded collection additionally satisfies that each homology of $\X$ vanishes outside of degree 0 and -1.
In \cite[Remark~4.11]{BY}, it is shown that each $X_i$ in a 2-term simple minded collection is either a module or a shift of a module in $\Lambda$, so we take this as our definition here.
\end{rem}

\begin{ex}
Let $\Lambda$ be the hereditary algebra of type $A_2$ from Figure~\ref{fig_brick_labeling}, and consider $P_1\sqcup S_2[1]$.
We observe that there are no non-zero homomorphisms $P_1\to S_2$, and $\Ext_\Lambda^1(P_1,S_2)=0$ because $P_1$ is projective.
Therefore $P_1\sqcup S_2[1]$ is a semibrick pair.
Since $S_1$ is the cokernel of the map $S_2 \hookrightarrow P_1$, we obtain both simple modules after closing under triangles.
Hence $P_1 \sqcup S_2[1]$  ``generates'' $\Db(\mods\Lambda)$.
Therefore $P_1\sqcup S_2[1]$ is a 2-term simple minded collection.
\end{ex}

For simplicity, let us now suppose that the algebra $\Lambda$ is elementary, meaning that $\End_\Lambda(S) \cong K$ for any simple module $S \in \mods\Lambda$. Choose an ordering $P_1,\ldots,P_{\rk(\Lambda)}$ on the (isomorphism classes of) indecomposable projective modules in $\mods \Lambda$. For $j \in \{1,\ldots,\rk(\Lambda)\}$, denote by $e_j$ the $j$-th standard basis vector of $\mathbb{Q}^{\rk(\Lambda)}$. Then for $M \in \mods\Lambda$, the \emph{dimension vector} of $M$ is given by
$$\underline{\dim}(M) = \sum_{j = 1}^{\rk(\Lambda)} e_j \cdot \dim_k\Hom_\Lambda(P_j,M).$$

We now wish to describe the $c$-vectors and $c$-matrices of an elementary algebra. Rather than give the original definition of Fu~\cite{fu}, we use the following characterization from \cite{treffinger}.

\begin{defthm}\label{defthm:cVect}
    Let $\Lambda$ be an elementary algebra.
    \begin{enumerate}
        \item Let $\D \sqcup \U[1]$ be a 2-term simple minded collection in $\mods\Lambda$. Then we say $$\{\underline{\dim}(S):S \in \D\} \cup \{-\underline{\dim}(T): T \in \U\}$$
        is a \emph{$c$-matrix} of $\Lambda$.
        \item A vector $v \in \mathbb{Q}^{\rk(\Lambda)}$ is called a \emph{$c$-vector} if there exists a $c$-matrix $\mathcal{M}$ with $v \in \mathcal{M}$.
    \end{enumerate}
\end{defthm}

We emphasize that under this formulation, a $c$-matrix is a collection of vectors, rather than an actual matrix.

\begin{rem}\label{rem:IThereditary}\
\begin{enumerate}
    \item The name $c$-vector comes from the relationship between representation theory and cluster algebras. Indeed, if $Q$ is an acyclic quiver, then the $c$-vectors of the path algebra $KQ$ coincide with the $c$-vectors of the cluster algebra of type $Q$.
    
    \item Ingalls and Thomas showed in \cite{IngTho} that the $c$-vectors of representation-finite hereditary algebras are characterized by pairwise conditions. This, and the fact that $c$-vectors and bricks are in bijection for such algebras, led to Igusa and Todorov's proof that these algebras satisfy the pairwise 2-simple minded completability property in \cite{IT_exceptional}.
\end{enumerate}
\end{rem}

\subsection{Torsion classes}
Now let $\T,\F$ be (full, closed under isomorphism) subcategories of $\mods\Lambda$. Then the pair $(\T,\F)$ is called a \emph{torsion pair} if each of the following holds:
\begin{enumerate}
\item $\Hom_\Lambda (M,N) = 0$ for all $M\in \T$ and $N\in \F$.
\item $\Hom_\Lambda(M,-)|_\F  = 0$ implies that $M\in \T$.
\item $\Hom_\Lambda (-,N)|_\T = 0$ implies that $N\in \F$.
\end{enumerate}
For a torsion pair $(\T,\F)$, we say that $\T$ is a \emph{torsion class}, and $\F$ is a \emph{torsion free class}.
Let $\T^\perp$ denote the subcategory $\{X\in \mods \Lambda: \Hom_\Lambda (T,X)=0 \text{ for all $T\in \T$}\}$ and define $\lperp{\F}$ analogously.
Note that $\T^\perp = \F$ and $\T = \lperp{\F}$ when $(\T,\F)$ is a torsion pair.
In particular, $\T \cap \F = 0$. It is well known that a subcategory is a torsion class if and only if it is closed under isomorphisms, quotients and extensions. Dually, a subcategory is a torsion free class if and only if it is closed under subobjects and extensions.
See \cite[Proposition~VI.1.4]{ASS}.

We partially order the set of all torsion classes of $\mods \Lambda$ by inclusion (i.e. $\T \le \T'$ provided that $\T \subseteq \T'$) and we denote this poset $\tors \Lambda$.
\begin{define}\label{cover_relation}
A \emph{cover relation} in a poset is a pair $\T\covered \T'$ satisfying $\T< \T'$, and for all $\S$ such that $\T<\S\le \T'$, we have $\S=\T'$. The cover relation $\T\covered \T'$ is an \emph{upper} cover relation for $\T$ and a \emph{lower} cover relation for $\T'$.
\end{define}
\begin{rem}
It is well known that the poset $\tors \Lambda$ is a \emph{lattice}, in which the smallest upper bound for torsion classes $\T$ and $\T'$ is $\Filt(\T\cup \T')$ and the greatest lower bound is $\T\cap \T'$.
See \cite{GM_lattice,IRTT}.
In this paper we do not use the lattice properties of $\tors \Lambda$.
\end{rem}

\begin{rem}
The torsion free classes of $\mods \Lambda$ can also be partially ordered by inclusion.
Indeed, this poset is anti-isomorphic to $\tors\Lambda$.
So there is a cover relation torsion classes $\T\covered \T'$ if and only if $(\T')^\perp\covered \T^\perp$.
\end{rem}
We restrict our attention to algebras for which the lattice $\tors\Lambda$ is finite. (Finiteness allows us to reframe completability in terms of mutation. See Theorem~\ref{thm:mutation}.)
By \cite{DIJ} the following are equivalent:
\begin{enumerate}
    \item $\tors \Lambda$ is finite.
    \item There are only finitely many (isoclasses of) bricks in $\mods\Lambda$.
    \item There are only finitely many support $\tau$-tilting pairs for $\Lambda$; that is, $\Lambda$ is \emph{$\tau$-tilting finite}.
    \item Every torsion class in $\mods\Lambda$ is \emph{functorially finite}.
\end{enumerate}
Examples of algebras satisfying these properties include representation-finite algebras and preprojective algebras of Dynkin type (see \cite[Theorem~0.2]{mizuno}).
\begin{ex}
The poset of torsion classes $\tors \Lambda$, for $\Lambda$ the hereditary algebra $KA_2$, is displayed in Figure~\ref{fig_brick_labeling}.
\end{ex}

\subsection{Brick labeling}
The goal of this section is to introduce a certain labeling of the cover relations of $\tors \Lambda$ which encodes all of the 2-term simple minded collections. We will follow the construction in \cite{BCZ}, although we remark that brick labeling is also defined independently in \cite{asai,BST,DIRRT}.

Given a torsion class $\T$ we would like to label each upper cover relation $\T\covered \T'$ by a module $M$ which is ``minimal'' such that closing $\T\cup M$ under extensions produces $\T'$.
The following definition characterizes such modules.

\begin{define}\label{def:minextending}\cite[Definition 1.1 and Definition 2.10]{BCZ}
    Let $\T$ be a torsion class. A module $M$ is a \emph{minimal extending module} for $\T$ if:
    \begin{enumerate}
        \item Every proper factor of $M$ is in $\mathcal{T}$.
        \item If $M \hookrightarrow X \twoheadrightarrow T$ is a nonsplit exact sequence and $T \in \T$, then $X \in \T$.
        \item $\Hom_\Lambda(\T,M) = 0$.
    \end{enumerate}
    Dually, let $\F$ be a torsion free class. Then $M$ is a \emph{minimal coextending module} for $\F$ if:
    \begin{enumerate}
        \item Every proper submodule of $M$ is in $\F$.
        \item If $F \hookrightarrow X \twoheadrightarrow M$ is a nonsplit exact sequence and $F \in \F$, then $X \in \F$.
        \item $\Hom_\Lambda(M,\F) = 0$.
    \end{enumerate}
\end{define}

The following lemma is well known (\cite[Lemma~2.1 and Lemma~2.2]{BCZ}).
\begin{lem}
Suppose that $\S$ is a set of indecomposable modules satisfying: If $M\in \S$ and $N$ is an indecomposable factor of $M$, then $N\in \S$.
Then $\Filt(\S)$ is a torsion class.
In particular, if $M$ is a minimal extending module of $\T$, then $\Filt(\T\cup M)$ is a torsion class.
\end{lem}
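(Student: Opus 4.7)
The statement has two parts, so I would tackle them in order, with the bulk of the work going into the first claim that $\Filt(\S)$ is a torsion class.

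\textbf{Step 1: closure under extensions.} This is immediate. Given a short exact sequence $0\to A\to B\to C\to 0$ with $A,C\in\Filt(\S)$, I concatenate a filtration of $A$ with the preimage in $B$ of a filtration of $C$; the successive quotients are unchanged, so they remain in $\S$, and $B\in\Filt(\S)$.

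\textbf{Step 2: closure under quotients (the main step).} Fix $M\in\Filt(\S)$ with filtration $0=M_0\subsetneq M_1\subsetneq\cdots\subsetneq M_k=M$ whose factors $M_i/M_{i-1}$ lie in $\S$, and a submodule $N\subseteq M$. The chain
\[0=(M_0+N)/N\subseteq (M_1+N)/N\subseteq\cdots\subseteq (M_k+N)/N=M/N\]
exhibits $M/N$ as an iterated extension of the modules $(M_i+N)/(M_{i-1}+N)$. Each of these is a quotient of $M_i/M_{i-1}\in\S$. Since $\S\subseteq\ind\Lambda$, the module $M_i/M_{i-1}$ is indecomposable; by Krull--Schmidt I may write
\[(M_i+N)/(M_{i-1}+N)\cong X_{i,1}\oplus\cdots\oplus X_{i,j_i}\]
with each $X_{i,j}$ indecomposable. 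Each $X_{i,j}$ is a direct summand of a quotient of $M_i/M_{i-1}$, hence is itself an indecomposable factor of a module in $\S$, and so belongs to $\S$ by hypothesis. Refining the filtration above by inserting preimages of the partial sums $X_{i,1}\oplus\cdots\oplus X_{i,\ell}$ for $\ell=1,\ldots,j_i$ produces a filtration of $M/N$ whose successive quotients are exactly the $X_{i,j}\in\S$. Thus $M/N\in\Filt(\S)$.

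\textbf{Step 3: the ``in particular'' claim.} Let $\S$ be the collection of indecomposables in $\T\cup\{M\}$; I must verify $\S$ meets the factor-closure hypothesis, and that $\Filt(\T\cup\{M\})=\Filt(\S)$. For the second, apply Krull--Schmidt to the objects of $\T$ so that every filtration by objects of $\T\cup\{M\}$ refines to one by indecomposables in $\S$. For the first, any indecomposable factor of an object in $\T$ lies in $\T$ because $\T$ is closed under quotients; and any indecomposable factor of $M$ is either isomorphic to $M$ itself or is a proper factor of $M$, and proper factors of $M$ lie in $\T$ by condition (1) of Definition~\ref{def:minextending}. Either way such a factor belongs to $\S$, so the first part of the lemma applies to give that $\Filt(\T\cup\{M\})=\Filt(\S)$ is a torsion class.

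The only real content is the quotient closure in Step 2, and the trick that makes it work is precisely the factor-closure hypothesis on $\S$: it lets me split a single quotient step $M_i/M_{i-1}\twoheadrightarrow(M_i+N)/(M_{i-1}+N)$ into several $\S$-layers via Krull--Schmidt. I do not foresee any genuine obstacle; the subtlety is simply being careful that the indecomposable summands produced by Krull--Schmidt really are factors of the original module rather than of some unrelated subquotient.
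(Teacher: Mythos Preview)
Your proof is correct and is essentially the standard argument. Note, however, that the paper does not actually supply a proof of this lemma: it simply records the statement as well known and cites \cite[Lemma~2.1.1 and Lemma~2.1.2]{BCZ}, so there is no in-paper argument to compare against. Your Step~2 is exactly the content of the cited Lemma~2.1.2 (stated later in the paper as Lemma~\ref{lem:filtfac}), and your Step~3 unpacks the cited Lemma~2.1.1; in both cases your reasoning matches the expected proof.
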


The following is quoted from \cite[Theorem~1.2 and Theorem~1.3]{BCZ}.
\begin{thm}\label{min_ext}
Let $\Lambda$ be an arbitrary finite-dimensional algebra.
\begin{enumerate}
\item For each cover relation $\T\covered \T'$ there is a unique (up to isomorphism) minimal extending module $M$ for $\T$.
Moreover, $\T'=\Filt(\T\cup M)$.
\item A module $M$ is a minimal extending module for some torsion class $\T$ if and only if $M$ is a brick.
\end{enumerate}
\end{thm}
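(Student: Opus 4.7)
The plan is to prove part (1) by constructing the minimal extending module directly as a minimal-length indecomposable in $\T' \cap \T^\perp$, verify its defining properties one at a time, and then address the brick characterization of part (2) by combining a short endomorphism argument with a maximality construction.

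For part (1), existence: the intersection $\T' \cap \T^\perp$ is non-empty, because any $X \in \T' \setminus \T$ yields a non-zero torsion quotient $X/tX$ (with respect to the torsion pair $(\T, \T^\perp)$) that lies in $\T' \cap \T^\perp$. I choose $M$ to be an indecomposable in $\T' \cap \T^\perp$ of minimal composition length. Property (c) is immediate from $M \in \T^\perp$, and property (a) follows because any proper factor of $M$ not in $\T$ would, after passing to its own torsion quotient and taking an indecomposable summand, produce a strictly shorter object in $\T' \cap \T^\perp$, contradicting minimality.

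The main obstacle will be property (b): given a non-split extension $M \hookrightarrow X \twoheadrightarrow T$ with $T \in \T$, I must show $X \in \T$. The strategy is to assume $X \notin \T$ and analyze $X/tX \in \T' \cap \T^\perp$. Using property (a), the image of $M$ in any indecomposable summand $N$ of $X/tX$ is either $0$ (in which case a further comparison of composition factors forces $X \in \T$ after all) or all of $M$; in the latter case, a careful bookkeeping of composition lengths among $X$, $tX$, $M$, and $T$, combined with $\T \cap \T^\perp = 0$, forces $X \cong M \oplus tX$ with $tX \cong T$, splitting the original extension and producing the desired contradiction. Once (a)--(c) hold, the ``moreover'' clause follows from the earlier lemma: $\Filt(\T \cup \{M\})$ is a torsion class strictly between $\T$ and $\T'$, hence equals $\T'$. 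Uniqueness is obtained by comparing filtrations of two candidates $M, M'$ inside $\T' = \Filt(\T \cup \{M\}) = \Filt(\T \cup \{M'\})$ and using $\Hom(\T, M) = 0$ to eliminate all non-$M'$ layers from the bottom of the filtration of $M$, and symmetrically.

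For part (2), the easy direction follows immediately from (a) and (c): a non-invertible endomorphism $f : M \to M$ has $\image(f)$ as a proper factor of $M$, hence in $\T$ by (a); but the inclusion $\image(f) \hookrightarrow M$ is then zero since $M \in \T^\perp$, so $f = 0$ and $\End(M)$ is a division algebra. For the converse, given a brick $M$, I take $\T$ to be maximal among torsion classes not containing $M$, which exists under the $\tau$-tilting finiteness assumption. Setting $\T' := \Filt(\T \cup \{M\})$, any torsion class strictly containing $\T$ must contain $M$ by maximality, hence contains $\T'$, giving $\T \covered \T'$. Verifying (a)--(c) for this pair uses the brick property of $M$ (to show that the torsion class generated by any proper factor of $M$ still avoids $M$ itself) together with the maximality of $\T$ (to conclude such torsion classes are contained in $\T$), so $M$ is the minimal extending module for $\T \covered \T'$.
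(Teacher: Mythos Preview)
The paper does not supply its own proof of this theorem; it simply cites the result from \cite{BCZ}. I will therefore assess your argument directly.

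Your treatment of part~(1) is essentially correct. Choosing $M$ to be a shortest indecomposable in $\T'\cap\T^\perp$ works, and properties (a)--(c) follow as you outline; in fact (b) needs no length bookkeeping once (a) and (c) are in hand: the image of $M$ in $X/tX$ is either a proper quotient of $M$, hence in $\T\cap\T^\perp=0$ by (a), forcing $M\subseteq tX$ and then $X/tX\in\Fac(T)\cap\T^\perp=0$; or it is all of $M$, whence $M\cap tX=0$ and $X/(M+tX)\in\T\cap\T^\perp=0$, giving $X=M\oplus tX$ and a splitting. Your uniqueness argument and the forward direction of (2) are also fine.

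The converse of (2), however, has a genuine gap. You take $\T$ to be \emph{a} maximal torsion class not containing $M$, but such a $\T$ is not unique, and for a wrong choice property~(c) can fail. Concretely, let $\Lambda=KA_2$ with $A_2=(1\to 2)$ and $M=P_1$. Both $\add(S_1)$ and $\add(S_2)$ are maximal among torsion classes not containing $P_1$, yet $\Hom(S_2,P_1)\neq 0$, so (c) fails for $\T=\add(S_2)$. Your verification of (a) breaks down for the same reason: the torsion class $\add(S_1)$ generated by the proper factor $S_1$ of $P_1$ is not contained in $\add(S_2)$, so maximality alone does not force it inside $\T$. (You also invoke $\tau$-tilting finiteness to obtain $\T$, whereas the theorem is stated for arbitrary~$\Lambda$.)

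The fix is to take $\T=\lperp{M}=\{X:\Hom(X,M)=0\}$ explicitly. This subcategory is closed under quotients and extensions, hence is a torsion class, and (c) holds by definition. For (a), if $q:M\twoheadrightarrow N$ is a proper quotient and $g:N\to M$ is any map, then $gq\in\End(M)$ is not an isomorphism, hence zero since $M$ is a brick, so $g=0$ and $N\in\lperp{M}$. Property~(b) then follows from (a) and (c) exactly as in your part~(1). With this choice the argument goes through for arbitrary~$\Lambda$.
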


\begin{rem}
For each cover relation $\F\covered \F'$ in the lattice of torsion free classes, there is a minimal coextending module $M$ such that $\Filt(\F \cup M) = \F'$.
Moreover, $M$ is a minimal extending module for the cover relation $\T\covered \T'$ if and only if it is a minimal \emph{coextending} module for ${\T'}^\perp\covered \T^\perp$.
\end{rem}

As in Figure~\ref{fig_brick_labeling}, we visualize labeling each cover relation of $\tors \Lambda$ with the corresponding minimal extending module.
Then we can read off every 2-term simple minded collection for $\Db(\mods\Lambda)$ as the sets of bricks labeling the upper and the lower cover relations for a given torsion class.
To make this precise, we write $\U(\T)$ for the set of minimal extending modules of $\T$ and $\D(\T)$ for the set of minimal coextending modules of $\T^\perp$.
Note that the bricks in $\D(\T)$ label the lower cover relations of $\T$ in $\tors \Lambda$.

\begin{ex}
Let $\Lambda$ be the hereditary algebra $KA_2$ from Figure~\ref{fig_brick_labeling}, and let $\T=\add(P_1, S_1)$.
Then $\U(\T)=S_2$ and $\D(\T)=P_1$.
\end{ex}

\begin{thm}\label{torsion_to_simple} Let $\Lambda$ be an arbitrary finite-dimensional algebra.

    \begin{enumerate}
        \item Let $\U$ be a collection of (isoclasses of) modules in $\mods\Lambda$. Then there exists a torsion pair $(\T,\F)$ for which $\U = \U(\T)$ if and only if $\U$ is a semibrick. Moreover, if $\Lambda$ is $\tau$-tilting finite, then the map $(\T,\F)\mapsto \U(\T)$ is a bijection from the set of torsion pairs to $\sbrick \Lambda$. The inverse map is $\U \mapsto (^\perp\U,\Filt\Sub(\U))$.
        \item  Let $\D$ be a collection of (isoclasses of) modules in $\mods\Lambda$. Then there exists a torsion pair $(\T,\F)$ for which $\D = \D(\T)$ if and only if $\D$ is a semibrick. Moreover, if $\Lambda$ is $\tau$-tilting finite, then the map $(\T,\F)\mapsto\mapsto \D(\T)$ is a bijection from the set of torsion pairs to $\sbrick \Lambda$. The inverse map is $\D \mapsto (\Filt\Fac(\D),\D^\perp)$.
        \item If $\Lambda$ is $\tau$-tilting finite, then the map $(\T,\F) \mapsto \D(\T) \sqcup \U(\T)[1]$ is a bijection from the set of torsion pairs to the set of 2-term simple minded collections for $\Lambda$. The inverse map is given by $\D\sqcup\U[1]\mapsto(\Filt\Fac(\D),\Filt\Sub(\U)) = (\lperp{\U},\D^\perp).$
    \end{enumerate}
\end{thm}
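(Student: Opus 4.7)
The plan is to prove parts (1) and (2) directly from the definition of minimal (co)extending modules, and then to deduce part (3) by combining them with the Adachi--Iyama--Reiten \cite{AIR} and Koenig--Yang \cite{KY} bijections connecting torsion pairs, support $\tau$-tilting pairs, and 2-term simple minded collections in the $\tau$-tilting finite setting.

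For part (1), I would first show that $\U(\T)$ is a semibrick. Each $M \in \U(\T)$ is a brick by Theorem~\ref{min_ext}(2), so it remains to check hom-orthogonality of distinct $M, N \in \U(\T)$. Given a nonzero $f \colon M \to N$: if $\ker(f) \neq 0$, then $\image(f)$ is a proper factor of $M$ and hence lies in $\T$ by Definition~\ref{def:minextending}(1), so the nonzero inclusion $\image(f) \hookrightarrow N$ contradicts $\Hom(\T, N) = 0$. Otherwise $f$ is injective and $M \not\cong N$ forces $N/M \neq 0$, giving a nonsplit short exact sequence $0 \to M \to N \to N/M \to 0$ with $N/M \in \T$ (a proper factor of the brick $N$); Definition~\ref{def:minextending}(2) applied to $M$ then forces $N \in \T$, again contradicting $\Hom(\T, N) = 0$. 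For the inverse map, given a semibrick $\U$, I would verify that $(^\perp\U, \Filt\Sub(\U))$ is a torsion pair by checking that $\Filt\Sub(\U)$ is closed under submodules and extensions, and then identify the minimal extending modules of $^\perp\U$ with $\U$ by checking conditions (1)--(3) for each $M \in \U$ using the semibrick property. Part (2) follows by the dual argument (or by applying part (1) to $\Lambda^{op}$).

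For part (3), I would first verify that $\D(\T) \sqcup \U(\T)[1]$ is a semibrick pair. Condition (3) of Definition~\ref{def:minextending} (and its dual) place $\D(\T) \subseteq {}^\perp\F = \T$ and $\U(\T) \subseteq \T^\perp = \F$, so the torsion pair property gives $\Hom(\D(\T), \U(\T)) = 0$. For Ext-vanishing, given $M \in \D(\T)$ and $N \in \U(\T)$, suppose $\Ext(M, N) \neq 0$ and pick a nonsplit extension $0 \to N \to X \to M \to 0$: Definition~\ref{def:minextending}(2) applied to the minimal extending $N$ (with $M \in \T$) forces $X \in \T$, while its dual for the minimal coextending $M$ (with $N \in \F$) forces $X \in \F$; then $X \in \T \cap \F = 0$ forces $M = N = 0$, contradicting that bricks are nonzero. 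For the generating condition, I would appeal to $\tau$-tilting finiteness: the torsion pair $(\T, \F)$ corresponds under \cite{AIR} to a support $\tau$-tilting pair, which under \cite{KY} corresponds to a unique 2-term simple minded collection, and a careful matching of the bijections identifies this collection with $\D(\T) \sqcup \U(\T)[1]$.

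The main obstacle is verifying the generating condition in part (3), since a direct proof that $\D(\T) \sqcup \U(\T)[1]$ spans $\Db(\mods\Lambda)$ as a triangulated subcategory closed under summands is delicate. Routing through the bijections with support $\tau$-tilting pairs supplies the generation property, at the cost of invoking $\tau$-tilting finiteness essentially; inverting the map via $\D \sqcup \U[1] \mapsto (\Filt\Fac(\D), \Filt\Sub(\U))$ then follows by combining the inverse maps from parts (1) and (2).
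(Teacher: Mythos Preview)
Your proposal is correct in outline, but you should be aware that the paper itself does not prove this theorem at all: it simply cites \cite[Theorem~1.0.8]{BCZ} for parts (1) and (2) and \cite[Theorem~2.12]{asai} for part (3). What you have written is therefore not a comparison point so much as an independent sketch of how those cited results are established.

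Your direct arguments are sound. The proof that $\U(\T)$ is a semibrick is clean, and the $\Ext$-vanishing argument in part (3)---playing condition (2) of Definition~\ref{def:minextending} for $N$ against its dual for $M$ to force the middle term into $\T\cap\F=0$---is exactly right. Where you are thin is on the inverse maps: in part (1) you gesture at checking that $({}^\perp\U,\Filt\Sub(\U))$ is a torsion pair with $\U({}^\perp\U)=\U$ and that $\T$ is recovered from $\U(\T)$, but you do not indicate how; these are the substantive steps of \cite[Theorem~1.0.8]{BCZ} and require a finite-length induction showing every object of $\F$ is filtered by submodules of objects in $\U(\T)$. Likewise in part (3), ``combining the inverse maps from parts (1) and (2)'' presupposes $\Filt\Fac(\D)={}^\perp\U$ for a 2-term simple minded collection $\D\sqcup\U[1]$, which is not automatic from (1) and (2) separately---it is precisely the content of the ``careful matching'' you defer to, and is where the generating condition on $\D\sqcup\U[1]$ actually enters (this is the heart of Asai's argument). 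So your route through \cite{AIR} and \cite{KY} is the same route \cite{asai} takes, and your sketch would succeed once those identifications are made explicit.
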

\begin{proof}
The first and second items follow from \cite[Theorem~1.8]{BCZ}.
The third item is from\linebreak \cite[Theorem~3.3]{asai}.

\end{proof}
\begin{rem}\label{rem:torsion_to_simple}
Suppose that $\Lambda$ is $\tau$-tilting finite. Then as an immediate consequence of Theorem~\ref{torsion_to_simple}, given a semibrick pair $\D\sqcup \U[1]$, there exists unique semibricks $\D'$ and $\U'$ such that $\D'\sqcup \U[1]$ and $\D\sqcup \U'[1]$ are both 2-term simple minded collections.
\end{rem}

The following will be used frequently throughout this paper.

\begin{prop}\label{prop:maximal}
    Let $\Lambda$ be an arbitrary finite-dimensional algebra and let $\D\sqcup\U[1]$ be a 2-term simple minded collection for $\Lambda$. Then
    \begin{enumerate}
        \item $\D\sqcup \U[1]$ is maximal in the sense that it is not properly contained in any semibrick pair.
        \item $|\D| + |\U| = \rk(\Lambda)$.
    \end{enumerate}
\end{prop}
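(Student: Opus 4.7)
Part (2) is immediate from the well-known formula $|\X| = \rk(\Lambda)$ for any simple-minded collection in $\Db(\mods\Lambda)$; this is cited from \cite[Corollary~5.5]{KY} in the discussion preceding Theorem~\ref{main_D} above, so the plan is simply to quote it.

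For part (1), the plan is to argue by contradiction. Suppose $\D \sqcup \U[1]$ is properly contained in a semibrick pair. After passing to a single new element, we may assume there is a brick $S \notin \D$ such that $(\D \cup \{S\}) \sqcup \U[1]$ is a semibrick pair, or dually a brick $T \notin \U$ such that $\D \sqcup (\U \cup \{T\})[1]$ is a semibrick pair. By Theorem~\ref{torsion_to_simple}(3), the 2-term simple minded collection $\D \sqcup \U[1]$ corresponds to the torsion pair $(\T, \F) = (\Filt\Fac(\D), \Filt\Sub(\U))$. I would then extract from the hypotheses two mutually contradictory statements about $S$: namely that $S \in \T$, while simultaneously $\Hom(D, S) = 0$ for every $D \in \D$.

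The second statement is immediate from the semibrick condition on $\D \cup \{S\}$ together with $S \notin \D$. For the first, I would push the vanishing $\Hom(S, \U) = 0$ through the closure operations: any monomorphism $N \hookrightarrow U$ with $U \in \U$ forces $\Hom(S, N) = 0$, so $\Hom(S, \Sub(\U)) = 0$; applying $\Hom(S, -)$ to a short exact sequence then shows that Hom-vanishing is inherited by extensions, and induction on filtration length gives $\Hom(S, \F) = 0$, equivalently $S \in \lperp{\F} = \T$. Since $S \in \T = \Filt\Fac(\D)$, any filtration of $S$ has a nonzero first step $S_1 \subseteq S$ with $S_1 \in \Fac(\D)$; composing a surjection $D \twoheadrightarrow S_1$ (for some $D \in \add(\D)$) with the inclusion $S_1 \hookrightarrow S$ produces a nonzero morphism $D \to S$, which forces $\Hom(D_i, S) \neq 0$ for some $D_i \in \D$, contradicting the second statement.

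The dual case (adding $T$ to $\U$) follows by the same argument with the roles of submodules/factor modules and inclusions/surjections swapped: the vanishing $\Hom(\D, T) = 0$ propagates through $\Fac$ and $\Filt$ to give $\Hom(\T, T) = 0$, hence $T \in \T^\perp = \F = \Filt\Sub(\U)$, and then the last step of a $\Sub(\U)$-filtration of $T$ yields a nonzero morphism $T \to U$ for some $U \in \add(\U)$, contradicting the hom-orthogonality in $\U \cup \{T\}$. The only (routine) obstacle in the plan is the bookkeeping of how Hom-vanishing transfers through the closure operators $\Sub$, $\Fac$, and $\Filt$; notably, the $\Ext$-vanishing in the definition of a semibrick pair is used only to ensure we start with a semibrick pair in the first place.
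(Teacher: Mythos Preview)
Your approach is essentially the paper's: both arguments hinge on the fact that $(\Filt\Fac(\D),\Filt\Sub(\U))$ is a torsion pair, then use $\Hom(\D,S)=0=\Hom(S,\U)$ to force $S=0$. The paper is marginally slicker---it handles both cases at once and finishes via $S\in\T\cap\F=0$ rather than unpacking a filtration---but your filtration argument is equivalent.

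One small gap: you invoke Theorem~\ref{torsion_to_simple}(3) to get the torsion pair, but that item is stated only for $\tau$-tilting finite $\Lambda$, whereas the proposition is for arbitrary $\Lambda$. The paper addresses exactly this point by citing \cite[Theorem~2.12]{asai} directly, which yields the torsion pair without the finiteness hypothesis. With that citation swapped in, your proof is complete.
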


\begin{proof}
    Item (2) is \cite[Corollary 5.5]{KY}. To prove item (1), we note that we have a torsion pair $(\Filt\Fac(\D),\Filt\Sub(\U))$, even if $\Lambda$ is not $\tau$-tilting finite (see \cite[Theorem 3.3]{asai}). Now suppose for a contradiction that $\D\sqcup\U[1]$ is properly contained in $\D'\sqcup\U'[1]$ and let $S \in (\D'\setminus\D)\cup(\U'\setminus\U)$. Since $\D'\sqcup\U'[1]$ is a semibrick pair, we know that $\Hom_\Lambda(\D,S) = 0 = \Hom_\Lambda(S,\U)$. However, this means that $S$ has neither a nonzero torsion part nor a nonzero torsion free part with respect to the torsion pair $(\Filt\Fac(\D),\Filt\Sub(\U))$. We conclude that $S = 0$, a contradiction. 
\end{proof}

\begin{rem}
    We note that Theorem \ref{torsion_to_simple} and Proposition \ref{prop:maximal} imply that, when $\Lambda$ is $\tau$-tilting finite, $|\D| \leq \rk(\Lambda)$ for all semibricks $\D$. Moreover, if $|\D| = \rk(\Lambda)$, then $\D$ is the collection of simple modules.
\end{rem}

We conclude with the following consequence of Proposition \ref{prop:maximal}.

\begin{cor}\label{cor:2vertex}
    Let $\Lambda$ be a  $\tau$-tilting finite algebra with $\rk(\Lambda) \leq 2$. Then $\Lambda$ has the 2-simple minded pairwise completability property.
\end{cor}

\begin{proof}
    We observe that if $\rk(\Lambda) = 1$ then $\mods\Lambda$ contains only a single brick (up to isomorphism). Thus there is nothing to show.
    
    Now suppose $\rk(\Lambda) = 2$ and let $\D\sqcup\U[1]$ be a pairwise completable semibrick pair. If $\D = \emptyset$ or $\U = \emptyset$, then $\D\sqcup\U[1]$ is completable by Remark \ref{rem:torsion_to_simple}. Otherwise, for all $S \in \D$ and $T \in \U$, Proposition \ref{prop:maximal}(2) implies that $S \sqcup T[1]$ is a 2-term simple minded collection. Proposition \ref{prop:maximal}(1) then implies that $\D = S$ and $\U = T$, so $\D\sqcup\U[1]$ is completable.
\end{proof}

\subsection{Mutation and completability}
In this section we recall the definitions of mutation and the notion of mutation compatibility for semibrick pairs.
We will use these definitions to determine when a semibrick pair is completable.

First we recall the notion of a left-approximation, following \cite{Kleiner}.
Let $\S$ be a subcategory of $\mods \Lambda$ that is closed under direct sums and extensions.
Given a module $T\in \mods \Lambda$, a morphism $f:T\to S_T$ with $S_T\in \S$ is said to be a \emph{left $\S$-approximation} if for every morphism $j:T\to S$ with $S\in\S$ there exists $h:S_T\to S$ such that $j=hf$.
We say that $f$ is a \emph{minimal left-approximation} if $f$ is a left minimal morphism; i.e. if for every endomorphism $s: S_T \to S_T$ satisfying $sf= f$ is an isomorphism.
The notion of a right $\S$-approximation is defined dually.
\begin{rem}
Suppose that $f:T\to S_T$ is a left $\S$-approximation and $S_T$ is a brick.
Then every nonzero endomorphism $s:S_T\to S_T$ is an isomorphism.
Therefore, $f$ is minimal.
\end{rem}

\begin{define}\cite[Definition 3.2]{HI_smc}\label{mutation}
    Let $\Lambda$ be $\tau$-tilting finite and let $\X = \D \sqcup \U[1]$ be a semibrick pair.
    \begin{enumerate}
        \item Let $S \in \D$. If for all $T \in \U$ there exists a left minimal ($\Filt S$)-approximation $g_{ST}^+:T \rightarrow S_T$ which is either mono or epi, we say $\X$ is \emph{singly left mutation compatible at $S$}. In this case, there is a new semibrick pair $\X' = \mu^+_{S}(\X)$, called the \emph{left mutation} of $\X$ at $S$, given as follows:
    \begin{enumerate}
        \item $\mu^+_S(S)=S[1]$.
        \item For all $T\neq S \in \D$, we have $\mu^+_S(T)$ is equal to $\cone(g_{ST}^+)$, where $g_{ST}^+:T[-1]\rightarrow S_T$ is a left minimal $(\Filt S$)-approximation.
        In particular, there is an exact sequence \linebreak $S_T \hookrightarrow \mu^+_S(T) \twoheadrightarrow T$.
        \item For all $T \in \U$, we have $\mu^+_S(T[1])$ is equal to $\cone(g_{ST}^+)$, where $g_{ST^+}:T\rightarrow S$ is a left minimal ($\Filt S$)-approximation.
        In particular, if $g_{ST^+}$ is mono, then $\mu^+_S(T[1])$ is $\coker(g_{ST}^+)$, and if $g_{ST}^+$ is epi, then $\mu^+_S(T[1])$ is $\ker(g_{ST}^+[1])$.
    \end{enumerate}
    \item Let $S \in \U$. If for all $T \in \D$ there exists a right minimal ($\Filt S$)-approximation $g_{ST}^-:S_T \rightarrow T$ which is either mono or epi, we say $\X$ is \emph{singly right mutation compatible at $S$}. In this case, there is a new semibrick pair $\X' = \mu^-_{S}(\X)$, called the \emph{right mutation} of $\X$ at $S$, given as follows:
    \begin{enumerate}
        \item $\mu^-_S([1]) = S.$
        \item For all $T\neq S \in \U$, we have $\mu_S^-(T[1])$ is equal to $\cocone(g_{ST}^-)[1]$, where $g_{ST}^-:S_T\rightarrow T[1]$ is a right minimal ($\Filt S$)-approximation.
        In particular, there is an exact sequence $T \hookrightarrow \mu^-_S(T)[-1] \twoheadrightarrow S_T$.
        \item For all $T \in \D$, we have $\mu_S^-(T)$ is equal to $\cocone(g_{ST}^-)[1]$, where $g_{ST}^-:S_T\rightarrow T$ is a right minimal ($\Filt S$)-approximation.
        In particular, if $g_{ST}^-$ is mono then $\mu^-_S(T)$ is $\coker(g_{ST}^-)$, and if $g_{ST}^-$ is epi then $\mu^-_S(T)$ is $\ker(g_{ST}^-)[1]$.
    \end{enumerate}
    \item We say $\X$ is \emph{singly left mutation compatible} if $\X$ is singly left mutation compatible at every $S \in \D$. Likewise, we say $\X$ is \emph{singly right mutation compatible} if $\X$ is singly right mutation compatible at every $S \in \U$.
    \end{enumerate}
\end{define}
We remark that left and right mutation are dual in the sense that if $\X$ is singly left mutation compatible at $S$ then $\mu^+_S(\X)$ is singly right mutation compatible at $S$ and $\mu^-_S\circ\mu^+_S(\X) = \X$.
The same sentence is true if we switch ``left'' and ``right'', and in this case $\mu^+_S\circ\mu^-_S(\X) = \X$.

\begin{rem}\label{rem:mutation}
    The mutation formulas in Definition \ref{mutation} are based on the formulas for the mutation of simple minded collections from \cite{KY}, specialized to the 2-term case. It is shown in \cite{BY} that these formulas send 2-term simple minded collections to 2-term simple minded collections. In particular, suppose $\D\sqcup \U[1]$ is a 2-term simple minded collection and let $S \in \D$ and $T \in \U$. Then since both $\mu^+_S(\D\sqcup \U[1])$ and $\mu^-_T(\D\sqcup \U[1])$ are 2-term simple minded collections, a left minimal $(\Filt S)$-approximation $g_{ST}^+:T\rightarrow S_T$ and a right minimal $(\Filt T)$-approximation $g_{TS}^-:T_S \rightarrow S$ are each either mono or epi. This means every pairwise completable semibrick pair is both singly left and singly right mutation compatible. Moreover, if $\D'\sqcup \U'[1]$ is a completable semibrick pair and $S' \in \D'$  (resp. $T' \in \U'$), then $\mu_{S'}^+(\D'\sqcup \U'[1])$ (resp. $\mu_{T'}^-(D'\sqcup \U'[1])$) is also completable.
\end{rem}

\begin{rem}
Suppose that $\X=\D\sqcup\U[1]$ is a 2-term simple minded collection.
Recall from Theorem~\ref{torsion_to_simple} that there exists a torsion class $\T$ such that $\U=\U(\T)$ and $\D=\D(\T)$.
(This means $\U$ is the set of minimal extending modules for $\T$ and $\D$ is the set of minimal coextending modules for $\T^\perp$).
In particular, each brick $S\in \D$ labels a lower cover relation $\T\covers \T'$ in $\tors \Lambda$.
The new semibrick pair $\X' = \mu^+_{S}(\X)$ is also a 2-term simple minded collection, and it corresponds to $\D(\T')\sqcup \U(\T')$.
Therefore, left mutation at $S$ corresponds to moving down by the cover relation $\T\covers \T'$.
\end{rem}

\begin{define}\cite[Definition 3.7]{HI_smc}\label{def:mut_compat}
Let $\Lambda$ be $\tau$-tilting finite and let $\X = \D \sqcup \U[1]$ be a semibrick pair.
We say $\X$ is \emph{mutation compatible} if one of the following hold.
\begin{enumerate}
    \item $\X = \U[1]$; that is, $\D = \emptyset$.
    \item $\X = \D$; that is, $\U = \emptyset$.
    \item $\X$ is singly left mutation compatible and there exist a sequence $\mu^+_{S_1},\ldots,\mu^+_{S_k}$ of left mutations and a semibrick $\U'$ so that  $\mu^+_{S_\ell}\circ\cdots\circ\mu^+_{S_1}(\X)$ is singly left mutation compatible for all $1 \leq \ell \leq k$ and $\mu^+_{S_k}\circ\cdots\circ\mu^+_{S_1}(\X) = \U'[1]$.
\end{enumerate}
\end{define}

\begin{rem}
    In Section~\ref{sec:rank3}, we will discuss \emph{wide subcategories} of $\mods\Lambda$. In particular, it will be a consequence of Lemma~\ref{lem:wide} that the semibrick $\U'$ in Definition~\ref{def:mut_compat}(3) will consist precisely of the simple objects in the smallest wide subcategory containing the bricks in $\D \cup \U$. In particular, we will have $\D \cup \U \subseteq \Filt(\U')$. For example, let $A_3 = (1\rightarrow 2 \rightarrow 3)$. Then $\X := S_1 \sqcup P_1[1]$ is a semibrick pair for $\mods KA_3$. Mutating at $S_1$, we obtain $\U' := \mu_{S_1,\X}^+(\X) = P_2[1] \oplus S_1[1]$.
\end{rem}

The following is \cite[Theorem 3.9]{HI_smc}. We include a proof here for completeness.
\begin{thm}\label{thm:mutation}
Let $\Lambda$ be $\tau$-tilting finite, and let $\X$ be a semibrick pair.
Then $\X$ is completable if and only if it is mutation compatible.
\end{thm}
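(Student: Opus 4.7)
The plan is to prove both implications by combining Remark~\ref{rem:mutation} (mutation preserves completability) with the duality $\mu^-_S \circ \mu^+_S = \Id$ whenever the mutation is defined.

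For the implication ``mutation compatible $\Rightarrow$ completable'', I would first observe that a semibrick pair of the form $\U'[1]$ is always completable: by Theorem~\ref{torsion_to_simple}(1) the semibrick $\U'$ equals $\U(\T)$ for a unique torsion class $\T$, and then $\D(\T) \sqcup \U'[1]$ is a 2-term simple minded collection containing $\U'[1]$ (the dual statement of Theorem~\ref{torsion_to_simple}(2) handles the case $\U = \emptyset$). Given the sequence of mutations, write $\X_0 = \X$ and $\X_i = \mu^+_{S_i}(\X_{i-1})$, so $\X_k = \U'[1]$ is completable by the observation above. I would then close out by downward induction on $i$: the duality yields $\X_{i-1} = \mu^-_{S_i}(\X_i)$ (since $S_i[1]$ now lies in the shifted part of $\X_i$), and Remark~\ref{rem:mutation} guarantees that if $\X_i$ is completable then so is $\mu^-_{S_i}(\X_i) = \X_{i-1}$.

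For the converse, fix a 2-term simple minded collection $\X^* = \D^* \sqcup \U^*[1]$ containing $\X$, corresponding to a torsion class $\T^*$. The idea is to build the desired mutation sequence greedily, mutating $\X$ and $\X^*$ in parallel. Set $\X^{(0)} = \X$ and $\X^{*(0)} = \X^*$; at stage $i$, if the $\D$ part of $\X^{(i)}$ is nonempty, pick any $S_{i+1}$ from it and set $\X^{(i+1)} = \mu^+_{S_{i+1}}(\X^{(i)})$ and $\X^{*(i+1)} = \mu^+_{S_{i+1}}(\X^{*(i)})$. Since the minimal $\Filt(S_{i+1})$-approximations used to define each mutation depend only on the pair $(S_{i+1}, T)$ and not on the ambient semibrick pair, we have $\X^{(i+1)} \subseteq \X^{*(i+1)}$, and $\X^{(i)}$ is singly left mutation compatible at $S_{i+1}$ because $\X^{*(i)}$ is (Remark~\ref{rem:mutation}). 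Moreover each stage strictly decreases the torsion class $\T^{*(i)}$ associated to $\X^{*(i)}$, since left mutation at an element of $\D^{*(i)}$ corresponds to traversing a cover relation downward in $\tors \Lambda$. Because $\Lambda$ is $\tau$-tilting finite, $\tors \Lambda$ is finite, so this chain of torsion classes must terminate; termination can only occur when the $\D$ part of $\X^{(i)}$ is empty, yielding the semibrick $\U'$ with $\mu^+_{S_k} \circ \cdots \circ \mu^+_{S_1}(\X) = \U'[1]$.

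The main obstacle is this forward direction, with the subtlety that $|\D^{(i)}|$ need not be monotone under mutation: when we mutate at $S_{i+1} \in \D^{(i)}$, any $T \in \U^{(i)}$ whose left approximation $g^+_{S_{i+1}, T}$ is a monomorphism contributes a new nonshifted module $\coker g^+_{S_{i+1}, T}$ to the $\D$ part, which can grow rather than shrink. So termination cannot be read off from a size decrease on $\X$ alone. The remedy is to piggyback on the parallel mutation of the ambient $\X^{*(i)}$, whose associated torsion class \emph{does} strictly decrease in the finite poset $\tors \Lambda$; this forces the procedure to halt, and it halts only when $\D^{(i)} = \emptyset$. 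The hypothesis of $\tau$-tilting finiteness enters exactly at this point.
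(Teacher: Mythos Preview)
Your proof is correct and follows the same overall strategy as the paper: both directions rest on the duality $\mu^-_S\circ\mu^+_S=\Id$ together with the finiteness of $\tors\Lambda$, and the backward direction (mutation compatible $\Rightarrow$ completable) is essentially identical to the paper's.

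Your forward direction is in fact more carefully argued than the paper's. The paper fixes a torsion class $\T$ containing the completion of $\X$ and then takes an \emph{arbitrary} maximal chain $\T\covers\T_1\covers\cdots\covers 0$, asserting that the labeling bricks $S_i$ give a mutation sequence for $\X$. But those $S_i$ lie in $\D(\T_{i-1})$, the $\D$ part of the \emph{full} 2-term simple minded collection at stage $i$, and nothing guarantees they lie in the $\D$ part of the partially mutated $\X$ itself, which is what Definition~\ref{mutation} requires. Your greedy choice of $S_{i+1}$ from the $\D$ part of $\X^{(i)}$, combined with the parallel mutation of the ambient $\X^{*(i)}$ so that the associated torsion class strictly decreases in the finite poset $\tors\Lambda$, is exactly the device that makes this rigorous. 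Your explicit remark that $|\D^{(i)}|$ need not decrease, and that termination must therefore be read off from $\X^{*(i)}$ rather than $\X^{(i)}$, pinpoints the subtlety.
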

\begin{proof}
Suppose that $\X$ is completable.
Theorem~\ref{torsion_to_simple} implies that there is a torsion class $\T$ such that $\U=\U(\T)$ and $\D=\D(\T)$.
Consider any chain of cover relations ending at the zero torion class:
\[\T\covers \T_1 \covers\cdots \covers \T_k = 0.\]
This chain in $\tors \Lambda$ corresponds to a sequence of left-mutations $\mu^+_{S_k}\circ\cdots\circ\mu^+_{S_1}(\X) = \U'[1]$, where $\U'$ is the set of all simple modules for $\Lambda$.
Such a chain exists because $\tors \Lambda$ is finite.

Conversely, suppose that $\X$ is mutation compatible, and there is a sequence of left-mutations which take $\X$ to a semibrick pair $\X'=\U'[1]$.
By the first item of Theorem~\ref{torsion_to_simple}, there is a torsion class $\T'$ whose extending modules are precisely those in the set $\{M: M[1]\in U'[1]\}.$
That is, $\U(\T')=\U'$.
The third item of Theorem~\ref{torsion_to_simple} implies that $\D(\T')\sqcup \U'[1]$ is a 2-term simple minded collection.
Now follow the sequence of right mutations: $$\mu^-_{S_1}\circ\cdots\circ\mu^-_{S_k}(\D(\T')\sqcup \U'[1]).$$
The resulting 2-term simple minded collection contains $\X$.
\end{proof}

\begin{rem}
\label{finite}
\normalfont
Theorem~\ref{thm:mutation} allows us to test the completability of a semibrick pair by performing a series of mutations.
Note that the theorem depends on the fact that $\tors \Lambda$ is finite.
\end{rem}

The following is an immediate consequence of Theorem \ref{thm:mutation}.

\begin{cor}\label{cor:mutation}
    Let $\Lambda$ be $\tau$-tilting finite. Then the following are equivalent.
    \begin{enumerate}
        \item $\Lambda$ has the pairwise 2-simple minded completability property.
        \item Every pairwise completable semibrick pair is mutation compatible.
        \item For all pairwise completable semibrick pairs $\D\sqcup \U[1]$ with $\D \neq \emptyset$ and for all $S \in \D$, the semibrick pair $\mu^+_S(\D\sqcup \U[1])$ is pairwise completable.
    \end{enumerate}
\end{cor}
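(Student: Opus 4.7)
The strategy is to run the cycle $(1) \Rightarrow (2) \Rightarrow (3) \Rightarrow (1)$, using Theorem \ref{thm:mutation} and Remark \ref{rem:mutation} as the primary inputs. The first two implications are essentially immediate; the real content is $(3) \Rightarrow (1)$.

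For $(1) \Leftrightarrow (2)$, I would simply invoke Theorem \ref{thm:mutation}: in the $\tau$-tilting finite setting, completability coincides with mutation compatibility, so (2) is just (1) with ``completable'' replaced by its synonym. For $(2) \Rightarrow (3)$, given a pairwise completable $\D \sqcup \U[1]$ and $S \in \D$, hypothesis (2) makes $\D \sqcup \U[1]$ completable, so by Remark \ref{rem:mutation} the mutation $\mu^+_S(\D \sqcup \U[1])$ is also completable, hence in particular pairwise completable.

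The main work is $(3) \Rightarrow (1)$. Given (3) and a pairwise completable $\X_0 = \D_0 \sqcup \U_0[1]$, the plan is to iteratively apply (3) to construct a sequence
\[\X_0 \xrightarrow{\mu^+_{S_0}} \X_1 \xrightarrow{\mu^+_{S_1}} \X_2 \xrightarrow{\mu^+_{S_2}} \cdots\]
of pairwise completable semibrick pairs, with $S_i \in \D_i$ chosen arbitrarily at each stage. Pairwise completability of $\X_i$ makes each $\mu^+_{S_i}$ well-defined by Remark \ref{rem:mutation}, and (3) guarantees that $\X_{i+1}$ is again pairwise completable, so the iteration can proceed as long as $\D_i \neq \emptyset$. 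If the sequence terminates at some $\X_k$ with $\D_k = \emptyset$, then $\X_0$ is mutation compatible by definition, and Theorem \ref{thm:mutation} yields completability.

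The main obstacle is termination. Since $\Lambda$ is $\tau$-tilting finite, only finitely many bricks, and hence only finitely many semibrick pairs, exist; so the iteration cannot continue indefinitely without repetition. To exclude cycles, the plan is to attach to each $\X_i$ the torsion class $\T_{\U_i}$ provided by Remark \ref{rem:torsion_to_simple} (characterized by $\U_i = \U(\T_{\U_i})$), and to verify, using the mutation formulas in Definition \ref{mutation}, that $\T_{\U_{i+1}} < \T_{\U_i}$ in $\tors \Lambda$. This mirrors the cover-relation interpretation of left mutation for genuine 2-term simple minded collections recorded after Theorem \ref{thm:mutation}: mutating at $S \in \D_i$ moves $S$ to the $\U$-side of the pair, and the resulting descent should again drop $\T_{\U_i}$ by one cover. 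Since $\tors \Lambda$ is finite, the strict descent forces termination, and termination can only occur when no further left mutation applies, i.e.\ when $\D_k = \emptyset$, completing the argument.
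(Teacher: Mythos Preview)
Your proposal is correct and supplies the details that the paper omits (the paper simply declares the corollary ``an immediate consequence of Theorem~\ref{thm:mutation}'' without further argument). The cycle $(1)\Leftrightarrow(2)\Rightarrow(3)$ is indeed routine, and your termination strategy for $(3)\Rightarrow(1)$ via the torsion classes $\T_{\U_i}={}^\perp\U_i$ works: from the mutation formulas one checks that every $T\in\U_i$ lies in $\Filt\Sub(\U_{i+1})$ (it is either in $\U_{i+1}$, or an extension of some $S_T\in\Filt(S)\subseteq\Filt(\U_{i+1})$ by $\ker g\in\U_{i+1}$, or a submodule of such an $S_T$), so $\T_{\U_{i+1}}\subseteq\T_{\U_i}$; and $S\in\T_{\U_i}\setminus\T_{\U_{i+1}}$ gives strictness.

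One small correction: the drop $\T_{\U_i}\to\T_{\U_{i+1}}$ need not be a \emph{cover} relation. The cover interpretation you cite applies when $\D_i\sqcup\U_i[1]$ is a genuine 2-term simple minded collection, which is not assumed here; in particular $S$ need not lie in $\D(\T_{\U_i})$. But you only need strict decrease in the finite poset $\tors\Lambda$ to force termination, and that you have. So replace ``drop $\T_{\U_i}$ by one cover'' with ``strictly decrease $\T_{\U_i}$'' and the argument is complete.
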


Moreover, we have the following.

\begin{cor}\label{cor:mutationcompatiblepassdown}
	Let $\X = \D \sqcup \U[1]$ be mutation compatible. Then for $S \in \D$, the semibrick pair $\mu^+_{S,\X}(\X)$ is mutation compatible. Likewise, for $T \in \U$, the semibrick pair $\mu^-_{T,\X}(\X)$ is mutation compatible.
\end{cor}

\begin{proof}
	If $\X = \D \sqcup \U[1]$ is contained in the 2-simple minded collection $\Y$, then $\mu^+_{S,\X}(\X)$ is contained in the 2-simple minded collection $\mu^+_{S,\Y}(\Y)$. The result for right mutation is completely analogous.
\end{proof}

Although singly left (or right) mutation compatibility is often straightforward to verify, mutation compatibility is in general much more opaque. Even when a semibrick pair consists of a pair of bricks $\X = S \sqcup T[1]$ it is often non-trivial to determine whether $\X$ is mutation compatible. See e.g. \cite[Remark~3.13]{HI_smc}.

\section{Semibrick pairs of rank 3}\label{sec:rank3}
   In this section, we first give an alternative formulation of the pairwise 2-simple minded completability property in terms of semibrick pairs of rank 3. We then consider semibrick pairs of full rank, i.e., for which $|\D| + |\U| = \rk(\Lambda)$. Finally, we use these results to reformulate the pairwise 2-simple minded completability property in terms of wide subcategories.  (We recall the definition of wide subcategories before Remark~\ref{rem:wide} below.)
   
We begin with the following.

\begin{prop}\label{prop:size 3 implies size n}
    Let $\Lambda$ be a $\tau$-tilting finite algebra. Then the following are equivalent.
    \begin{enumerate}
        \item $\Lambda$ has the pairwise 2-simple minded completability property.
        \item Every pairwise completable semibrick pair $\D\sqcup\U[1]$ with $|\D| + |\U| = 3$ is completable.
    \end{enumerate}
\end{prop}

\begin{proof}
    We need only show that (2) implies (1). Let $\D\sqcup\U[1]$ be a pairwise completable semibrick pair and let $S \in \D$. By Corollary \ref{cor:mutation}, it suffices to show that $\D'\sqcup \U'[1] := \mu_S^+(\D\sqcup\U[1])$ is pairwise completable.
    
    Let $S' \in \D'$ and $T' \in \U'$. If $T' = S$, then either there exists $T \in \D$ so that $S'\sqcup T'[1] = \mu^+_S(S\sqcup T)$ or there exists $T \in \U$ so that $S'\sqcup T'[1] = \mu^+_S(S\sqcup T[1])$. In either case, $S\sqcup T$ (resp. $S\sqcup T[1]$) is completable by assumption. Remark \ref{rem:mutation} then implies that $S'\sqcup T'[1]$ is completable.
    
    Otherwise $T' \neq S$ and either there exists $R\in \D$ and $T \in \U$ so that $S'\sqcup T'[1]\sqcup S[1] = \mu^+_S(S\sqcup R\sqcup T[1])$ or there exists $R,T \in \U$ so that $S'\sqcup T'[1]\sqcup S[1] = \mu^+_S(S\sqcup R[1]\sqcup T[1])$. In either case, $S\sqcup R\sqcup T[1]$ (resp. $S\sqcup R[1]\sqcup T[1]$) is completable by the assumption of (2). Remark \ref{rem:mutation} then implies that $S'\sqcup T'[1]$ is completable.
\end{proof}

Before turning to semibrick pairs of full rank, we give an alternative description of the subcategories $\Filt\Fac(\D)$.

\begin{lem}\cite[Lemma 2.2]{BCZ}\label{lem:filtfac}
    Let $\mathcal{I}$ be a class of indecomposable modules.
    \begin{enumerate}
        \item If $\mathcal{I}$ is closed under taking indecomposable direct summands of factors, then $\Filt(\mathcal{I})$ is closed under factors.
        \item If $\mathcal{I}$ is closed under taking indecomposable direct summands of submodules, then $\Filt(\mathcal{I})$ is closed under submodules.
    \end{enumerate}
\end{lem}

\begin{cor}\label{cor:littleFac}
    Let $\mathcal{S}$ be a semibrick.
    \begin{enumerate}
        \item Let $\mathcal{S}^-$ be the class of indecomposable factors of the bricks in $\mathcal{S}$. Then $\Filt\Fac(\mathcal{S}) = \Filt(\mathcal{S}^-)$.
        \item Let $\mathcal{S}_-$ be the class of indecomposable submodules of the bricks in $\mathcal{S}$. Then $\Filt\Sub(\mathcal{S}) = \Filt(\mathcal{S}_-)$.
    \end{enumerate}
\end{cor}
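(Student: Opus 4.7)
The plan is to prove both parts by a two-way containment, and then derive (2) by duality from (1). For part (1), the $\supseteq$ inclusion is immediate: by definition every indecomposable factor of a brick in $\mathcal{S}$ lies in $\Fac(\mathcal{S})$, so $\mathcal{S}^- \subseteq \Fac(\mathcal{S})$, and applying $\Filt$ to both sides gives $\Filt(\mathcal{S}^-) \subseteq \Filt\Fac(\mathcal{S})$.

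The substantive direction is $\Filt\Fac(\mathcal{S}) \subseteq \Filt(\mathcal{S}^-)$, and here the key observation is that $\mathcal{S}^-$ is closed under taking indecomposable direct summands of factors. Indeed, an indecomposable factor of a module in $\mathcal{S}^-$ is an indecomposable factor of a factor of some brick in $\mathcal{S}$, hence itself an indecomposable factor of that brick, hence in $\mathcal{S}^-$. By Lemma \ref{lem:filtfac}(1), this implies $\Filt(\mathcal{S}^-)$ is closed under factors. Since $\mathcal{S} \subseteq \mathcal{S}^- \subseteq \Filt(\mathcal{S}^-)$ and $\Filt(\mathcal{S}^-)$ is closed under extensions (hence direct sums, built via trivial filtrations), every finite direct sum of bricks in $\mathcal{S}$ lies in $\Filt(\mathcal{S}^-)$. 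Any element of $\Fac(\mathcal{S})$ is a factor of such a direct sum and so also lies in $\Filt(\mathcal{S}^-)$. This shows $\Fac(\mathcal{S}) \subseteq \Filt(\mathcal{S}^-)$, and taking $\Filt$ of both sides together with the idempotence of $\Filt$ yields $\Filt\Fac(\mathcal{S}) \subseteq \Filt(\mathcal{S}^-)$.

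Part (2) is entirely dual: $\mathcal{S}_- \subseteq \Sub(\mathcal{S})$ gives the easy inclusion, while closure of $\mathcal{S}_-$ under indecomposable submodules combined with Lemma \ref{lem:filtfac}(2) gives that $\Filt(\mathcal{S}_-)$ is closed under submodules, and the same argument as above (with ``factor'' replaced by ``submodule'') shows $\Sub(\mathcal{S}) \subseteq \Filt(\mathcal{S}_-)$.

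There is no real obstacle here beyond carefully citing Lemma \ref{lem:filtfac} and verifying that $\mathcal{S}^-$ (respectively $\mathcal{S}_-$) satisfies the hypothesis of the lemma; the result is essentially a direct bookkeeping consequence of the closure statements. The only minor point requiring attention is the passage from ``contains the bricks of $\mathcal{S}$'' to ``contains arbitrary finite direct sums of bricks of $\mathcal{S}$,'' which follows from $\Filt(-)$ being closed under extensions.
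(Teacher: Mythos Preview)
Your proof is correct and follows essentially the same approach as the paper: both arguments use Lemma~\ref{lem:filtfac} to conclude that $\Filt(\mathcal{S}^-)$ is closed under factors, then observe that any $X \in \Fac(\mathcal{S})$ is a factor of a finite direct sum $\mathcal{S}^m \in \Filt(\mathcal{S}^-)$ and hence lies in $\Filt(\mathcal{S}^-)$. Your version is somewhat more detailed---you explicitly verify the hypothesis of Lemma~\ref{lem:filtfac} for $\mathcal{S}^-$ and spell out the easy inclusion $\Filt(\mathcal{S}^-) \subseteq \Filt\Fac(\mathcal{S})$, both of which the paper leaves implicit---but the underlying idea is identical.
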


\begin{proof}
    We prove (1) as the proof of (2) is nearly identical. Let $X \in \Fac(\mathcal{S)}$. Then there exists a positive integer $m$ and a epimorphism $\mathcal{S}^m \twoheadrightarrow X$. Now observe that $\mathcal{S}^m \in \Filt(\mathcal{S}^-)$, which is closed under factors by Lemma~\ref{lem:filtfac}. Thus $X \in \Filt(\mathcal{S}^-)$. This proves the result.
\end{proof}

We now use Corollary \ref{cor:littleFac} to prove the following general result, which can also be found in \cite[Lemma 2.7(1)]{asai}. Recall from Remark \ref{rem:torsion_to_simple} that given an arbitrary semibrick pair $\D\sqcup\U[1]$, there exist unique semibricks $\D'$ and $\U'$ so that $\D'\sqcup \U[1]$ and $\D\sqcup \U'[1]$ are 2-term simple minded collections.

\begin{prop}\label{prop:exists surjection}
    Let $\Lambda$ be a $\tau$-tilting finite algebra and let $\X = \D\sqcup\U[1]$ be a semibrick pair.
    \begin{enumerate}
        \item Let $\D'$ be the unique semibrick for which $\D'\sqcup \U[1]$ is a 2-term simple minded collection. Then for every $S \in \D$ there exists $R \in \D'$ which is a quotient of $S$.
        \item Let $\U'$ be the unique semibrick for which $\D \sqcup \U'[1]$ is a 2-term simple minded collection. Then for every $T \in \U$, there exists $U \in \U'$ which is a submodule of $T$.
    \end{enumerate}
\end{prop}

\begin{proof}
    (1) Let $S \in \D$. We first observe that since $\Hom_\Lambda(S,\U) = 0$, we have $S$ is in the torsion class associated with $\D'\sqcup \U[1]$.
    By Theorem~\ref{torsion_to_simple}, this implies $S\in \Filt\Fac(\D')$. By Corollary~\ref{cor:littleFac}, this means there is a filtration
     $$S = S_k \supset S_{k-1} \supset \cdots \supset S_0 = 0$$
    so that each $S_i/S_{i-1}$ is an (indecomposable) factor of some $R_i \in \D'.$ Let $q: R_k \rightarrow S/S_{k-1}$ be the quotient map. We claim that $q$ is an isomorphism and thus $R_k$ is a quotient of $S$. To see this, we apply the functor $\Hom_\Lambda(-,\U)$ to the short exact sequence
    $$\ker q \hookrightarrow R_k \twoheadrightarrow S/S_{k-1}.$$
    This gives the exact sequence
    $$0 = \Hom_\Lambda(R_k,\U) \rightarrow \Hom_\Lambda(\ker q,\U) \rightarrow \Ext_\Lambda^1(S/S_{k-1},\U) \rightarrow \Ext_\Lambda^1(R_k,\U) = 0.$$
    Thus $\Hom_\Lambda(\ker q,\U) = 0$ if and only if $\Ext_\Lambda^1(S/S_{k-1},\U)=0$. To see that this is the case, we apply the functor $\Hom_\Lambda(-,\U)$ to the short exact sequence
    $$S_{k-1} \hookrightarrow S \twoheadrightarrow S/S_{k-1},$$
    which gives the exact sequence
    $$0 = \Hom_\Lambda(S_{k-1},\U) \rightarrow \Ext_\Lambda^1(S/S_{k-1},\U) \rightarrow \Ext_\Lambda^1(S,\U) = 0,$$
    where the first term is zero since $S_{k-1} \in \Filt\Fac(\D')$. This means $\Hom_\Lambda(\ker q,\U) = 0$; however, if $\ker q \subsetneq R_k$, then $\ker q \in \Filt\Sub(\U)$ by the definition of a minimal coextending module. We conclude that $\ker q = 0$, that is, $q$ is an isomorphism.
    
   (2) Let $T \in \U$. We first observe that since $\Hom_\Lambda(\D,T) = 0$, we have $T$ in the torsion free class $(\D', \U'[1])$.
   By Theorem~\ref{torsion_to_simple}, this implies that $T\in \Filt\Sub(\U')$. By Corollary \ref{cor:littleFac}, this means there is a filtration
    $$T = T_k \supset T_{k-1} \supset \cdots \supset T_0 = 0$$
    so that each $T_i/T_{i-1}$ is an (indecomposable) submodule of some $U_i \in \U'$. Let $\iota: U_1 \rightarrow T_1$ be the inclusion map. We claim that $\iota$ is an isomorphism and thus $U_1$ is a submodule of $T$. 
    
    We first show that $\Ext_\Lambda^1(\D,T_{i}) = 0$ for all $i \in \{1,\ldots,k\}$ using a backward induction argument on $i$. We already know this for $i = k$. Thus let $1\leq i < k$ and assume $\Ext_\Lambda^1(\D,T_{i+1}) = 0$. Applying $\Hom_\Lambda(\D,-)$ to the short exact sequence
    $$T_{i} \hookrightarrow T_{i+1} \twoheadrightarrow T_{i+1}/T_{i}$$
    then gives an exact sequence
    $$0 = \Hom_\Lambda(\D,T_{i+1}/T_{i})\rightarrow \Ext_\Lambda^1(\D,T_{i}) \rightarrow \Ext_\Lambda^1(\D,T_{i+1}) = 0,$$
    where the first term is zero since $T_{i+1}/T_i \in \Filt\Sub(\U')$. We conclude that $\Ext_\Lambda^1(\D,T_i) = 0$ for all $i$. In particular, $\Ext_\Lambda^1(\D,T_1) = 0$.
    
    We now apply the functor $\Hom_\Lambda(\D,-)$ to the short exact sequence
    $$T_1 \hookrightarrow U_1 \twoheadrightarrow \coker\iota,$$
    which gives us an exact sequence
    $$0 = \Hom_\Lambda(\D,U_1) \rightarrow \Hom_\Lambda(\D,\coker \iota) \rightarrow \Ext_\Lambda^1(\D,T_1) = 0.$$
    Thus we have $\Hom_\Lambda(\D,\coker\iota) = 0$. However, if $\coker\iota$ is a proper quotient of $U_1$, then $\coker\iota \in \Filt\Fac(\D)$ by the definition of a minimal extending module. We conclude that $\coker\iota = 0$, that is, $\iota$ is an isomorphism.
\end{proof}

As a consequence of Proposition \ref{prop:exists surjection}, we can prove that certain semibrick pairs of full rank are 2-term simple minded collections.

\begin{thm}\label{thm:no common quotients}
    Let $\Lambda$ be a $\tau$-tilting finite algebra and let $\D\sqcup\U[1]$ be a semibrick pair with $|\D| + |\U| = \rk(\Lambda)$.
    \begin{enumerate}
        \item Let $\D'$ be the unique semibrick for which $\D'\sqcup \U[1]$ is a 2-term simple minded collection. For each $S \in \D$, let $\D'(S) = \{R \in \D'|\exists S \twoheadrightarrow R\}$. If $\D'(S)\cap \D'(S') = \emptyset$ for all $S \ncong S' \in \D$, then $\D = \D'$. In particular, $\D \sqcup \U[1]$ is a 2-term simple minded collection.
        \item Let $\U'$ be the unique semibrick for which $\D\sqcup \U'[1]$ is a 2-term simple minded collection. For each $T \in \U$, let $\U'(T) = \{U \in \U'|\exists U \hookrightarrow T\}$. If $\U'(T)\cap \U'(T') = \emptyset$ for all $T \ncong T' \in \U$, then $\U = \U'$. In particular, $\D \sqcup \U[1]$ is a 2-term simple minded collection.
    \end{enumerate}
\end{thm}

\begin{proof}
    We prove only (1) as the proof of (2) is entirely analogous. By assumption, we know $|\D| = |\D'|$ and by Proposition \ref{prop:exists surjection}, we know each $\D'(S)$ is nonempty. Thus the assumption that $\D'(S)\cap \D'(S') = \emptyset$ for all $S \ncong S' \in \D$ implies that every $T \in \D'$ is contained in some $\D'(S)$. We conclude that
    $$\Filt\Fac(\D') \subseteq \Filt\Fac(\D) \subseteq \Filt\Fac(\D')$$
    and therefore $\D = \D'$ by Theorem 
    \ref{torsion_to_simple}(3).
\end{proof}

Note that if $\D$ (resp. $\U$) consists of a single brick then the hypotheses of Theorem \ref{thm:no common quotients}(1) (resp. Theorem \ref{thm:no common quotients}(2)) are satisfied automatically. This implies the following.

\begin{cor}\label{cor:single brick}
    Let $\Lambda$ be a $\tau$-tilting finite algebra and let $\D\sqcup\U[1]$ be a semibrick pair with $|\D| + |\U| = \rk(\Lambda)$. If $|\U| = 1$ or $|\D| = 1$, then $\D\sqcup\U[1]$ is a 2-term simple minded collection.
\end{cor}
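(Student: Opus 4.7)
The plan is to derive this directly as a corollary of Theorem~\ref{thm:no common quotients}. The hypotheses of that theorem involve a disjointness condition on the families $\D'(S)$ (respectively $\U'(T)$) indexed by the bricks of $\D$ (respectively $\U$), and this disjointness condition becomes vacuous as soon as the indexing set has cardinality one. So the corollary should essentially be a one-line deduction once we set things up carefully.

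More concretely, suppose first that $|\D| = 1$, say $\D = \{S\}$. Let $\D'$ be the unique semibrick provided by Remark~\ref{rem:torsion_to_simple} such that $\D' \sqcup \U[1]$ is a 2-term simple minded collection. Then the condition
\[
\D'(S) \cap \D'(S') = \emptyset \quad \text{for all } S \ncong S' \in \D
\]
is vacuously satisfied, since there are no pairs of non-isomorphic bricks in $\D$. Hence Theorem~\ref{thm:no common quotients}(1) applies and yields $\D = \D'$, so that $\D \sqcup \U[1] = \D' \sqcup \U[1]$ is a 2-term simple minded collection. The case $|\U| = 1$ is dual: writing $\U = \{T\}$ and letting $\U'$ be the unique semibrick with $\D \sqcup \U'[1]$ a 2-term simple minded collection, the disjointness hypothesis of Theorem~\ref{thm:no common quotients}(2) is again vacuous, so $\U = \U'$ and the conclusion follows.

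There is really no main obstacle here, since all the work has already been done in Theorem~\ref{thm:no common quotients} and Proposition~\ref{prop:exists surjection}. The only thing to note is that the hypothesis $|\D| + |\U| = \rk(\Lambda)$ is genuinely used, not directly in the corollary's short derivation, but through its invocation inside Theorem~\ref{thm:no common quotients}, where it guarantees $|\D| = |\D'|$ (and $|\U| = |\U'|$) via Proposition~\ref{prop:maximal}(2); this is what forces every element of $\D'$ to lie in the single nonempty set $\D'(S)$ once $|\D|=1$.
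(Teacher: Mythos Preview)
Your proposal is correct and matches the paper's approach exactly: the paper also observes that when $|\D|=1$ (resp.\ $|\U|=1$) the disjointness hypothesis of Theorem~\ref{thm:no common quotients}(1) (resp.\ (2)) is vacuous, and concludes immediately.
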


As a consequence, we obtain our first main result.

\begin{thm}[Theorem~\ref{Main_A}]\label{thm:3 vertex}
    Let $\Lambda$ be a $\tau$-tilting finite algebra with $\rk(\Lambda) \leq 3$. Then $\Lambda$ has the pairwise 2-simple minded completability property.
\end{thm}

\begin{proof}
For $\rk(\Lambda) < 3$, this result is contained in Corollary \ref{cor:2vertex}, so suppose $\rk(\Lambda) = 3$. Then any pairwise completable semibrick pair $\D\sqcup \U[1]$ with $|\D| + |\U| = 3$ is a 2-term simple minded collection by Corollary \ref{cor:single brick}. The result then follows from Proposition \ref{prop:maximal}(1).
\end{proof}

\begin{rem}
Together, Proposition~\ref{prop:size 3 implies size n} and Theorem~\ref{thm:3 vertex} help to explain a pattern in the known counterexamples to the pairwise 2-simple minded completability property. Namely, the counterexamples to the pairwise 2-simple minded completability property in both \cite{HI_smc} and Section~\ref{sec:preprojective} of the present paper come from semibrick pairs $\D\sqcup \U[1]$ satisfying $|\D| + |\U| = 3 < \rk(\Lambda)$.
\end{rem}

We conclude this section by giving an alternative characterization of the pairwise 2-simple minded completability property in terms of \emph{wide subcategories}. Recall that a subcategory $\mathcal{W} \subseteq \mods\Lambda$ is called \emph{wide} if it is closed under extensions, kernels, and cokernels (that is, $\mathcal{W}$ is an exact embedded abelian category). A well-known result of Ringel \cite{ringel} implies that there is a bijection between $\sbrick\Lambda$ and the set of wide subcategories of $\mods\Lambda$ given by $\D \mapsto \Filt(\D)$. Moreover, the semibrick $\D$ consists of the simple objects in $\Filt(\D)$.

It is shown by Jasso \cite{jasso} (see also \cite[Thm. 4.12]{DIRRT}) that the wide subcategory $\mathcal{W} = \Filt(\D)$ is equivalent to $\mods\Lambda_\mathcal{W}$ for some $\tau$-tilting finite algebra $\Lambda_\mathcal{W}$ satisfying $\rk(\mathcal{W}):= \rk(\Lambda_\mathcal{W}) = |\D|.$ (Recall that we have assumed $\Lambda$ to be $\tau$-tilting finite.) This allows us to consider semibrick pairs and 2-term simple minded collections for $\mathcal{W}$.

\begin{rem}\label{rem:wide}
    Let $\mathcal{W} \subseteq \mods\Lambda$ be a wide subcategory and let $\D\sqcup \U[1]$ be a semibrick pair for $\mathcal{W}$. Then $\D\sqcup \U[1]$ is also a semibrick pair for $\mods\Lambda$. Moreover, the property of single left (right) mutation compatibility and the mutation formulas in Definition~\ref{mutation} are agnostic to whether $\D\sqcup \U[1]$ is considered as a semibrick pair for $\mathcal{W}$ or $\mods\Lambda$.
\end{rem}

\begin{rem}
    We recall from Section~\ref{sec:cVect} that each chamber in the wall-and-chamber structure of $\Lambda$ corresponds to some 2-term simple-minded collection. In this interpretation, mutation of 2-term simple-minded collections then corresponds to moving between adjacent chambers by crossing a wall. (See e.g. \cite[Corollary~4.3, Theorem~4.9]{BY} and \cite[Lemma~4.2]{BST}.) Now let $\mathcal{W} \subseteq \mods\Lambda$ a wide subcategory. It is then shown in \cite[Section~4.1]{asai_wall} that the wall-and-chamber structure of $\mods \Lambda_\mathcal{W}$ appears in the neighborhood of some cone in the wall-and-chamber structure of $\mods\Lambda$. (This cone is the positive span of the $g$-vectors which allow for $\mathcal{W}$ to be realized as the $\tau$-perpendicular subcategory of some $\tau$-rigid pair in the sense of \cite{jasso}.) In particular, this implies that mutation in $\mathcal{W}$ is ``governed" by mutation in $\mods\Lambda$. Our Remark~\ref{rem:wide} can be seen as the analogous statement rephrased in terms of semibrick pairs.
\end{rem}

\begin{lem}\label{lem:wide}
    Let $\Lambda$ be $\tau$-tilting finite and let $\D\sqcup\U[1]$ be semibrick pair. Let $\mathcal{W}$ be any wide subcategory of $\mods\Lambda$ containing $\D\sqcup \U[1]$. If $\D\sqcup\U[1]$ is singly left mutation compatible at $S \in \D$, denote $\mu^+_S(\D\sqcup\U[1]):= \D_S \sqcup \U_S[1]$. Then $\mathcal{W}$ contains $\D_S\sqcup \U_S$. Likewise, if $\D\sqcup\U[1]$ is singly right mutation compatible at $t \in \D$, denote $\mu^-_T(\D\sqcup\U[1]):= \D_T \sqcup \U_T[1]$. Then $\mathcal{W}$ contains $\D_T\sqcup \U_T$.
\end{lem}

\begin{proof}
    We observe from Definition \ref{mutation} that the bricks in both $\D_S\sqcup \U_S$ and $\D_T\sqcup \U_T$ can be formed from the bricks in $\D\sqcup \U$ by taking extensions, kernels, and cokernels. The result is then immediate from the definition of a wide subcategory.
\end{proof}

We now note that if $\D\sqcup \U[1]$ is a semibrick pair, then the ``smallest wide subcategory'' containing $\D\sqcup \U$ is a well-defined notion. Indeed, the intersection of arbitrarily many wide subcategories is again a wide subcategory and $\D\sqcup \U \subseteq \mods\Lambda$, which is wide in itself.
    
    \begin{thm}\label{thm:wide}
        Let $\Lambda$ be a $\tau$-tilting finite algebra and let $\D\sqcup \U[1]$ be a semibrick pair with $|\D| + |\U| \leq 3.$ Then the following are equivalent.
        \begin{enumerate}
            \item The smallest wide subcategory containing $\D\sqcup\U$ has rank $|\D| + |\U|$.
            \item $\D \sqcup \U[1]$ is mutation compatible.
        \end{enumerate}
    \end{thm}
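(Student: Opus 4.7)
My plan is to prove the two implications separately. The direction (2) $\Rightarrow$ (1) actually requires no restriction on $|\D|+|\U|$. Since $\D\sqcup\U[1]$ is mutation compatible, a sequence of left mutations $\mu^+_{S_k}\circ\cdots\circ\mu^+_{S_1}$ carries it to some $\U'[1]$; the total count is preserved by each mutation, so $|\U'|=|\D|+|\U|$. By Lemma \ref{lem:wide}, every intermediate semibrick pair lies in the smallest wide subcategory $W$ containing $\D\sqcup\U$, so in particular $\U'\subseteq W$ and hence $\Filt(\U')\subseteq W$. For the reverse inclusion, I would use that right mutation inverts left mutation: applying $\mu^-_{S_1}\circ\cdots\circ\mu^-_{S_k}$ to $\U'[1]$ returns $\D\sqcup\U[1]$, with each intermediate step known to be singly right mutation compatible by the duality between $\mu^+$ and $\mu^-$. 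Invoking Lemma \ref{lem:wide} once more, this time starting with the semibrick pair $\U'[1]$ inside the wide subcategory $\Filt(\U')$, every brick produced by these right mutations stays in $\Filt(\U')$; in particular $\D\sqcup\U\subseteq\Filt(\U')$, which forces $W=\Filt(\U')$ and $\rk(W)=|\U'|=|\D|+|\U|$.

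For (1) $\Rightarrow$ (2), let $W$ be the smallest wide subcategory containing $\D\sqcup\U$. By Jasso's theorem $W\cong\mods\Lambda_W$ with $\rk(\Lambda_W)=|\D|+|\U|\leq 3$, and $\Lambda_W$ is $\tau$-tilting finite since $W$ inherits the finiteness of the brick set from $\mods\Lambda$. Viewing $\D\sqcup\U[1]$ as a semibrick pair in $W$, it satisfies $|\D|+|\U|=\rk(\Lambda_W)$. Because $|\D|+|\U|\leq 3$, at least one of $|\D|, |\U|$ is at most $1$. If one of them equals $1$, Corollary \ref{cor:single brick} directly gives that $\D\sqcup\U[1]$ is a 2-term simple minded collection in $W$. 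If one of them equals $0$, then the other has size $\rk(\Lambda_W)$ and hence must be the complete collection of simples of $W$ (by the remark following Proposition \ref{prop:maximal}), so $\D\sqcup\U[1]$ is the 2-term SMC corresponding to either the zero or the top torsion class in $W$. In either case, Theorem \ref{thm:mutation} applied inside $W$ makes $\D\sqcup\U[1]$ mutation compatible in $W$, and Remark \ref{rem:wide} lifts this to mutation compatibility in $\mods\Lambda$.

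The main obstacle is the direction (1) $\Rightarrow$ (2): one must identify $\D\sqcup\U[1]$ not merely as pairwise completable but as an actual 2-term simple minded collection inside the smaller algebra $\Lambda_W$. The hypothesis $|\D|+|\U|\leq 3$ is exactly what guarantees that one of $|\D|, |\U|$ has size at most $1$, which is precisely the hypothesis of Corollary \ref{cor:single brick}. Any attempt to drop this restriction would require an analogue of Corollary \ref{cor:single brick} for the balanced cases $|\D|, |\U|\geq 2$, which is exactly where pairwise completability can fail in higher rank, as witnessed by Proposition \ref{prop:A4}.
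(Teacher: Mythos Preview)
Your approach is essentially the same as the paper's. For $(1)\Rightarrow(2)$, the paper cites Corollary~\ref{cor:3 vertex} to conclude that $\D\sqcup\U[1]$ is a 2-term simple minded collection in $W$, and then invokes Theorem~\ref{thm:mutation} and Remark~\ref{rem:wide}; your use of Corollary~\ref{cor:single brick} (plus the remark after Proposition~\ref{prop:maximal} for the edge case) is really just unpacking that citation. For $(2)\Rightarrow(1)$, your argument is more explicit than the paper's about the reverse inclusion $\D\sqcup\U\subseteq\Filt(\U')$ via right mutation, but the idea is the same.

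One small gap: in $(2)\Rightarrow(1)$ you assume a sequence of left mutations carrying $\D\sqcup\U[1]$ to some $\U'[1]$, but the definition of mutation compatible does \emph{not} supply such a sequence when $\U=\emptyset$; it simply declares $\D$ mutation compatible outright. The paper handles this case separately by observing that the smallest wide subcategory containing $\D$ is $\Filt(\D)$, which has rank $|\D|$ by Ringel's bijection. You should insert this one-line case before invoking the mutation sequence.
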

    
    \begin{proof}
    First let $\mathcal{W}$ be the smallest wide subcategory containing $\D\sqcup \U$ and suppose $\rk(\mathcal{W}) = |\D| + |\U|$. Since $|\D| + |\U| \leq 3$, Theorem~\ref{thm:3 vertex} implies that $\D\sqcup \U[1]$ is a 2-term simple minded collection for $\mathcal{W}$. By Theorem \ref{thm:mutation} and Remark \ref{rem:wide}, this implies that $\D\sqcup\U[1]$ is mutation compatible.
    
    Now suppose $\D\sqcup \U[1]$ is mutation compatible. If $\U = 0$, then the smallest wide subcategory containing $\D$ is $\Filt(\D)$ and we are done. Otherwise, there exists a semibrick $\U'$ and a sequence of left mutations transforming $\D\sqcup \U[1]$ into $\U'[1]$. By Lemma \ref{lem:wide}, this implies that $\Filt(\U')$ is the smallest wide subcategory containing $\D\sqcup \U$. Since mutation preserves the size of a semibrick pair, this proves the result.
    \end{proof}

By combing Theorem~\ref{thm:wide} with Corollary~\ref{cor:mutation}, we obtain the following.
    
    \begin{cor}
        Let $\Lambda$ be $\tau$-tilting finite. Then the following are equivalent.
        \begin{enumerate}
            \item For every semibrick pair $\D\sqcup\U[1]$ with $|\D| + |\U| = 3$, if for all $\D'\subseteq \D$ and $\U'\subseteq \U$ with $|\D'| + |\U'| = 2$ the smallest wide subcategory containing $\D'\sqcup \U'$ has rank 2, then the smallest wide subcategory containing $\D\sqcup \U$ has rank 3.
            \item $\Lambda$ has the 2-simple minded compatibility property.
        \end{enumerate}
    \end{cor}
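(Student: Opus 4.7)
The plan is to deduce this final corollary directly from the combination of Theorem~\ref{thm:size 3 implies size n} and Theorem~\ref{thm:wide}, which together already give the characterization almost verbatim; the only work is translating between the ``wide subcategory of full rank'' condition and the ``(pairwise) completability'' condition via Theorem~\ref{thm:wide}.

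First I would reduce the hypothesis of (1). Fix a semibrick pair $\D \sqcup \U[1]$ with $|\D|+|\U|=3$ and consider a subpair $\D'\sqcup\U'$ with $|\D'|+|\U'|=2$. If either $\D'$ or $\U'$ is empty, then $\D'\sqcup\U'$ is itself a semibrick of size $2$, so the smallest wide subcategory containing it is $\Filt(\D'\sqcup\U')$, which has the two elements of $\D'\cup\U'$ as its simple objects; hence it automatically has rank $2$. So the hypothesis in (1) has genuine content only when $|\D'|=|\U'|=1$. By Theorem~\ref{thm:wide} applied to the size-$2$ semibrick pair $\D'\sqcup\U'[1]$, the condition ``the smallest wide subcategory containing $\D'\sqcup\U'$ has rank $2$'' is equivalent to ``$\D'\sqcup\U'[1]$ is mutation compatible,'' which by Theorem~\ref{thm:mutation} is the same as ``$\D'\sqcup\U'[1]$ is completable.'' Running over all $S\in\D$ and $T\in\U$, this is exactly the statement that $\D\sqcup\U[1]$ is pairwise completable.

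Next I would rewrite the conclusion of (1). Again by Theorem~\ref{thm:wide} applied to $\D\sqcup\U[1]$ (which has size $3$), the condition ``the smallest wide subcategory containing $\D\sqcup\U$ has rank $3$'' is equivalent to ``$\D\sqcup\U[1]$ is mutation compatible,'' and then by Theorem~\ref{thm:mutation} to ``$\D\sqcup\U[1]$ is completable.'' So condition (1) of the corollary is word-for-word the statement: \emph{every pairwise completable semibrick pair $\D\sqcup\U[1]$ with $|\D|+|\U|=3$ is completable.}

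Finally I would close the argument by invoking Theorem~\ref{thm:size 3 implies size n}, which states precisely that this size-$3$ completability condition is equivalent to $\Lambda$ having the pairwise $2$-simple minded completability property, i.e.\ to condition (2). The only point that needs a moment's care is the empty-subpair case handled in the first paragraph (to confirm that the hypothesis in (1) really matches ``pairwise completable'' and not some strictly stronger condition); beyond that, no new idea is required, and I would not expect any significant obstacle.
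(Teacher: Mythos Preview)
Your proposal is correct and matches the paper's intended argument: the paper states this as ``an immediate corollary'' of Theorem~\ref{thm:wide} (together with Theorem~\ref{thm:size 3 implies size n}), and your translation via Theorem~\ref{thm:wide} and Theorem~\ref{thm:mutation}, including the observation that size-$2$ subpairs with $\D'=\emptyset$ or $\U'=\emptyset$ automatically satisfy the rank-$2$ condition, is exactly what is needed to unpack that claim.
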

    
We conclude this section with one additional observation which will be useful in the sequel.

\begin{thm}\label{thm:pairwise_wide}
    Let $\Lambda$ be a $\tau$-tilting finite algebra which satisfies the pairwise 2-simple minded completability property, and let $\mathcal{W} \subseteq\mods\Lambda$ be a wide subcategory. Then $\mathcal{W}$ has the pairwise 2-simple minded completability property (when considered as a module category in its own right).
\end{thm}

\begin{proof}
    Let $\D\sqcup \U[1]$ be a pairwise completable semibrick pair for $\mathcal{W}$. Then $\D\sqcup \U[1]$ is also a pairwise completable semibrick pair for $\mods\Lambda$ by Remark~\ref{rem:wide}. By assumption, this means $\D\sqcup \U[1]$ is completable as a semibrick pair for $\mods\Lambda$. Now let $\mathcal{V}$ be the smallest wide subcategory of $\mods\Lambda$ containing $\D\sqcup \U$. By construction, we note that $\mathcal{V}$ is also the smallest wide subcategory of $\mathcal{W}$ which contains $\D \sqcup \U$. Theorems~\ref{thm:mutation} and~\ref{thm:wide} then imply that $\D\sqcup \U[1]$ is completable as a semibrick pair for $\mathcal{W}$.
\end{proof}

\section{2-colored noncrossing arc diagrams}\label{sec:arcs}
    In this section, we introduce 2-colored noncrossing arc diagrams. These are adapted from the arc diagrams of \cite{reading_arcs}, and are designed to simultaneously encode the ascents and descents of a permutation in the Weyl group $A_n$.

\subsection{The weak order on $A_n$}
\label{sec:weak_order background}
In this section, we review the weak order on $A_n$.
Recall that the type-A Weyl group of rank $n$ is isomorphic to the symmetric group on the set $[n+1]:=\{1,2,\ldots, n+1\}$.
For the remainder of the paper, we denote this group by $A_n$.

We write $w\in A_{n}$ in its one-line notation as $w= w_1\ldots w_{n+1}$ where $w_i=w(i)$.
For example, we write $213$ for the permutation where $1\mapsto 2$, $2\mapsto 1$ and $3\mapsto3$.
An \emph{inversion} of $w$ is a pair $(p,q)$ satisfying: $q>p$ and $q$ proceeds $p$ in the word $w_1\ldots w_{n+1}$.
The \emph{inversion set} of $w$, denoted $\inv(w)$, is the set of all such pairs $(p,q)$.

The weak order is a partial order on $A_n$ where $w\le v$ if and only if $\inv(w)\subseteq \inv(v)$.
The Hasse diagram for $A_2$ is shown in Figure~\ref{S_3}.
In particular, $w\covered v$ if and only if $\inv(w) \subset \inv(v)$ and $\inv(v) \setminus \inv(w)$ has precisely one element.
This unique inversion is a so called \emph{descent} for $w$.
Descents (defined below) will play an important role in our proof of Theorem~\ref{Main_C}.
\begin{define}
Let $w$ be a permutation in $A_n$.
\begin{enumerate}
\item A \emph{descent} for a permutation $w$ is a pair of positive integers $(p,q)$ such that $p<q$ and there exists $i \in [n]$ with $q=w_{i}$ and $p=w_{i+1}$.
We write $\des(w)$ for the set of all descents of $w$.
\item An \emph{ascent} for $w$ is a pair $(r,s)$ such that $r<s$ and there exists $j \in [n]$ with $r=w_j$ and $s=w_{j+1}$. 
We write $\asc(w)$ for the set of all ascents of $w$.
\end{enumerate}
\end{define}

\begin{rem}\label{rem:ascDes}
Each descent of $w$ corresponds bijectively to an element $v\covered w$.
Each ascent corresponds bijectively with an element $u\covers w$.
Hence $|\des(w)|+|\asc(w)|=n$ for each $w\in A_n$.
\end{rem}
\begin{rem}\label{uniqueness_des_and_asc}
Assume $A \sqcup D$ is a disjoint union of pairs $A, D \subseteq [n+1]\times [n+1]$.
If $|A|+|D|= n$ then there is at most one permutation $w \in A_n$ such that $\asc(w)= A$ and $\des(w)=D$.
\end{rem}

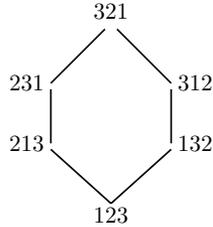
\begin{figure}[h]
  \centering
  \scalebox{.8}{
\begin{tikzpicture}
\draw [black, thick] (0,0) -- (-1,.9);
\draw [black, thick] (0,0) -- (1,.9);
\draw [black, thick] (1,1) -- (1,1.9);
\draw [black, thick] (-1,1) -- (-1,1.9);
\draw [black, thick] (1,2) -- (.1,2.9);
\draw [black, thick] (-1,2) -- (-.1,2.9);
\node at (0,-.2) {$123$};
\node at (1.4,1) {$132$};
\node at (-1.4,1) {$213$};
\node at (1.4,2) {$312$};
\node at (-1.4,2) {$231$};
\node at (0,3.2) {$321$};
\end{tikzpicture}
}
 \caption{The weak order on $A_2$.}
        \label{S_3}
\end{figure}

\subsection{Noncrossing arc diagrams}
\label{sec:arc-diagrams}
In this section, we describe a combinatorial model for the permutations in $A_n$ using green (resp. red) noncrossing arc diagrams.

\begin{rem}
We adapt red and green arc diagrams from the noncrossing arc diagrams first defined in \cite{reading_arcs}.
Our definition for a green noncrossing diagram below coincides with the noncrossing arc diagrams defined in that paper, except that the arcs in our diagram are oriented.
\end{rem}

\begin{define}\label{def:red_arc}
Consider $n+1$ nodes arranged in a vertical column and labeled by the numbers $1, 2, \ldots, n+1$ in increasing order from bottom to top. An \emph{arc} (on $n+1$ nodes) is a directed curve with distinct endpoints in $[n+1]$ which travels monotonically upward or downward and only intersects the $n+1$ labeled nodes at its endpoints.
A \emph{green arc} $\alpha$ travels monotonically \emph{downward} from its top endpoint, $\src(\alpha)$, to its bottom endpoint, $\tar(\alpha)$.
A \emph{red arc} $\beta$ travels monotonically \emph{upward} from its bottom endpoint, $\src(\beta)$, to its top endpoint, $\tar(\beta)$.
For each node between its endpoints, a given arc passes either to the left or to the right.
We consider each arc only up to combinatorial equivalence.
That is, an arc is characterized by its color, its endpoints, and on which side the arc passes each node (either to the left or to the right).
\end{define}

Examples of arcs are shown in Figure \ref{fig: arcs for A4}. These arcs can be considered as green by orienting them downward and can be considered as red by orienting them upward.

\begin{rem}\label{rem:redgreen}
The terms \emph{green} and \emph{red} are chosen to agree with the standard nomenclature for maximal green sequences (see e.g. \cite{keller}). Indeed, in Section \ref{sec:arcs_to_bricks}, we will relate arcs (of arbitrary color) to bricks in the preprojective algebra. Under this correspondence, given a 2-term simple minded collection $\D\sqcup \U[1]$, we wish to visualize the bricks in $\D$ as a set of ``red'' arcs and $\U$ as a set of ``green'' arcs. 
For $S \in \D$, the left mutation $\mu^+_S$ is sometimes called a \emph{green-to-red} or \emph{reddening} mutation. 
Observe that left mutation take $S$ to $S[1] \in \mu^+_S(\D\sqcup \U[1])$, and thus the arc corresponding to $S$ should change from ``green'' to ``red''.

The upshot of this is that ``green'' and ``red'' can be considered as abstract properties of arcs rather than as colors. Thus to make this document more accessible, we will typically draw green arcs as solid blue and red arcs as dashed orange. Arrows on these arcs (see e.g. Figure \ref{fig:2-coloredDiagram}) indicate whether each arc travels monotonically upward or monotonically downward. When the color of an arc is not relevant, we generally draw it in black and with no arrows. (see e.g. Figure \ref{fig: left and right position}). 
\end{rem}

\begin{define}\label{nc_diagram}
A \emph{green (resp. red) noncrossing arc diagram} (on $n+1$ nodes) is a (possibly empty) set of green (resp. red) arcs (on $n+1$ nodes) which can be drawn so that each pair of arcs satisfy the following compatibility conditions:
\begin{enumerate}[label={(C\arabic*)}]
\item\label{C1} $\alpha$ and $\beta$ do not share a bottom endpoint or a top endpoint;
\item\label{C2} $\alpha$ and $\beta$ do not cross in their interiors.
\end{enumerate}
\end{define}
We note that two arcs may share an endpoint, but it must be the top endpoint of one and the bottom endpoint of the other.
For example, the four diagrams shown in Figure \ref{fig: left and right position} can be considered as green noncrossing arc diagrams by orienting the arcs downward.

We now describe a map $\delta$ from the set of permutations in $A_n$ to the set of green noncrossing arc diagrams on $n+1$ nodes.
Given $w=w_1\ldots w_{n+1}$, we plot the point $(i,w_i)$ in $\mathbb{R}^2$.
We connect $(i, w_i)$ to $(i+1, w_{i+1})$ with a straight line segment whenever $w_i>w_{i+1}$.
(That is, whenever the pair $w_i$ and $w_{i+1}$ are a descent.)
Finally, we move all of the points into a vertical line, bending the straight line segments so that they become the arcs in our diagram. 
See Figure~\ref{fig:delta}, or see \cite[Figure 4]{reading_arcs} for a larger example.

\begin{center}
\begin{figure}
\begin{tikzpicture}

\draw[thick,color=orange](-1,2)--(-0.5,0);
\draw[thick,color=orange](0,1)--(0.5,-2);

\node[draw,fill=white] at (-1,2) {5};
\node[draw,fill=white] at (-0.5,0) {3};
\node[draw,fill=white] at (0,1) {4};
\node[draw,fill=white] at (0.5,-2) {1};
\node[draw,fill=white] at (1,-1) {2};

\draw[thick,->] (2,0) -- (4,0) node[midway,above] {\text{squish}};

\begin{scope}[shift = {(6,0)}]


\begin{scope}[decoration={markings,mark = at position 0.15 with {\arrow{<}},mark=at position 0.5 with {\arrow{<}},mark=at position 0.85 with {\arrow{<}}}]
\draw[thick,orange,smooth,postaction=decorate](0,-2) to [out = 150, in = -150] (0,-0.5) to [out = 30, in = -30] (0,1);
\end{scope}

\begin{scope}[decoration={markings,mark=at position 0.25 with {\arrow{<}},mark=at position 0.75 with {\arrow{<}}}]
\draw[thick,orange,smooth,postaction=decorate](0,0) to [out = 150, in = -150] (0,2);
\end{scope}

\begin{scope}[decoration={markings,mark=at position 0.5 with {\arrow{>}}}]
\end{scope}

\draw[fill] (0,1) circle [radius=.07];
\draw[fill] (0,-2) circle [radius=.07];
\draw[fill] (0,0) circle [radius=.07];
\draw[fill] (0,2) circle [radius=.07];
\draw[fill] (0,-1) circle [radius=.07];

\end{scope}
\end{tikzpicture}
\caption{An example of the bijection $\delta$.}\label{fig:delta}
\end{figure}
\end{center}

\begin{thm}\cite[Theorem~3.1]{reading_arcs}
The map $\delta$ from the set of permutations in $A_n$ to the set of noncrossing green arc diagrams on $n+1$ nodes is a bijection.
\end{thm}

By switching each instance of the word ``descent'' with ``ascent'' in the paragraph above, and connecting the points $(i,w_i)$ to $(i+1,w_{i+1})$ whenever $w_i<w_{i+1}$, we immediately obtain the following corollary.

\begin{cor}
There is a bijection which we denote $\bar{\delta}$ from the set of permutations in $A_n$ to the set of noncrossing red arc diagrams on $n+1$ nodes which sends the ascents of a permutation $w$ to a set of compatible red arcs.
\end{cor}

\subsection{2-colored noncrossing arc diagrams}
We now wish to extend our combinatorial model to simultaneously encode both the descents and ascents of a permutation. To do so, we introduce \emph{2-colored noncrossing arc diagrams}, which consist of a green noncrossing arc diagram and a red noncrossing arc diagram satisfying some compatibility condition. In order to formulate this condition, we first need the following definitions.

\begin{define}\label{def:support and sub}
Let $\alpha$  and $\beta$ be arcs on $n+1$ nodes.
\begin{enumerate}
    \item The \emph{support} of $\alpha$, written $\supp(\alpha)$, is the set the set of nodes between (and including) the endpoints of $\alpha$.
We write $\suppo(\alpha)$ for the set of nodes strictly between (i.e. not including) $\alpha$'s endpoints.
    \item We say that $\alpha$ has full support if its bottom endpoint is 1 and its top endpoint is $n+1$.
    \item We say that \emph{the supports of $\alpha$ and $\beta$ overlap} provided that $|\supp(\alpha)\cap \supp(\beta)|>1$.
\end{enumerate}
\end{define}

Examples of arcs with full support are shown in Figure \ref{fig: arcs for A4}.

\begin{figure}[h]
\centering
\scalebox{.8}{
\begin{tikzpicture}
\draw [black] (0,-3) to (0,3);
\draw [black] (-4,-3) to (-4,3);
\draw [black] (4,-3) to (4,3);
\draw[black] (-7,0) to (7,0);


\node at (-5.5,1.7) {\scalebox{1.45}{\includegraphics{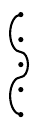}}};

\node at (-2,1.7) {\scalebox{1.45}{\includegraphics{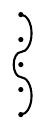}}};

\node at (2,1.7) {\scalebox{1.45}{\includegraphics{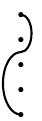}}};

\node at (5.5,1.7) {\scalebox{1.45}{\includegraphics{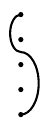}}};


\node at (-5.5,-1.5) {\scalebox{1.45}{\includegraphics{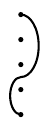}}};

\node at (-2,-1.5) {\scalebox{1.45}{\includegraphics{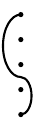}}};

\node at (1.68,-1.5) {\scalebox{1.45}{\includegraphics{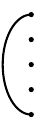}}};

\node at (5.8,-1.5) {\scalebox{1.45}{\includegraphics{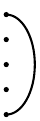}}};
\end{tikzpicture}}
\caption{The arcs with full support on $5$ nodes.}
\label{fig: arcs for A4}
\end{figure}

\begin{define}\label{def:leftof}
Let $\alpha$ and $\beta$ be two arcs which do not cross and do not have the same support, and suppose $|\supp(\alpha)\cap \supp(\beta)| > 1$.
We say that $\beta$ is \emph{left} of $\alpha$ provided that 
\begin{enumerate}
\item If $i\in \src(\beta)\cup \tar(\beta)$ and $i\in \suppo(\alpha)$, then $i$ is on the left side of
$\alpha$.
\item If $i\in \src(\alpha)\cup \tar(\alpha)$ and $i\in \suppo(\beta)$ then $i$ is on the right side of $\beta$.
\end{enumerate}
Whenever $\beta$ is left of $\alpha$, we may equivalently say that $\alpha$ is \emph{right} of $\beta$.
\end{define}

\begin{rem}\label{rmk:left_for_proof}
Note that if $\beta$ is left of $\alpha$, the fact that $\alpha$ and $\beta$ do not cross implies that 
if $i\in \suppo(\alpha)\cap\suppo(\beta)$ and $i$ is left of $\beta$, then $i$ is also left of $\alpha$.
Similarly if $i\in \suppo(\alpha)\cap\suppo(\beta)$ and $i$ is right of $\alpha$, then $i$ is also right of $\beta$.
\end{rem}

Examples of arcs which are left of one another are shown in Figure \ref{fig: left and right position}. We are now ready to construct our combinatorial model.

 \begin{figure}[h]
 \centering
\scalebox{.65}{
\begin{tikzpicture}

\draw[fill] (-5,1) circle [radius=.07];
\draw[fill] (-5,-2) circle [radius=.07];
\draw[fill] (-5,0) circle [radius=.07];
\draw[fill] (-5,2) circle [radius=.07];
\draw[fill] (-5,-1) circle [radius=.07];
\draw [black, thick] (-5,-2) to [out= 150, in =-150 ] (-5,2);
\draw [black, ultra thick, dashed] (-5,-1) to [out= 30, in =-30 ] (-5,1);

\draw[fill] (-2,1) circle [radius=.07];
\draw[fill] (-2,-2) circle [radius=.07];
\draw[fill] (-2,0) circle [radius=.07];
\draw[fill] (-2,2) circle [radius=.07];
\draw[fill] (-2,-1) circle [radius=.07];
\draw [black, thick] (-2,-2) to [out= 150, in =-150 ] (-2,2);
\draw [black, ultra thick, dashed] (-2,-1) to [out= 150, in =-150 ] (-2,1);

\draw[fill] (0,1) circle [radius=.07];
\draw[fill] (0,-2) circle [radius=.07];
\draw[fill] (0,0) circle [radius=.07];
\draw[fill] (0,2) circle [radius=.07];
\draw[fill] (0,-1) circle [radius=.07];

\draw [black, thick] (0,-2) to [out= 150, in =210 ] (0,-.5);
\draw [black, thick] (0,-.5) to [out = 30, in = -30] (0,.5);
\draw [black, thick] (0,.5) to [out= 150, in =-150 ] (0,2);
\draw [black, ultra thick, dashed] (0,-1) to [out= 30, in =-30 ] (0,1);

\draw[fill] (2,1) circle [radius=.07];
\draw[fill] (2,-2) circle [radius=.07];
\draw[fill] (2,0) circle [radius=.07];
\draw[fill] (2,2) circle [radius=.07];
\draw[fill] (2,-1) circle [radius=.07];

\draw [black, thick] (2,-1) to [out= 150, in =210 ] (2,.5);
\draw [black, thick] (2, .5) to [out= 30, in =-30 ] (2,2);
\draw [black, ultra thick, dashed] (2,-2) to [out= 30, in =-30 ] (2,0);

\draw[fill] (4.5,1) circle [radius=.07];
\draw[fill] (4.5,-2) circle [radius=.07];
\draw[fill] (4.5,0) circle [radius=.07];
\draw[fill] (4.5,2) circle [radius=.07];
\draw[fill] (4.5,-1) circle [radius=.07];

\draw [black, thick] (4.5,-2) to [out= 150, in =-150 ] (4.5,0);
\draw [black, ultra thick, dashed] (4.5,-1) to [out= 30, in =-30 ] (4.5,2);

\end{tikzpicture}
}
\caption{In each diagram, the solid arc is left of the dashed arc. }
\label{fig: left and right position}
\end{figure}
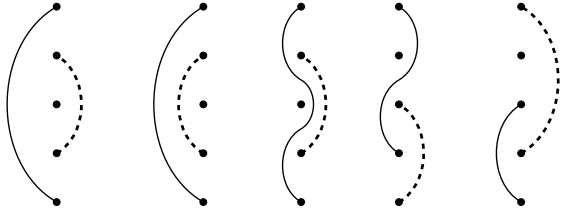

\begin{define}\label{def:2-colored}
    Let $\mathcal{G}$ be a green noncrossing arc diagram on $n+1$ nodes and let $\mathcal{R}$ be a red noncrossing arc diagram on $n+1$ nodes.
    \begin{enumerate}
        \item We say that $(\mathcal{G},\mathcal{R})$ is a \emph{2-colored noncrossing arc diagram} if for all $\alpha \in \mathcal{G}$ and $\beta \in \mathcal{R}$, we have that $\supp(\alpha) \neq \supp(\beta)$ and we can draw $\alpha$ and $\beta$ together so that:
       \begin{enumerate}[label={(TC\arabic*)}]
        \item\label{TC1} $\alpha$ and $\beta$ do not cross in their interiors.
        \item\label{TC2} $\src(\alpha) \neq \src(\beta)$ and $\tar(\alpha) \neq \tar(\beta)$.
        
        \item\label{TC3} If $\tar(\alpha) = \src(\beta)$, then $\alpha$ is left of $\beta$.
        \item\label{TC4} If $\tar(\beta) = \src(\alpha)$, then $\beta$ is left of $\alpha$.
    \end{enumerate}
        \item If $(\mathcal{G},\mathcal{R})$ is a 2-colored noncrossing arc diagram on $n+1$ nodes and there exists a permutation $w \in A_n$ so that $\mathcal{G} \subseteq \delta(w)$ and $\mathcal{R} \subseteq \overline{\delta}(w)$, we say that $(\mathcal{G},\mathcal{R})$ is \emph{completable}. When these containments are both equalities, we say that $(\mathcal{G},\mathcal{R})$ is \emph{complete}.
    \end{enumerate}
\end{define}

\begin{rem}
    It does not follow \emph{a priori} that every completable 2-colored noncrossing arc diagram is a subset of a complete 2-colored noncrossing arc diagram. Indeed, for $w \in A_n$, we know that $\delta(w)$ (resp. $\overline{\delta}(w)$) is a green (resp. red) noncrossing arc diagram, but not that the pair $(\delta(w),\overline{\delta}(w))$ is a 2-colored noncrossing arc diagram. (The axioms~\ref{TC1}-\ref{TC4} do not follow immediately.) We will show that this is in fact the case in Corollary~\ref{cor:miz_sec}.
\end{rem}

\begin{ex}\label{ex:arcDiagramNotComplete}
 In Figure~\ref{fig:2-coloredDiagram}, from left to right we draw a 2-colored noncrossing arc diagram that is not completable, and two complete 2-colored noncrossing arc diagrams.
To see that the leftmost diagram is not completable, we note that the only possible permutations 
$w \in A_n$ for which $\delta(w)$ and $\overline{\delta}(w)$ could contain the arcs in this diagram are 31452 and 14523. (These are the only permutations which have both (1,4) and (4,5) as ascents and (2,5) as a descent.) 
However, the red arc between the nodes 1 and 4 in $\overline{\delta}(31452)$ passes to the right of the node 3. Likewise, the green arc from 5 to 2 in $\delta(14523)$ passes to the left of 3.
As neither such arc appears in the left diagram, we conclude that it is not completable.
\end{ex}

 \begin{figure}[h]
 \centering
\begin{tikzpicture}

\begin{scope}[decoration={markings,mark=at position 0.25 with {\arrow{<}},mark=at position 0.75 with {\arrow{<}}}]
\draw[very thick,blue,smooth,postaction=decorate](0,-1) to [out = 30, in = -30] (0,2);
\end{scope}

\begin{scope}[decoration={markings,mark=at position 0.25 with {\arrow{>}},mark=at position 0.75 with {\arrow{>}}}]
\draw[very thick,orange,dashed,smooth,postaction=decorate](0,-2) to [out = 150, in = -150] (0,1);
\end{scope}

\begin{scope}[decoration={markings,mark=at position 0.5 with {\arrow{>}}}]
\draw[very thick,orange,dashed,smooth,postaction=decorate] (0,1) to (0,2);
\end{scope}

\draw[fill] (0,1) circle [radius=.07];
\draw[fill] (0,-2) circle [radius=.07];
\draw[fill] (0,0) circle [radius=.07];
\draw[fill] (0,2) circle [radius=.07];
\draw[fill] (0,-1) circle [radius=.07];


\begin{scope}[decoration={markings,mark=at position 0.25 with {\arrow{>}},mark=at position 0.75 with {\arrow{>}}}]
\draw[very thick,blue,smooth,postaction=decorate](4,2) to [out = -30, in = 90] (5,0.5) to [out=-90, in = 30] (4,-1);
\draw[very thick,blue,smooth,postaction=decorate](4,0) to [out=-150, in = 90] (3.25,-1) to [out = -90, in = 150] (4, -2);
\end{scope}

\begin{scope}[decoration={markings,mark=at position 0.5 with {\arrow{>}}}]
\draw[very thick,orange,dashed,smooth,postaction=decorate] (4,1) to (4,2);
\end{scope}

\begin{scope}[decoration={markings,mark = at position 0.15 with {\arrow{>}},mark=at position 0.5 with {\arrow{>}},mark=at position 0.85 with {\arrow{>}}}]
\draw[very thick,orange,dashed,smooth,postaction=decorate](4,-2) to [out = 120, in = -150] (4,-0.5) to [out = 30, in = -30] (4,1);
\end{scope}

\draw[fill] (4,1) circle [radius=.07];
\draw[fill] (4,-2) circle [radius=.07];
\draw[fill] (4,0) circle [radius=.07];
\draw[fill] (4,2) circle [radius=.07];
\draw[fill] (4,-1) circle [radius=.07];


\begin{scope}[decoration={markings,mark=at position 0.25 with {\arrow{>}},mark=at position 0.75 with {\arrow{>}}}]
\draw[very thick,orange,dashed,smooth,postaction=decorate](8,-2) to [out = 150, in = -90] (7,-0.5) to [out=90, in = -150] (8,1);
\end{scope}

\begin{scope}[decoration={markings,mark=at position 0.5 with {\arrow{>}}}]
\draw[very thick,orange,dashed,smooth,postaction=decorate] (8,1) to (8,2);
\draw[very thick,orange,dashed,smooth,postaction=decorate] (8,-1) to (8,0);
\end{scope}

\begin{scope}[decoration={markings,mark = at position 0.15 with {\arrow{<}},mark=at position 0.5 with {\arrow{<}},mark=at position 0.85 with {\arrow{<}}}]
\draw[very thick,blue,smooth,postaction=decorate](8,-1) to [out = 150, in = -150] (8,0.5) to [out = 30, in = -30] (8,2);
\end{scope}

\draw[fill] (8,1) circle [radius=.07];
\draw[fill] (8,-2) circle [radius=.07];
\draw[fill] (8,0) circle [radius=.07];
\draw[fill] (8,2) circle [radius=.07];
\draw[fill] (8,-1) circle [radius=.07];

\end{tikzpicture}
\caption{(left) A 2-colored noncrossing arc diagram which is not completable. (middle) The complete 2-colored noncrossing arc diagram corresponding to the permutation 31452. (right) The complete 2-colored noncrossing arc diagram corresponding to the permutation 14523.}
\label{fig:2-coloredDiagram}
\end{figure} 

\begin{rem}\label{rem:whyDef}
    The definition of a 2-colored noncrossing arc diagram is motivated by representation theory. Indeed, we will show in Theorem \ref{Main_B} that 2-colored noncrossing arc diagrams correspond to semibrick pairs over preprojective algebras of type $A$ (Definition~\ref{def:preproj}) and that a semibrick pair is completable (resp. is a 2-term simple minded collection) if and only if the corresponding 2-colored noncrossing arc diagram is completable (resp. is complete). In particular, this will imply that for any $w \in A_n$, the pair $(\delta(w),\overline{\delta}(w))$ is a (complete) 2-colored noncrossing arc diagram.
    Below we sketch a combinatorial proof of this fact that is similar to the description of $\delta$ and $\overline{\delta}$.
    
    Given $w\in A_n$, we graph the points $(i, w_i)$.
    Then we connect consecutive points $(i, w_i)$ and $(i+1, w_{i+1})$ with a straight line segment that is oriented from left to right.
    Finally, we ``squish'' the entire graph into a vertical column.
    Now it is clear, regardless of color, saying that $\tar(\alpha) = \src(\beta)$ simply means that $\alpha$ precedes $\beta$ and is adjacent to it, hence $\alpha$ must pass to the left of $\beta$.
    
\end{rem}

\begin{rem}
    In the recent paper \cite{Miz2020}, Mizuno independently introduces so-called ``double arc diagrams''. We will show in Section \ref{sec:ThmD} that these ``double arc diagrams'' are precisely the complete 2-colored noncrossing arc diagrams. See Remark \ref{rem:miz_sec} for further discussion.
\end{rem}

We now show that complete 2-colored noncrossing arc diagrams are characterized precisely as those containing the maximum possible number of arcs.

\begin{thm}\label{thm:fullSizeComplete}
    Let $(\mathcal{G},\mathcal{R})$ be a 2-colored noncrossing arc diagram on $n+1$ nodes. Then $(\mathcal{G},\mathcal{R})$ is complete if and only if $|\mathcal{G}| + |\mathcal{R}| = n.$
\end{thm}

Before we jump into the proof of Theorem~\ref{thm:fullSizeComplete}, we will need the following technical lemma.

\begin{lem}\label{lem:trans_of_left}
Suppose that $\alpha_1, \alpha_2, \ldots, \alpha_k$ belong to a 2-colored arc diagram, and satisfy $\alpha_i$ is left of $\alpha_{i+1}$ for each $i=1,2,\ldots k-1$, where $k\ge 3$.
If $|\supp(\alpha_1)\cap \supp(\alpha_k)|>1$ then $\alpha_1$ is left of~$\alpha_k$.
\end{lem}
\begin{proof}
We proceed by induction.
First let $k=3$.
Note that since $\alpha_1$ and $\alpha_3$ do not cross (as they form a 2-colored arc diagram), it is enough to show either the first or second condition in Definition~\ref{def:leftof}.
Set $M$ equal to the smallest top endpoint among $\alpha_1$, $\alpha_2$, and $\alpha_3$.
Start by assuming that $M$ is the top endpoint of $\alpha_1$.
Our assumption that $|\supp(\alpha_1)\cap \supp(\alpha_i)|>1$ implies that $M\in \supp(\alpha_i)$, for $i=2,3$.
Observe that $M$ is not also the top endpoint of $\alpha_3$ (otherwise the top endpoint of $\alpha_2$ must be between $\alpha_1$ and $\alpha_3$, and hence be smaller than $M$).
Thus $M\in \suppo(\alpha_3)$.
If $M$ is also in $\suppo(\alpha_2)$, then since $\alpha_1$ is left of $\alpha_2$ it follows that $M$ is left of $\alpha_2$.
Remark~\ref{rmk:left_for_proof} says that $M$ is also left of $\alpha_3$, and we are done.
If $M$ is also a top endpoint of $\alpha_2$, then $\alpha_2$ is left of $\alpha_3$, it follows that $M$ is left of $\alpha_3$, and we are done.
A symmetric argument (using ``right'' instead of ``left'') also proves the case when $M$ is the top endpoint of $\alpha_3$ (but a top endpoint of $\alpha_1$).

If $M$ is a top endpoint of $\alpha_2$, but not of either $\alpha_1$ or $\alpha_3$, then it follows that $M$ is right of $\alpha_2$ and left of $\alpha_3$.
Since $\alpha_1$ and $\alpha_3$ do not cross, we must have that $\alpha_1$ is left of $\alpha_3$.

Now suppose that $k>3$.
Let $j$ be the largest index such that $2\le j<k$ and $|\supp(\alpha_1)\cap \supp(\alpha_j)|>1$.
By induction $\alpha_1$ is left of $\alpha_j$.
If we can show that $|\supp(\alpha_j)\cap \supp(\alpha_k)|>1$, then we also get $\alpha_j$ is left of $\alpha_k$ by induction, and we can complete the proof using the same argument as in the base case.
Assume that $|\supp(\alpha_j)\cap \supp(\alpha_k)|\le 1$.
Write $[m_j, M_j]$ for $\supp(\alpha_j)\cap \supp(\alpha_1)$, and $[m_k, M_k]$ for $\supp(\alpha_k)\cap \supp(\alpha_1)$.
Either $M_j<m_k$ or $M_k<m_j$.
Either $M_j<m_k$ or $M_k<m_j$.
We write the proof for the case where $M_j < m_k$ (the other case is the same).
Since $\alpha_j$ is the last arc before $\alpha_k$ whose support overlaps $\alpha_1$, it must be the case that $m_j$ is the bottom endpoint of $\alpha_1$, and the top endpoint for the next arc, $\alpha_{j+1}$ is less than or equal to $m_j$.
In fact, the top endpoint of each arc after $\alpha_j$ is less than or equal to $m_j$ (and at most two arcs are incident to $m_j$).
But in that case, $\supp(\alpha_{k-1})\cap \supp(\alpha_k)=\emptyset$, which contradicts our assumption that $\alpha_{k-1}$ is left of $\alpha_k$.
By this contradiction we conclude that $\supp(\alpha_j)\cap \supp(\alpha_k)\neq \emptyset$, thus completing the proof.

\end{proof}

\begin{proof}[Proof of Theorem~\ref{thm:fullSizeComplete}]
The ``only if'' part follows immediately from Remark \ref{rem:ascDes}, so we suppose that $|\mathcal{G}| + |\mathcal{R}| = n$. Consider the set of arcs in $\mathcal{G}\cup\mathcal{R}$ as a directed graph on $[n+1]$. Note that by~\ref{TC1}, each arc in $\mathcal{G}\cup\mathcal{R}$ is either red or green; i.e., it is contained in only one of $\mathcal{G}$ and $\mathcal{R}$.
In each connected component of this graph, order adjacent arcs so that $\alpha\prec \beta$ provided that $\tar(\alpha)=\src(\beta)$.

We claim that each connected component is a tree, so that the transitive closure of the relation described above is a chain.
By way of contradiction assume there is a connected component that contains a directed cycle, and write this cycle as $v_1\alpha_1v_2 \alpha_2 \ldots \alpha_k v_{k+1}=v_1$.
(An undirected cycle would violate either~\ref{C1} or~\ref{TC2}.)
Note this cycle is actually equal to the connected component, because each vertex has degree at most 2.
(If there were a vertex with degree 3 or more, then two red (or two green) arcs would violate~\ref{C1}).
Also, the cycle must contain both red and green arcs because arcs of a single color travel monotonically downward or monotonically upward.

We may choose $v_1$ so that it is the largest vertex in the cycle, and thus $\alpha_1$ is a green arc.
Let $i$ be the smallest positive integer such that $\alpha_i$ is a red arc.
Then \ref{TC3} implies that $\alpha_{i}$ passes to the right of $\alpha_{i-1}$.
Also, $\alpha_i$ does not cross $\alpha_1, \alpha_2, \ldots, \alpha_{i-1}$ in their interiors by~\ref{C1} and~\ref{TC1}.
In particular, if $\supp(\alpha_{i-2})\cap \supp(\alpha_i)\neq \emptyset$ then $\tar(\alpha_{i-2})=\src(\alpha_{i-1})$ is in $\suppo(\alpha_i)$.
Since $\alpha_i$ is right of $\alpha_{i-1}$, then we must have $\tar(\alpha_{i-2})=\src(\alpha_{i-1})$ is on the left side of $\alpha_i$.
Thus, $\alpha_i$ is also right of $\alpha_{i-1}$.
Continuing in this way, we see that $\alpha_i$ is right of each of the green arcs $\alpha_1, \alpha_2, \ldots, \alpha_{i-1}$.

By Lemma~\ref{lem:trans_of_left}, we see that each time we transition from a sequence of green arcs to red arcs, or red arcs to green arcs, the next arc is \emph{right} of the previous ones.

Since $v_1$ was chosen to be largest, the last arc, $\alpha_k$ must be a red arc.
Since $\tar(\alpha_k) = \src(\alpha_1)$, their supports must overlap.
The previous paragraph implies that $\alpha_k$ is right of $\alpha_1$.
However,~\ref{TC4} implies that $\alpha_k$ is also \emph{left} of $\alpha_1$, and this is a contradiction.
By this contradiction, we conclude that each connected component is a tree.

Next we claim that the directed graph consisting of all of the arcs in $\mathcal{G}\cup\mathcal{R}$ is connected.
Assume there are $l\ge1$ connected components, and write $k_i$ for the number vertices in the $i$-th connected component.
Then $n+1= k_1+k_2+\cdots + k_l$.
Each tree with $k_i$ vertices has $k_i-1$ edges/arcs.
Therefore:
\begin{eqnarray*}
(k_1-1)+(k_2-1)+\cdots + (k_l-1)
&=&(k_1+k_2+\cdots +k_l) - l\\
&=&(n+1) -l.
\end{eqnarray*}
Since we know there are $n$ arcs, we must have $l=1$.
This proves our second claim.

Finally, we observe that the ordering $\alpha\prec\beta$ induces a total order on $[n+1]$ where $\src(\alpha)\prec\tar(\alpha)=\src(\beta)\prec\tar(\beta)$.
Let $w$ be the resulting permutation, so that for each $i$, $w_i$ is the $i$-th smallest element of $[n+1]$ under $\prec$.
A pair $(i,j)$ is an descent of $w$ if and only if there is green arc $\alpha \in \mathcal{G}$ with $i= \src(\alpha)$ and $j=\tar(\alpha)$.
A pair $(i,j)$ is a ascent if and only if there is a red arc $\beta \in \mathcal{R}$ with $i=\tar(\beta)$ and $j=\src(\beta)$.
So we have constructed the desired permutation, and we conclude that $(\mathcal{G},\mathcal{R})$ is complete.
\end{proof}

\begin{rem}
    It does not follow immediately from Theorem \ref{thm:fullSizeComplete} that if $w \in A_n$ is a permutation, then $(\delta(w),\overline{\delta}(w))$ is a 2-colored noncrossing arc diagram. We shall see, however, that this is indeed the case as a consequence of Theorem~\ref{Main_B}.
\end{rem}

For the remainder of this section, we turn our attention to 2-colored noncrossing diagrams with fewer than $n$ arcs. More specifically, we wish to show that if an arc diagram is not completable, we can construct (potentially several) permutations in $A_n$ from the data in the diagram. To make this precise, we need the following definition.

\begin{define}\label{def:supportEquiv}
    Let $(\mathcal{G},\mathcal{R})$ and $(\mathcal{G}',\mathcal{R}')$ be 2-colored noncrossing arc diagrams on $n+1$ nodes. We say that $(\mathcal{G},\mathcal{R})$ and $(\mathcal{G}',\mathcal{R}')$ are \emph{support equivalent} if there exists bijections $\psi_g: \mathcal{G} \rightarrow \mathcal{G}'$ and $\psi_r:\mathcal{R}\rightarrow \mathcal{R}'$ such that for all $\alpha \in \mathcal{G}$ and $\beta \in \mathcal{R}$, we have $\supp(\alpha) = \supp(\psi_g(\alpha))$ and $\supp(\beta) = \supp(\psi_r(\beta)$.
\end{define}

The left and middle diagrams in Figure~\ref{fig:supportEquiv} are an example of 2-colored noncrossing arc diagrams which are support equivalent. We recall from Example~\ref{ex:arcDiagramNotComplete} that the left diagram is not completable. The middle diagram, however, is completable. Indeed, the permutation 14523 corresponds to the 2-colored noncrossing arc diagram on the right of Figure~\ref{fig:supportEquiv}. This diagram contains all of the arcs from the middle diagram plus an additional orange arc from 2 to 3.

 \begin{figure}
 \centering
\begin{tikzpicture}

\begin{scope}[decoration={markings,mark=at position 0.25 with {\arrow{<}},mark=at position 0.75 with {\arrow{<}}}]
\draw[very thick,blue,smooth,postaction=decorate](0,-1) to [out = 30, in = -30] (0,2);
\end{scope}

\begin{scope}[decoration={markings,mark=at position 0.25 with {\arrow{>}},mark=at position 0.75 with {\arrow{>}}}]
\draw[very thick,orange,dashed,smooth,postaction=decorate](0,-2) to [out = 150, in = -150] (0,1);
\end{scope}

\begin{scope}[decoration={markings,mark=at position 0.5 with {\arrow{>}}}]
\draw[very thick,orange,dashed,smooth,postaction=decorate] (0,1) to (0,2);
\end{scope}

\draw[fill] (0,1) circle [radius=.07];
\draw[fill] (0,-2) circle [radius=.07];
\draw[fill] (0,0) circle [radius=.07];
\draw[fill] (0,2) circle [radius=.07];
\draw[fill] (0,-1) circle [radius=.07];


\begin{scope}[decoration={markings,mark=at position 0.25 with {\arrow{>}},mark=at position 0.75 with {\arrow{>}}}]
\draw[very thick,orange,dashed,smooth,postaction=decorate](8,-2) to [out = 150, in = -90] (7,-0.5) to [out=90, in = -150] (8,1);
\end{scope}

\begin{scope}[decoration={markings,mark=at position 0.5 with {\arrow{>}}}]
\draw[very thick,orange,dashed,smooth,postaction=decorate] (8,1) to (8,2);
\draw[very thick,orange,dashed,smooth,postaction=decorate] (8,-1) to (8,0);
\end{scope}

\begin{scope}[decoration={markings,mark = at position 0.15 with {\arrow{<}},mark=at position 0.5 with {\arrow{<}},mark=at position 0.85 with {\arrow{<}}}]
\draw[very thick,blue,smooth,postaction=decorate](8,-1) to [out = 150, in = -150] (8,0.5) to [out = 30, in = -30] (8,2);
\end{scope}

\draw[fill] (8,1) circle [radius=.07];
\draw[fill] (8,-2) circle [radius=.07];
\draw[fill] (8,0) circle [radius=.07];
\draw[fill] (8,2) circle [radius=.07];
\draw[fill] (8,-1) circle [radius=.07];


\begin{scope}[decoration={markings,mark=at position 0.25 with {\arrow{>}},mark=at position 0.75 with {\arrow{>}}}]
\draw[very thick,orange,dashed,smooth,postaction=decorate](4,-2) to [out = 150, in = -90] (3,-0.5) to [out=90, in = -150] (4,1);
\end{scope}

\begin{scope}[decoration={markings,mark=at position 0.5 with {\arrow{>}}}]
\draw[very thick,orange,dashed,smooth,postaction=decorate] (4,1) to (4,2);
\end{scope}

\begin{scope}[decoration={markings,mark = at position 0.15 with {\arrow{<}},mark=at position 0.5 with {\arrow{<}},mark=at position 0.85 with {\arrow{<}}}]
\draw[very thick,blue,smooth,postaction=decorate](4,-1) to [out = 150, in = -150] (4,0.5) to [out = 30, in = -30] (4,2);
\end{scope}

\draw[fill] (4,1) circle [radius=.07];
\draw[fill] (4,-2) circle [radius=.07];
\draw[fill] (4,0) circle [radius=.07];
\draw[fill] (4,2) circle [radius=.07];
\draw[fill] (4,-1) circle [radius=.07];

\end{tikzpicture}
\caption{The left and middle diagrams are support equivalent. The middle diagram is completable, while the left diagram is not. The diagram on the right corresponds to the permutation $14523$.}\label{fig:supportEquiv}
\end{figure}
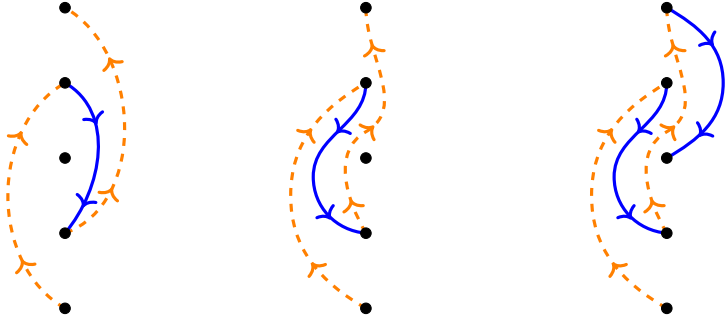

The following result shows that the previous example illustrated in Figure~\ref{fig:supportEquiv} is actually a general phenomenon.

\begin{thm}\label{thm:supportEquiv}
    Let $(\mathcal{G},\mathcal{R})$ be a 2-colored noncrossing arc diagram (on $n~+~1$ nodes). Then there exists a completable 2-colored noncrossing arc diagram $(\mathcal{G}',\mathcal{R}')$ which is support equivalent to~$(\mathcal{G},\mathcal{R})$.
\end{thm}

\begin{proof}
    Consider the set of arcs in $\mathcal{G}\cup\mathcal{R}$ as a directed graph on $[n+1]$. Now, as a consequence of \ref{C1} and \ref{TC2}, each node in this graph is the source of at most one arc and the target of at most one arc. Based on this observation, for $j \in [n+1]$, we denote by $\alpha_j$ the unique arc in $\mathcal{G}\cup\mathcal{R}$ with source $j$, if it exists. As in the proof of Theorem~\ref{thm:fullSizeComplete}, this gives a partial order on $[n+1]$ by taking the transitive closure of the relation $j \prec \tar(\alpha_j)$ for each arc $\alpha_j$. Choose some linear extension of this partial order, and let $w = w_1\cdots w_{n+1} \in A_n$ be the corresponding permutation.

     Now let $j \in [n+1]$. If $w_j > w_{j+1}$ (in the usual order on $\mathbb{N})$, we construct a green arc $\alpha'_j$ as follows:
     \begin{enumerate}
         \item The source of $\alpha'_j$ is $w_j$.
         \item The target of $\alpha'_j$ is $w_{j+1}$.
         \item For $w_k \in (w_{j+1},w_j)$, the node $w_k$ is right of $\alpha'_j$ if and only if $j<k$.
     \end{enumerate}
     If $w_j < w_{j+1}$ (in the usual order on $\mathbb{N}$), we construct a red arc $\beta'_j$ analogously.
     
     Now let $\mathcal{G}'' = \{\alpha'_j:w_j > w_{j+1}\}$ and $\mathcal{R}'' = \{\beta'_j: w_j < w_{j+1}\}$. We claim that $(\mathcal{G}'',\mathcal{R}'')$ is a complete 2-colored noncrossing arc diagram.
     
     First let $\alpha'_j,\alpha'_k \in \mathcal{G}''$ with $j < k$. We see that $\alpha'_j$ and $\alpha'_k$ satisfy \ref{C1} because $\alpha'_j$ is the only arc in $\mathcal{G}''$ with source $w_j$ and the only arc in $\mathcal{G}''$ with target $w_{j+1}$. To see that $\alpha'_j$ and $\alpha'_k$ satisfy \ref{C2}, let $w_i \in \supp(\alpha'_j)\cap \supp(\alpha'_k)$. If $w_i$ is right of $\alpha'_k$, then by construction, we have $k < i$. This means $j < i$, and so $w_i$ is right of $\alpha'_j$ as well. Analogously, if $w_i$ is left of $\alpha'_j$, then $w_i$ is left of $\alpha'_k$ as well. We conclude that we can draw $\alpha'_j$ and $\alpha'_k$ so that $\alpha'_j$ is always to the left of $\alpha'_k$ when their supports overlap. In particular, \ref{C2} is satisfied. This shows that $\mathcal{G}''$ is a (green) noncrossing arc diagram. The proof that $\mathcal{R}''$ is a (red) noncrossing arc diagram is analogous.
     
     Now let $\alpha'_j \in \mathcal{G}''$ and $\beta'_k \in \mathcal{R}''$. It is clear that  $\supp(\alpha'_j) \neq \supp(\beta'_k)$ (otherwise we would have both $j+1 = k$ and $k + 1 = j$). Moreover, we see that $\alpha'_j$ and $\beta'_k$ satisfy \ref{TC1} and \ref{TC2} by arguments analogous to those in the previous paragraph. To see that \ref{TC3} holds, suppose $\tar(\alpha'_j) = \src(\beta'_k)$, meaning $k = j+1$ and these arcs share a bottom endpoint. Now the top endpoint of $\beta'_k$ is $w_{j+2}$, and so if $\alpha'_j$ passes alongside this endpoint, it must pass on the left side. Likewise, if $\beta'_k$ passes alongside the top endpoint of $\alpha'_j$ (which is the node $w_j$), it must pass on the right. Finally, if $w_i \in \suppo(\alpha'_j)\cap \suppo(\beta'_k)$ and $w_i$ is left of $\alpha'_j$, then $i < j$. Since $k = j+1$ we have $w_i$ is also left of $\beta'_k$. We conclude that $\alpha'_j$ is left of $\beta'_k$, and so \ref{TC3} is satisfied. The argument that \ref{TC4} is satisfied is analogous.
     
     We have shown that $(\mathcal{G}'',\mathcal{R}'')$ is a 2-colored noncrossing arc diagram. The fact that it is complete then follows from Theorem \ref{thm:fullSizeComplete}. Now define $\mathcal{G}' = \{\alpha'_j:\alpha_{w_j} \in \mathcal{G}\}$ and define $\mathcal{R}'$ analogously. It follows that $(\mathcal{G}',\mathcal{R}')$ is a completable 2-colored noncrossing arc diagram. Moreover, this diagram is support equivalent to $(\mathcal{G},\mathcal{R})$ under the bijections $\alpha_{w_j} \mapsto \alpha'_j$ and $\beta_{w_k}\mapsto \beta'_k$.
\end{proof}

\section{Semibrick pairs for the type $A$ preprojective algebra}\label{sec:typeA}

In this section, we recall the definition of the preprojective algebra $\Pi_W$ associated to a finite Weyl group $W$. We then show that, in type $A$, there is a bijection between the semibrick pairs of $\mods \Pi_W$ and the 2-colored noncrossing arc diagrams constructed in Section~\ref{sec:arc-diagrams}. Moreover, we show that this bijection preserves completability. Finally, we deduce Corollary~\ref{Main_C} from this bijection.

We begin with the following definitions.

\begin{define}\label{def:preproj}
    Let $W$ be a finite Weyl group of type A, D, or E, and let $Q = (Q_0,Q_1)$ be a Dynkin quiver of the same type (with arbitrary orientation). Define an additional set of arrows
    $$Q_1^* = \left\{i\xrightarrow{\alpha^*} j \ \middle\vert \ \left(j \xrightarrow{\alpha} i\right) \in Q_1\right\}$$
    and let $\overline{Q} = (Q_0,Q_1\cup Q_1^*)$.
    \begin{enumerate}
        \item Let $x = \sum_{\alpha \in Q_1}(\alpha\alpha^* + \alpha^*\alpha)$. Then the \emph{preprojective algebra} of type $W$ is the bound quiver algebra $\Pi_W:= K\overline{Q}/(x)$, where $(x)$ is the two-sided ideal generated by $x$.
        \item Let $C = \bigcup_{\alpha \in Q_1} \{\alpha\alpha^*, \alpha^*\alpha\}$. We denote $RW:= K\overline{Q}/(C)$, where $(C)$ is the two-sided ideal generated by $C$.
    \end{enumerate}
\end{define}

\begin{ex}\label{ex:preproj}
Consider the following quivers:
$$
    \begin{tikzcd}[row sep=tiny]
        &&&&&&2\arrow[dr,"\alpha" below,shift right]\\
        \overline{Q}_A:&1 \arrow[r,"\alpha" below,shift right] & 2 \arrow[l,swap,"\alpha*" above,shift right]\arrow[r,"\beta*" below,shift right]&3,\arrow[l,swap,"\beta" above,shift right]&&\overline{Q}_D:&&1\arrow[ul,swap,"\alpha*" above,shift right]\arrow[r,"\beta*" below,shift right]\arrow[dl,swap,"\gamma*" above,shift right]&3\arrow[l,swap,"\beta" above,shift right].\\
        &&&&&&4\arrow[ur,"\gamma" below,shift right]
    \end{tikzcd}
    $$
    Then:
    \begin{eqnarray*}
        \Pi_{A_3} &=& K\overline{Q}_A/(\alpha\alpha^*, \beta\beta^*, \alpha^*\alpha+\beta^*\beta)\\
        RA_3 &=& K\overline{Q}_A/(\alpha\alpha^*, \beta\beta^*, \alpha^*\alpha, \beta^*\beta)\\
        \Pi_{D_4} &=& K\overline{Q}_D/(\alpha\alpha^*, \beta\beta^*, \gamma\gamma^*, \alpha^*\alpha+\beta^*\beta+\gamma^*\gamma)
    \end{eqnarray*}
    More generally, in type $A_n$, we identify the vertices of the corresponding quiver with $\{1,\ldots,n\}$ so that two vertices are joined by an arrow if and only if they are consecutive.
\end{ex}

We note that in general, $RA_n$ is a gentle algebra with no bands (since any cyclic word in the
alphabet $\overline{Q}_1$ would have a 2-cycle and all 2-cycles lie in $(C)$).
Thus, each indecomposable module over $RA_n$ is an orientation $Q_M$ of the full subquiver supporting $M$ (where, at each vertex, we place a copy of $K$, and each arrow acts by the identity map; we do not draw an arrow between $i$ and $i+1$ if both $a_i$ and $a_i^*$ act trivially.) 
Work of Butler and Ringel \cite{BR} implies that $\mods RA_n$ has finitely many indecomposables and each of these are bricks. 
(See also \cite[Propositions~4.1 and 4.2]{BCZ}.)

In type A, the next theorem allows us to dispense with $\Pi_{A_n}$, and work with the simpler algebra $RA_n$. We emphasize that this result holds only in type A, not in types D and E.

\begin{thm}\label{reduce_to_RAn}
The poset of torsion classes $\tors RA_n$ is isomorphic to $\tors \Pi_{A_n}$.
In particular,
\begin{enumerate}
\item there is a bijection from the set of bricks of $\Pi_{A_n}$ to the set of bricks of $RA_n$, and
\item a semibrick pair of $\Pi_{A_n}$ is completable if and only if the corresponding semibrick pair is completable for $RA_n$.
\end{enumerate}
\end{thm}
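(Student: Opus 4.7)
The plan is to exploit the canonical algebra surjection $\pi\colon \Pi_{A_n} \twoheadrightarrow RA_n$, which is well-defined because each mesh relation defining $\Pi_{A_n}$ is a signed difference of two $2$-cycles and therefore already lies in the defining ideal of $RA_n$. Restriction along $\pi$ realizes $\mods RA_n$ as the full subcategory of $\mods \Pi_{A_n}$ consisting of modules annihilated by $J := \ker \pi$; write $\iota$ for this inclusion. Since that subcategory is closed under submodules, quotients, and extensions in $\mods \Pi_{A_n}$, $\Ext$-groups computed over either algebra coincide, and $\T \mapsto \iota(\T)$ defines an order-preserving injection $\iota_*\colon \tors RA_n \hookrightarrow \tors \Pi_{A_n}$.

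The whole theorem then hinges on the key claim that \emph{every brick of $\Pi_{A_n}$ is annihilated by $J$.} Granting this, item (1) is immediate from full faithfulness of $\iota$, and for surjectivity of $\iota_*$, given $\T' \in \tors \Pi_{A_n}$, Theorem~\ref{torsion_to_simple} writes $\T' = \Filt\Fac(\D(\T'))$ for a semibrick $\D(\T')$; the claim places $\D(\T')$ inside $\iota(\mods RA_n)$, and closure under factors and extensions pushes all of $\T'$ inside as well, so $\T' = \iota_*(\iota^{-1}\T')$. Faithfulness of $\iota$ makes $\iota_*^{-1}$ order-preserving, so $\iota_*$ is a poset isomorphism. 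Part (2) then drops out of Theorem~\ref{torsion_to_simple}(3): since every brick lives in $\mods RA_n$ and $\Ext$ agrees, semibrick pairs and $2$-term simple minded collections match over the two algebras, and completability is preserved.

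The technical core is the key claim, which I would prove by induction on $n$ via a single module endomorphism. For a brick $M \in \mods \Pi_{A_n}$, define $\eta\colon M \to M$ componentwise by $\eta_j := (a_{j-1} a_{j-1}^*)_M = (a_j^* a_j)_M$ at each internal vertex (the two expressions agreeing by the mesh relation at $j$) and $\eta_1 = 0 = \eta_n$ at the boundary vertices (forced by $a_1^* a_1 = 0 = a_{n-1} a_{n-1}^*$). Compatibility with every arrow is automatic: $\eta_{j+1} \circ a_j$ and $a_j \circ \eta_j$ both equal $a_j a_j^* a_j$ by direct expansion, and similarly for $a_j^*$. Since $M$ is a brick, $\eta$ is either zero or invertible. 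If $\eta = 0$, then every generator of $J$ annihilates $M$ and the claim holds. If $\eta$ is invertible, its boundary components are isomorphisms yet also zero, forcing $M_1 = 0 = M_n$; then $M$ restricts to a brick over the smaller preprojective algebra $\Pi_{A_{n-2}}$ on the subdiagram $\{2, \dotsc, n-1\}$, and the inductive hypothesis finishes the proof. The main obstacle, and the most delicate piece, is carefully verifying the boundary reductions in the inductive step—in particular that the restriction of $M$ to $\{2, \dotsc, n-1\}$ genuinely lives in $\mods \Pi_{A_{n-2}}$, which follows because the mesh relations at vertices $1$ and $n$ force the boundary $2$-cycles of the smaller algebra to act as zero on $M$.
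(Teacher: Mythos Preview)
Your approach differs substantially from the paper's, which simply cites two external results---Mizuno's theorem that $\tors \Pi_{A_n}$ is isomorphic to the weak order on $A_n$, and the analogous statement for $RA_n$ from \cite{BCZ}---to conclude that both posets are isomorphic to the same combinatorial object. Your direct argument via the algebra surjection is more self-contained, and your proof of the key claim (every brick of $\Pi_{A_n}$ is annihilated by $J$) using the central endomorphism $\eta$ is correct and elegant.

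However, there is a genuine gap. You assert that $\iota(\mods RA_n)$ is closed under extensions in $\mods \Pi_{A_n}$, and deduce both that $\Ext$-groups agree and that $\iota(\T)$ is automatically a torsion class in $\mods \Pi_{A_n}$. This closure fails for $n \geq 3$. In $\Pi_{A_3}$, the indecomposable projective $P_2^\Pi$ has dimension vector $(1,2,1)$ with socle $S_2$; the sequence $0 \to S_2 \to P_2^\Pi \to P_2^\Pi/S_2 \to 0$ has both ends in $\mods RA_3$ (indeed both are bricks), yet the central loop $c = a_1 a_1^*$ acts nontrivially on $P_2^\Pi$, so $P_2^\Pi \notin \mods RA_3$. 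In particular $\iota(\mods RA_3)$ is not a torsion class in $\mods \Pi_{A_3}$, so your map $\iota_*$ is not well-defined as stated, and the blanket claim about $\Ext$-groups is false.

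The approach is salvageable. For the poset isomorphism, work instead with restriction $\T' \mapsto \T' \cap \mods RA_n$ and use your key claim together with Theorem~\ref{torsion_to_simple}(2) to identify both sides with the common set of semibricks. For item~(2), the point is that extensions \emph{do} stay in $\mods RA_n$ once $\Hom(S,T) = 0$: given $0 \to T \to E \to S \to 0$ over $\Pi_{A_n}$, centrality of $c$ makes $cE$ simultaneously a quotient of $S$ (since $cT = 0$) and a submodule of $T$ (since $cS = 0$), hence $cE = 0$. Thus $\Ext^1_{\Pi}(S,T) = \Ext^1_{RA_n}(S,T)$ whenever $\Hom(S,T) = 0$, which is exactly the hypothesis in the definition of a semibrick pair.
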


\begin{proof}
\cite[Theorem~0.2]{mizuno} says that the lattice of torsion classes of $\Pi_{A_n}$ is isomorphic to the weak order on the type $A_n$ Weyl group.
\cite[Theorem~4.3.8]{BCZ} says that the the lattice of torsion classes of $RA_n$ is also isomorphic to the weak order.
(We review the bijection involved in this isomorphism in Section~\ref{sec:arcs_to_bricks}.)
This proves the first item.
The second item follows from the first.
\end{proof}

\subsection{From arcs to bricks}\label{sec:arcs_to_bricks}
In this section we recall a bijection from the arcs we have worked with in Section~\ref{sec:arcs} to the bricks in $\mods RA_n$ as described in \cite[Section~4]{BCZ}.

We recall that each indecomposable module in $\mods RA_n$ is a brick and that for $M \in \mods RA_n$ a brick, there exist $p \leq q \in [n]$ so that the dimension vector of $M$ is $\underline{\dim}(M) = \sum_{j = p}^q e_j$. The \emph{support} of $M$ is then $\supp(M) = [p,q]$. When $\supp(M) = [1,n]$, we say that $M$ has \emph{full support}.

Recall that the arrows in $RA_n$ are of the form $a_i: i \rightarrow i+1$ and $a_i^*: i+1\rightarrow i$, for  $i=1,\dotsc, n-1$ (and each two cycle $a_ia_i^*$ and $a_i^*a_i$ is in the ideal defining $RA_n$).
Thus each indecomposable module $M$ is determined uniquely by its support and which one of the arrows $a_r$ or $a_r^*$ acts non-trivially on~$M$.

In order to establish the correspondence between arcs and bricks, we recall the following definition.

\begin{define}
    Let $\alpha$ and $\beta$ be arcs on $n+1$ nodes. We say that $\beta$ is a \emph{subarc} of $\alpha$ if both of the following conditions are satisfied:
    \begin{enumerate}
        \item $\supp(\beta)\subseteq \supp(\alpha)$; 
        \item $\alpha$ and $\beta$ pass on the same side of each node in $\suppo(\beta)$.
\end{enumerate}
\end{define}
\begin{rem}
    We emphasize that the definition of a subarc is agnostic to the color/orientation of the arcs. In particular, a green arc and a red arc can have a common subarc (as will be the case in many of our proofs). Thus we generally consider subarcs as having arbitrary color/orientation.
\end{rem}

    Let $\alpha$ be a (green or red) arc on $n+1$ nodes and write $\supp(\alpha) = [p,q]$. We define $\sigma(\alpha)$ to be the brick in $\mods RA_n$ with support $[p,q-1]$ such that for each $i\in \suppo(\alpha)$ 
\begin{enumerate}
\item If $\alpha$ passes to the right of $i$, then $a_{i-1}$ acts nontrivially on $\sigma(\alpha)$; and
\item If $\alpha$ passes to the left of $i$, then $a^*_{i-1}$ acts nontrivially on $\sigma(\alpha)$.
\end{enumerate}

The next proposition is a combination of \cite[Proposition 4.7 and Theorem~4.13]{BCZ}, and completely characterizes the semibricks in $\mods RA_n$.

\begin{prop}\label{prop: arc bij}
The map $\sigma$ from the set of green (resp. red) arcs on $n+1$ nodes to the set of bricks in $\mods RA_n$ is a bijection.
Moreover, a collection of green (resp. red) arcs is a noncrossing arc diagram if and only if the corresponding set of bricks is a semibrick.
\end{prop}

\begin{rem}
    An alternative combinatorial description of the bricks and  semibricks over preprojective algebras (and hence over $RA_n$) is given in \cite{asai_preproj}.
\end{rem}

As a consequence of Proposition~\ref{prop: arc bij} and Theorem \ref{torsion_to_simple}, we have the following commutative diagram:

\[
\begin{tikzcd}[row sep=large, column sep=large]
A_n \arrow[two heads, rightarrow]{d}{\delta} \arrow[rrd, dashed,to path=-| (\tikztotarget)] &  \\
\{\text{green noncrossing arc diagrams}\}  \arrow{r}{\sigma} & \{\D(\T): \, \T\in \tors RA_n\}\arrow{r}{\Filt\Fac(-)}&\tors RA_n\\
\end{tikzcd}
\]

One of the main results of \cite[Section~4.3]{BCZ} says that the composition in the above diagram is a poset isomorphism from the weak order on $A_n$ to the poset of torsion classes on $\tors RA_n$.

\begin{rem}\label{rem: arc bij}
There is an analogous commutative diagram where we replace $\delta$ with $\bar{\delta}$, ``green noncrossing arc diagrams'' with ``red noncrossing arc diagrams'', $\D(\T)$ with $\U(\T)$, and $\Filt\Fac(-)$ with $^\perp(-)$. In particular, for $w \in A_n$ there exists $\T \in \tors\Lambda$ so that $\sigma\circ\delta(w) = \D(\T)$ and $\sigma\circ\overline{\delta}(w) = \U(\T)$. By Theorem~\ref{torsion_to_simple}, this means $\sigma\circ\delta(w)\sqcup\sigma\circ\overline{\delta}(w)[1]$ is a 2-term simple minded collection. 
\end{rem}

As a consequence, we obtain a combinatorial criteria for deciding when a semibrick pair of $RA_n$ is completable.

\begin{prop}\label{prop:completability_criterion}
Let $\D\sqcup \U[1]$ be a semibrick pair in $\mods RA_n$. Then
\begin{enumerate}
    \item $\D\sqcup \U[1]$ is completable if and only if there exists a permutation $w \in A_n$ so that $\D\subseteq \sigma\circ\delta(w)$ and $\U\subseteq \sigma\circ\overline{\delta}(w)$.
    \item $\D\sqcup \U[1]$ is a 2-term simple minded collection if and only if there exists a permutation  $w \in A_n$ so that $\D =  \sigma\circ\delta(w)$ and $\U= \sigma\circ\overline{\delta}(w)$.
\end{enumerate}
\end{prop}

\subsection{Semibrick pairs and 2-colored noncrossing arc diagrams}\label{sec:ThmD}
In this section, we show that the correspondence between bricks and arcs from Section~\ref{sec:arcs_to_bricks} extends to a correspondence between semibrick pairs and 2-colored noncrossing arc diagrams; i.e., we prove Theorem~\ref{Main_B}. We then deduce Corollary~\ref{Main_C} (as Corollaries~\ref{cor:Main_C1} and~\ref{cor:Main_C2} below) from this correspondence and the results of Section~\ref{sec:arc-diagrams}.

We begin by giving the formal statement of Theorem~\ref{Main_B}.

\begin{thm}[Theorem~\ref{Main_B}]\label{arc_pairwise_compat}\
\begin{enumerate}
    \item There is a bijection between semibrick pairs for the algebra $RA_n$ and  2-colored noncrossing arc diagrams on $n+1$ nodes given by
$$\D\sqcup \U[1] \mapsto (\sigma^{-1}(\D),\sigma^{-1}(\U)).$$
    \item Let $\D\sqcup \U[1]$ be a semibrick pair in $\mods RA_n$. Then $\D\sqcup \U[1]$ is completable (resp. is a 2-term simple minded collection) if and only if $(\sigma^{-1}(\D),\sigma^{-1}(\U))$ is a completable (resp. complete) 2-colored noncrossing arc diagram.
\end{enumerate}
\end{thm}

We note that Theorem~\ref{arc_pairwise_compat} serves as our justification for the  definitions of 2-colored noncrossing arc diagrams and their completability and completeness. See Remark \ref{rem:whyDef}.

We break the bulk of the proof of Theorem~\ref{arc_pairwise_compat} into  Lemmas~\ref{lem:intersect_1}, \ref{lem:intersect_2}, \ref{lem:same_support}, \ref{lem:left}, and~\ref{lem:extBothWays}.
The arguments closely follow \cite[Section~4.2]{BCZ}, in which arc crossings are shown to correspond to nonzero homomorphisms between the corresponding bricks.
To that end we recall the following definition.

\begin{define}\label{def:pred_arc}
Let $\alpha$ be an arc on $n+1$ nodes and suppose that $\gamma$ is a subarc of $\alpha$.
We say that $\gamma$ is a \emph{predecessor closed subarc} if $\alpha$ does not pass to the right of the bottom endpoint of $\gamma$ nor to the left of its top endpoint.
The arc $\gamma$ is a \emph{successor closed subarc} if $\alpha$ does not pass to the left of the bottom endpoint of $\gamma$ nor to the right of its top endpoint.
\end{define}

The  next theorem follows from well-known work of Crawley-Boevey.
See \cite[Section~2]{CB} and \linebreak \cite[Proposition~4.7]{BCZ}.

\begin{thm}\label{thm:BC}
Let $\alpha$ be an arc on $n+1$ nodes and let $\gamma$ be a subarc of $\alpha$.
Let $S = \sigma(\alpha)$ be the brick corresponding to $\alpha$ and let $T = \sigma(\beta)$ be the brick corresponding to $\gamma$.
Then $\gamma$ is a predecessor (resp. successor) closed subarc of $\alpha$ if and only if $T$ is a quotient (resp. submodule) of $S$.
\end{thm}

In \cite{BDMHY}, the authors study the combinatorics of gentle algebras, and give an explicit basis of $\Ext_\Lambda^1(S,T)$ for any indecomposable modules $S$ and $T$.
In particular, they show that certain nonzero homomorphisms from $T$ to $S$ give rise to extensions with decomposable middle terms.
Such \linebreak homomorphisms are called \emph{two-sided graph maps}.
See \cite[Definition~2.6]{BDMHY}.
In the first of our four lemmas, we use the arc-analog of two-sided graph maps, which we call \emph{two-sided arc maps}.

In what follows, it will be useful to consider ``empty'' arcs. More precisely, for $i \in [n+1]$, we define the \emph{empty arc} at the node $i$ to have top and bottom endpoint both equal to $i$. We emphasize that empty arcs are not arcs in the sense of Definition~\ref{def:red_arc}. Thus for consistency, we will use the term ``possibly empty arc'' if we wish to consider both arcs (in the sense of Defintion~\ref{def:red_arc}) and empty arcs.

Suppose that $\gamma$ is a subarc of two distinct arcs, $\alpha$ and $\beta$.
Then $\gamma$ induces a decomposition of $\alpha$ into three possibly empty subarcs $\alpha_1, \gamma$, and $\alpha_2$ as follows:
The bottom endpoint of $\alpha_1$ is the same as the bottom endpoint of $\alpha$, and its top endpoint is the bottom endpoint of $\gamma$.
The bottom endpoint of $\alpha_2$ is equal to the top endpoint of $\gamma$, and its top endpoint is equal to the top endpoint of $\alpha$.
In the same way, $\gamma$ also decomposes $\beta$ into three possibly empty subarcs $\beta_1$, $\gamma$, and $\beta_2$. See Example \ref{ex:ext} and the middle diagram of Figure~\ref{fig:ext} for an example.

\begin{define}\label{def:two_sided}
In the notation of the preceding paragraph, we say that the pair of triples $((\alpha_1, \gamma, \alpha_2),(\beta_1, \gamma,\beta_2))$ is a \emph{two-sided arc map} provided that (a) $\gamma$ is a successor closed subarc of $\alpha$ and a predecessor closed subarc of $\beta$, (b) $\alpha_1$ and $\beta_1$ are not both empty, and (c) $\alpha_2$ and $\beta_2$ are not both empty.
\end{define}

Suppose that $S$ and $T$ are bricks with corresponding arcs $\alpha$ and $\beta$, respectively.
Assume that $\gamma$ is a subarc of both $\alpha$ and $\beta$ such that the triple  $((\alpha_1, \gamma, \alpha_2),(\beta_1, \gamma,\beta_2))$ is a two-sided arc map.
Let $\eta_1$ be the arc determined by the following:
\begin{enumerate}
\item its bottom endpoint equal to the bottom endpoint of $\alpha$;
\item its top endpoint equal to the top endpoint of $\beta$; 
\item it has $\alpha_1$, $\gamma$ and $\beta_2$ as subarcs;
\item $\alpha$ and $\eta_1$ pass on the same side of the bottom endpoint of $\gamma$; and
\item $\beta$ and $\eta_1$ pass on the same side of the top endpoint of $\gamma$.
\end{enumerate}
Define an arc $\eta_2$ symmetrically.
See the rightmost diagram of Figure~\ref{fig:ext}.

The next theorem follows immediately from \cite[Theorem~8.5]{BDMHY}. See also~\cite{schroer}.

\begin{thm}\label{thm:two_sided}
Suppose that $S \neq T$ are bricks in $\mods RA_n$ with corresponding arcs $\alpha = \sigma^{-1}(S)$ and $\beta = \sigma^{-1}(T)$. Suppose there exists a common subarc $\gamma$ of $\alpha$ and $\beta$ such that the triple $((\alpha_1,\gamma,\alpha_2),(\beta_1,\gamma,\beta_2))$ is a two-sided arc map. For $i \in \{1,2\}$, let $\eta_i$ be defined as in the previous paragraph and let $E_i = \sigma(\eta_i)$ be the corresponding brick.
Then
\begin{enumerate}
    \item There is a nonzero extension in $\Ext_\Lambda^1(S,T)$ whose middle term is $E_1\oplus E_2$.
    \item There is a nonzero morphism in $\Hom_\Lambda(T,S)$.
\end{enumerate}
In particular, each two-sided arc map for $\alpha$ and $\beta$ corresponds to unique extension in $\Ext_\Lambda^1(S,T)$. Moreover, if there is no two-sided arc map for $\alpha$ and $\beta$, then any nonzero extension in $\Ext_\Lambda^1(S,T)$ has an indecomposable middle term.
\end{thm}
An explicit example of a two-sided arc map is shown in Example~\ref{ex:ext}.

\begin{lem}\label{lem:intersect_1}
Let $S\neq T$ be bricks in $\mods RA_n$, let $\alpha = \sigma^{-1}(S)$ be the arc corresponding to $S$ and let $\beta = \sigma^{-1}(T)$ be the arc corresponding to $T$. Suppose $\alpha$ and $\beta$ do not share any endpoints. Then $S\sqcup T[1]$ is a semibrick pair (of rank 2) if and only if $\alpha$ and $\beta$ do not cross in their interiors.
\end{lem}

\begin{proof}
Assume that $\alpha$ and $\beta$ cross in their interiors.
\cite[Lemma~4.10]{BCZ} says that there exists an arc $\gamma$ that is a predecessor closed subarc of either $\alpha$ or $\beta$ which is also a successor closed subarc of the other.
If $\gamma$ is a predecessor closed subarc of $\alpha$ and a successor closed subarc of $\beta$, then $S$ surjects onto $\sigma(\gamma)$, and $\sigma(\gamma)$ maps into $T$.
This means $\Hom_\Lambda(S,T) \neq 0$, and so $S\sqcup T[1]$ is not a semibrick pair.

Otherwise, $\gamma$ must be a predecessor closed subarc of $\beta$ and a successor closed subarc of $\alpha$.
Define $\alpha_i$ and $\beta_i$ as in the paragraph preceding Definition~\ref{def:two_sided}.
We argue that $((\alpha_1, \gamma, \alpha_2),(\beta_1, \gamma,\beta_2))$ is a two-sided graph map.

Since $\alpha$ and $\beta$ do not share any endpoints, we cannot have $\alpha = \gamma = \beta$. If $\alpha \neq \gamma \neq \beta$, then at most one of $\alpha_1, \beta_1,\alpha_2,$ and $\beta_2$ is empty.
(One of these four arcs is empty when $\gamma$ shares a top or bottom endpoint with $\alpha$ or $\beta$.)
Now assume that $\gamma$ is equal to either $\alpha$ or $\beta$.
If $\gamma$ is equal to $\alpha$, then both $\beta_1$ and $\beta_2$ are nonempty.
The situation is the same when $\gamma$ is equal to $\beta$.
In all cases, $((\alpha_1, \gamma, \alpha_2),(\beta_1, \gamma,\beta_2))$ is a two-sided arc map.
Therefore, $\Ext_\Lambda^1(S,T)\ne 0$, and so $S\sqcup T[1]$ is not a semibrick pair.

    Now suppose that $\alpha$ and $\beta$ do not cross in their interiors. By Proposition \ref{prop: arc bij}, this means that $\Hom_\Lambda(S,T) = 0 = \Hom_\Lambda(T,S)$. It then follows from Theorem \ref{thm:two_sided} that the middle term of any nonzero extension in $\Ext_\Lambda^1(S,T)$ must be indecomposable (and thus a brick). This means that $\supp(S) \cap \supp(T) = \emptyset$ and there exists an arc $\gamma$ so that $\supp(S) \cup \supp(T) = \supp(\sigma(\gamma))$. Translated to arcs, we then have that $\supp(\alpha)\cap \supp(\beta)$ contains precisely one node which is an endpoint of both $\alpha$ and $\beta$, a contradiction. We conclude that $\Ext_\Lambda^1(S,T) = 0$ and so $S\sqcup T[1]$ is a semibrick pair.
\end{proof}

\begin{ex}\label{ex:ext}
    Let $\alpha$ be the solid blue arc (considered as a green arc) in the left diagram of Figure~\ref{fig:ext} and let $\beta$ be the dashed orange arc (considered as a red arc) in the left diagram of Figure~\ref{fig:ext}. These arcs correspond to the bricks {\footnotesize $\begin{matrix}24\\3\end{matrix}$} and ${\footnotesize\begin{matrix}2\\13\end{matrix}}$ respectively. The common subarc $\gamma$ is the black arc in the center diagram of Figure \ref{fig:ext}. This is a predecessor closed subarc of $\beta$ and a successor closed subarc of $\alpha$. The arcs $\alpha_2$ and $\beta_1$ in the factorizations of $\alpha$ and $\beta$ are the solid blue and dashed orange arcs in the center diagram of Figure \ref{fig:ext}. The arcs $\alpha_1$ and $\beta_2$ are both empty. The third figure shows the arcs $\eta_1$ and $\eta_2$ corresponding to the middle terms of the extension
    $${\footnotesize \begin{matrix}2\\13\end{matrix}}\hookrightarrow {\footnotesize\begin{matrix}\phantom{2}24\\13\end{matrix}}\sqcup{\footnotesize\begin{matrix}2\\3\end{matrix}}\twoheadrightarrow {\footnotesize \begin{matrix}24\\3\end{matrix}}.$$
This extension shows that $\footnotesize {\begin{matrix}24\\3\end{matrix}}\sqcup{\footnotesize\begin{matrix}2\\13\end{matrix}}[1]$ is not a semibrick pair.
\end{ex}

 \begin{figure}[h]
 \centering
\begin{tikzpicture}


\begin{scope}[decoration={markings,mark = at position 0.1 with {\arrow{>}},mark=at position 0.5 with {\arrow{>}},mark=at position 0.9 with {\arrow{>}}}]
\draw[very thick,orange,dashed,smooth,postaction=decorate](0,-2) to [out = 150, in = -150] (0,-0.5) to [out = 30, in = -30] (0,1);
\end{scope}

\begin{scope}[decoration={markings,mark = at position 0.15 with {\arrow{<}},mark=at position 0.5 with {\arrow{<}},mark=at position 0.85 with {\arrow{<}}}]
\draw[very thick,blue,smooth,postaction=decorate](0,-1) to [out = 30, in = -30] (0,0.5) to [out = 150, in = -150] (0,2);
\end{scope}

\draw[fill] (0,1) circle [radius=.07];
\draw[fill] (0,-2) circle [radius=.07];
\draw[fill] (0,0) circle [radius=.07];
\draw[fill] (0,2) circle [radius=.07];
\draw[fill] (0,-1) circle [radius=.07];


\draw[thick] (2,-1) to [out = 30, in = -30] (2,1);

\begin{scope}[decoration={markings,mark=at position 0.6 with {\arrow{<}}}] (2,1) to (2,2);
\draw[very thick,blue,postaction=decorate] (2,1) to (2,2);
\end{scope}

\begin{scope}[decoration={markings,mark=at position 0.5 with {\arrow{>}}}]
\draw[very thick,orange,dashed,postaction=decorate] (2,-2) to (2,-1);
\end{scope}

\draw[fill] (2,1) circle [radius=.07];
\draw[fill] (2,-2) circle [radius=.07];
\draw[fill] (2,0) circle [radius=.07];
\draw[fill] (2,2) circle [radius=.07];
\draw[fill] (2,-1) circle [radius=.07];


\draw[thick,black](4,-2) to [out = 150, in = -150] (4,-0.5) to [out = 30, in = -30] (4,0.5) to [out = 150, in = -150] (4,2);
\draw[thick,black](4,-1) to [out = 30, in = -30] (4,1);

\draw[fill] (4,1) circle [radius=.07];
\draw[fill] (4,-2) circle [radius=.07];
\draw[fill] (4,0) circle [radius=.07];
\draw[fill] (4,2) circle [radius=.07];
\draw[fill] (4,-1) circle [radius=.07];

\end{tikzpicture}
\caption{The arcs in Example \ref{ex:ext}}
\label{fig:ext}
\end{figure} 

\begin{lem}\label{lem:intersect_2}
Let $S\neq T$ be bricks in $\mods RA_n$. Let $\alpha = \sigma^{-1}(S)$ be the arc corresponding to $S$, let $\beta = \sigma^{-1}(T)$ be the arc corresponding to $T$, and
suppose $\alpha$ and $\beta$ share a bottom endpoint or a top endpoint. Further suppose that $\alpha$ and $\beta$ cross in their interiors. Then $S\sqcup T[1]$ is not a semibrick pair.
\end{lem}

\begin{proof}
By symmetry, we can assume that $\alpha$ and $\beta$ share a bottom endpoint (so that $\tar(\beta)=\src(\alpha)$).
We will argue that there is a nonzero map from $S$ to $T$, and so $S \sqcup T[1]$ is not a semibrick pair.
Without loss of generality, we may take the bottom endpoint to be the lowest one, labeled by $1$.
Observe that neither $\alpha$ nor $\beta$ has its top endpoint at $2$ (otherwise they could be drawn so that they do not cross).

Let $k$ be the smallest node such that $\alpha$ and $\beta$ pass on different sides of $k$.
(Such a $k$ exists because otherwise the two arcs can be drawn so that they never intersect in their interiors.)
We break into two cases:
In the first case, $\alpha$ passes on the right side of $k$ and $\beta$ passes on the left.
In this case, we can draw $\alpha$ and $\beta$ so that they do not intersect anywhere in their support along the vertices $1,2, \ldots k$.
When we do this, $\alpha$ lies strictly to the right of $\beta$.
Let $\gamma$ be the subarc of $\alpha$ with endpoints $1$ and $k$.
From our construction, $\gamma$ is also a subarc of $\beta$.
Indeed, $\gamma$ is a predecessor closed subarc of $\alpha$ and a successor closed subarc of $\beta$.
Thus, there exists a brick $M_\gamma$ such that $S$ maps onto $M_\gamma$ and $M_\gamma$ maps into $T$, giving rise to a nonzero homomorphism from $S$ to $T$.
Therefore, the first case is impossible.

In the second case, $\alpha$ passes on the left side of $k$ and $\beta$ passes on the right side.
Since the two arcs intersect in their interiors, there exists a node $j>k$ such that $\alpha$ does not pass to the left of $j$ and $\beta$ does not pass to the right side of $j$.
Take $j$ to be as small as possible (and still greater than~$k$).
Set $j'$ to be the largest element in the set 
\[\{x: k\le x<j\text{ and $\alpha$ passes to the left of $x$ and $\beta$ passes to the right of $x$}\}.\]
Note that $j'$ exists because this set is not empty ($k$ is a member).

Now consider the subarc $\gamma'$ of $\alpha$ with endpoints $j'$ and $j$.
For any point $i$ between $j'$ and $j$, if $\alpha$ passes to the left of $i$ then $\beta$ must also pass to the left (by the definition of $j'$).
If $\alpha$ passes to the right side of $i$ then $\beta$ must also pass to the right (by the definition of $j$).
Therefore $\gamma'$ is also a subarc of $\beta$.
Indeed, $\gamma'$ is a predecessor closed subarc of $\alpha$ and a successor closed subarc of $\beta$.
As in the previous case, there is a nonzero homomorphism from $S$ to $T$.
\end{proof}

\begin{ex}\label{ex:endptInt}
    Let $\alpha$ be the solid blue arc (considered as a green arc) in the left diagram of Figure~\ref{fig:endptInt} and let $\beta$ be the dashed orange arc (considered as a red arc) in the left diagram of Figure~\ref{fig:endptInt}. These arcs correspond to the bricks ${\tiny \begin{matrix}1\phantom{2}\\24\\3\end{matrix}}$ and ${\footnotesize \begin{matrix}13\\2\end{matrix}}$, respectively. The smallest node of which $\alpha$ and $\beta$ pass on opposite sides is $k = 3$. In this example, $\alpha$ passes on the right side of $k$ and $\beta$ passes on the left side of $k$. Moreover, this is the only node with this property. Thus the arc $\gamma'$ shown in the right diagram of Figure \ref{fig:endptInt} is a predecessor closed subarc of $\alpha$ and a successor closed subarc of $\beta$. This subarc corresponds to the nonzero morphism
    $${\tiny \begin{matrix}1\phantom{2}\\24\\3\end{matrix}}\rightarrow {\footnotesize \begin{matrix}13\\2\end{matrix}}$$
    which has image ${\footnotesize\begin{matrix}1\\2\end{matrix}}$. The existence of this morphism shows that ${\tiny \begin{matrix}1\phantom{2}\\24\\3\end{matrix}}\sqcup{\footnotesize\begin{matrix}13\\2\end{matrix}}[1]$ is not a semibrick pair.
\end{ex}

 \begin{figure}[h]
 \centering
\begin{tikzpicture}

\begin{scope}[decoration={markings,mark = at position 0.25 with {\arrow{>}},mark=at position 0.5 with {\arrow{>}},mark=at position 0.85 with {\arrow{>}}}]
\draw[very thick,orange,dashed,smooth,postaction=decorate](0,-2) to [out = 40, in = -30] (0,-0.5) to [out = 150, in = -150] (0,1);
\end{scope}

\begin{scope}[decoration={markings,mark = at position 0.15 with {\arrow{<}},mark=at position 0.5 with {\arrow{<}},mark=at position 0.85 with {\arrow{<}}}]
\draw[very thick,blue,smooth,postaction=decorate](0,-2) to [out = 20, in = -15] (0,0.5) to [out = 165, in = -150] (0,2);
\end{scope}

\draw[fill] (0,1) circle [radius=.07];
\draw[fill] (0,-2) circle [radius=.07];
\draw[fill] (0,0) circle [radius=.07];
\draw[fill] (0,2) circle [radius=.07];
\draw[fill] (0,-1) circle [radius=.07];


\draw[thick] (3,-2) to [out = 30, in = -30] (3,0);

\draw[fill] (3,1) circle [radius=.07];
\draw[fill] (3,-2) circle [radius=.07];
\draw[fill] (3,0) circle [radius=.07];
\draw[fill] (3,2) circle [radius=.07];
\draw[fill] (3,-1) circle [radius=.07];

\end{tikzpicture}
\caption{The arcs in Example \ref{ex:endptInt}}
\label{fig:endptInt}
\end{figure} 

\begin{lem}\label{lem:same_support}
    Let $S \neq T$ be bricks in $\mods RA_n$. Let $\alpha = \sigma^{-1}(S)$ be the arc corresponding to $S$, let $\beta = \sigma^{-1}(T)$ be the arc corresponding to $T$, and suppose that $\supp(\alpha) = \supp(\beta)$. Then $S\sqcup T$ is not a semibrick and $S\sqcup T[1]$ is not a semibrick pair.
\end{lem}

\begin{proof}
    The proof is similar to that of Lemma~\ref{lem:intersect_2}. In particular, we assume without loss of generality that the bottom endpoint of both $\alpha$ and $\beta$ is the node 1. Now, since we have assumed $S \neq T$, there exists a smallest node $k$ such that $\alpha$ and $\beta$ pass on different sides of $k$. (In particular, it must be the case that $|\supp(\alpha)| > 2$.) We break into two cases: In the first case, $\alpha$ passes on the right side of $k$ and $\beta$ passes to the left. Now let $\gamma$ be the subarc of $\alpha$ with endpoints 1 and $k$. As in the proof of Lemma~\ref{lem:intersect_2}, $\gamma$ is a predecessor closed subarc of $\alpha$ and a successor closed subarc of $\beta$. In particular, there is a nonzero homomorphism from $S$ to $T$.
    
    In the second case, $\alpha$ passes on the left side of $k$ and $\beta$ passes on the right side. Now, if $\alpha$ and $\beta$ cross in their interiors, we are in the situation of Lemma~\ref{lem:intersect_2}. We thus assume that $\alpha$ and $\beta$ do not cross in their interiors. We denote by $j$ the top endpoint of $\alpha$ and by $j'$ the largest node such that $\alpha$ passes to the left of $j'$ and $\beta$ passes to the right of $j'$. We consider the subarc $\gamma'$ of $\alpha$ with endpoints $j'$ and $j$. As in the proof of Lemma~\ref{lem:intersect_2}, we then have that $\gamma'$ is a predecessor closed subarc of $\alpha$ and a successor closed subarc of $\beta$. In particular, there is a nonzero homomorphism from $S$ to $T$.
\end{proof}

\begin{lem}\label{lem:left}
    Let $S\neq T$ be bricks in $\mods RA_n$, let $\alpha = \sigma^{-1}(S)$ be the arc corresponding to $S$, and let $\beta = \sigma^{-1}(T)$ be the arc corresponding to $T$.
    \begin{enumerate}
        \item If $\alpha$ and $\beta$ share a bottom endpoint and $\alpha$ is left of $\beta$, then $S \sqcup T[1]$ is a semibrick pair.
        \item If $\alpha$ and $\beta$ share a top endpoint and $\beta$ is left of $\alpha$, then $S\sqcup T[1]$ is a semibrick pair.
    \end{enumerate}
\end{lem}

\begin{proof}
    We prove only (1) as the proof of (2) is similar. Without loss of generality, we may take the bottom endpoint to be the lowest one, labeled by~1.
    
    Assume for a contradiction that there exists a nonzero morphism from $S$ to $T$. Then there exists an arc $\gamma$ which is a predecessor closed subarc of $\alpha$ and a successor closed subarc of $\beta$ by Theorem~\ref{thm:BC}. Moreover, there is a nonzero morphism from $S$ to $T$ with image $\sigma(\gamma)$.
    
    Denote by $b$ and $t$ be the bottom and top endpoint of $\gamma$, respectively. If $b$ is not the bottom endpoint of $\alpha$, then by definition $\alpha$ passes to the right of $b$ and either $\beta$ passes to the left of $b$ or $b$ is the bottom endpoint of $\beta$. This contradicts the fact that $\alpha$ is left of $\beta$, and so $b$ must be the bottom endpoint of $\alpha$. Analogous reasoning shows that $b$ is the bottom endpoint of $\beta$ as well. By symmetry, this also means that $t$ is the top endpoint of both $\alpha$ and $\beta$. We then have that $\supp(S) = \supp(\sigma(\gamma)) = \supp(T)$, and so $S = \sigma(\gamma) = T$, a contradiction.
    
    It remains to show that $\Ext^1_\Lambda(S,T) = 0$. Assume for a contradiction that this is not the case. Since $\supp(S) \cap \supp(T) \neq \emptyset$, the middle term of any nonzero extension in $\Ext^1_\Lambda(S,T)$ must be decomposable. By Theorem \ref{thm:BC}, this means there must exist a two-sided arc map for $\alpha$ and $\beta$. Let $((\alpha_1,\gamma,\alpha_2),(\beta_1,\gamma,\beta_2))$ be such a map.
    
    Write $\supp(\gamma) = [p,q]$ and let $r$ be the bottom endpoint of $\alpha$ and $\beta$. Note that since at least one of $\alpha_1$ and $\beta_1$ is nonempty, we must have that $r < p$. Now since $\gamma$ is a successor closed subarc of $\alpha$, we have that either $q$ is the top endpoint of $\alpha$ or $\alpha$ passes to the right of $q$. Likewise, since $\gamma$ is a predecessor closed subarc of $\beta$, we have that either $q$ is the top endpoint of $\beta$ or $\beta$ passes to the left of $q$. Since $\alpha$ is left of $\beta$, this is only possible if $q$ is the top endpoint of both $\alpha$ and $\beta$. It then follows that $\alpha_2$ and $\beta_2$ are both empty, a contradiction.
\end{proof}

\begin{lem}\label{lem:extBothWays}
    Let $S\neq T$ be bricks in $\mods RA_n$, let $\alpha = \sigma^{-1}(S)$ be the arc corresponding to $S$, and let $\beta = \sigma^{-1}(T)$ be the arc corresponding to $T$. Suppose that the top endpoint of $\alpha$ is equal to the bottom endpoint of $\beta$ or vice versa. Then $S\sqcup T[1]$ is not a semibrick pair.
\end{lem}

\begin{proof}
    Assume that the top endpoint of $\alpha$ is equal to the bottom endpoint of $\beta$ and denote this endpoint by $i$. Note that $1 < i < n+1$. Then there exist $j$ and $k$ so that the support of $S$ is $[j,i-1]$ and the support of $T$ is $[i,k]$. But then as there is an arrow $i-1\rightarrow i$ in the quiver of $RA_n$, we have that $\Ext_\Lambda^1(S,T) \neq 0$, and so $S\sqcup T[1]$ is not a semibrick pair. The case where the bottom endpoint of $\alpha$ is equal to the top endpoint of $\beta$ is completely analogous.
\end{proof}

We are now ready to prove Theorem~\ref{arc_pairwise_compat}.

\begin{proof}[Proof of Theorem~\ref{arc_pairwise_compat}]
We prove only (1), as (2) follows immediately from (1) and Proposition~\ref{prop:completability_criterion}.

Let $\D\sqcup \U[1]$ be a semibrick pair for $RA_n$. We know by Proposition~\ref{prop: arc bij} and Remark~\ref{rem: arc bij} that $\sigma^{-1}(\D)$ is a (green) noncrossing arc diagram and $\sigma^{-1}(\U)$ is a (red) noncrossing arc diagram. Thus let $S \in \D$ and $T \in \U$ and let $\alpha = \sigma^{-1}(S)$ and $\beta = \sigma^{-1}(T)$. We note that $\supp(\alpha) \neq \supp(\beta)$ as a consequence of Lemma~\ref{lem:same_support}. Thus we only need to show that the arcs $\alpha$ and $\beta$ satisfy the axioms in Definition~\ref{def:2-colored}(1). We have already shown in Lemmas \ref{lem:intersect_1} and \ref{lem:intersect_2} that $\alpha$ and $\beta$ satisfy~\ref{TC1}. Likewise, we have shown in Lemma \ref{lem:extBothWays} that $\alpha$ and $\beta$ satisfy~\ref{TC2}.

Now let us assume that $\alpha$ and $\beta$ share an endpoint.
We will consider the case where $\tar(\alpha)=\src(\beta)$ (i.e. $\alpha$ and $\beta$ have the same bottom endpoint).
Since we have shown that $\alpha$ and $\beta$ do not cross in their interiors, one of them lies to the left of the other.
By way of contradiction, we assume that $\beta$ is left of $\alpha$.

Suppose that $\alpha$ and $\beta$ pass on the same side of every point where their supports overlap.
If $\src(\alpha)>\tar(\beta)$ then $\alpha$ must pass along the right side of $\tar(\beta)$.
Thus $\beta$ is a predecessor closed subarc of $\alpha$.
Theorem~\ref{thm:BC} implies that there is an epimorphism $S\twoheadrightarrow T$, violating the definition of semibrick pair (see Figure \ref{fig:left_right}(a)).
If $\src(\alpha)<\tar(\beta)$ then $\beta$ must pass on the left side of $\src(\alpha)$.
Thus $\alpha$ is a successor closed subarc of $\beta$.
Theorem~\ref{thm:BC} then implies that there is a monomorphism $S\hookrightarrow T$, violating the definition of semibrick pair (see Figure \ref{fig:left_right}(b)).
Finally, suppose there is some node $y$ which $\alpha$ and $\beta$ pass on different sides, and take $y$ as small as possible.
Write $x$ for $\tar(\alpha)=\src(\beta)$.
Because $\beta$ is left of $\alpha$, we have $\beta$ passing on the left side of $y$ and $\alpha$ on the right.
Note that $\alpha$ and $\beta$ pass on the same side of each node between $x$ and $y$.
Therefore, the subarc (of both $\alpha$ and $\beta)$ with bottom endpoint $x$ and top endpoint $y$ is a predecessor closed subarc of $\alpha$ and a sucessor closed subarc of $\beta$.
As in the proof of Lemma~\ref{lem:intersect_2}, there exists a nonzero homomorphism $S \rightarrow T$, again violating the definition of semibrick pair (see Figure \ref{fig:left_right}(c)).
Therefore, if $\tar(\alpha)=\src(\beta)$, then $\alpha$ is left of $\beta$. We have thus shown that $\alpha$ and $\beta$ satisfy~\ref{TC3}. The proof that $\alpha$ and $\beta$ satisfy~\ref{TC4} is analogous.

So far, we have shown that if $\D\sqcup \U[1]$ is a semibrick pair for $RA_n$, then $(\sigma^{-1}(\D),\sigma^{-1}(\U))$ is a 2-colored noncrossing arc diagram. Now let  $(\mathcal{G},\mathcal{R})$ be a 2-colored noncrossing arc diagram. To complete the proof, it remains to show that $\sigma(\mathcal{G}) \sqcup \sigma(\mathcal{R})[1]$ is a semibrick pair.

We first observe that $\sigma(\mathcal{G})$ and $\sigma(\mathcal{R})$ are semibricks by Proposition \ref{prop: arc bij}. Thus let $\alpha \in \mathcal{G}$ and $\beta \in \mathcal{R}$. If $\alpha$ and $\beta$ do not share any endpoint, then $S\sqcup T[1]$ is a semibrick pair by Lemma \ref{lem:intersect_1}. Likewise, if $\alpha$ and $\beta$ do share an endpoint, this $S \sqcup T[1]$ is a semibrick pair by Lemma \ref{lem:left}. This concludes the proof.
\end{proof}

 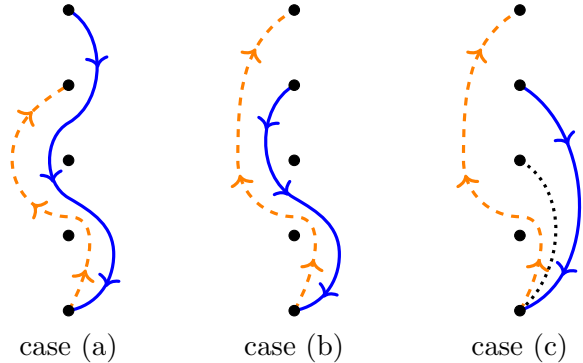
\begin{figure}[h]
 \centering
\begin{tikzpicture}

\begin{scope}[decoration={markings,mark = at position 0.15 with {\arrow{<}},mark=at position 0.5 with {\arrow{<}},mark=at position 0.85 with {\arrow{<}}}]
\draw[very thick,blue,smooth,postaction=decorate](0,-2) to [out = 10, in = 270] (0.6,-1.25) to [out = 90, in = -30] (0,-0.5) to [out = 150, in = -150] (0,0.5) to [out = 30, in = -30] (0,2);
\end{scope}

\begin{scope}[decoration={markings,mark = at position 0.15 with {\arrow{>}},mark=at position 0.5 with {\arrow{>}},mark=at position 0.85 with {\arrow{>}}}]
\draw[very thick,orange,dashed,smooth,postaction=decorate](0,-2) to [out = 60, in = 0] (0,-0.75) to [out = 180,in = -90] (-0.75,0) to [out = 90, in = -150] (0,1);
\end{scope}

\draw[fill] (0,1) circle [radius=.07];
\draw[fill] (0,-2) circle [radius=.07];
\draw[fill] (0,0) circle [radius=.07];
\draw[fill] (0,2) circle [radius=.07];
\draw[fill] (0,-1) circle [radius=.07];

\node at (0,-2.5) {case (a)};


\begin{scope}[decoration={markings,mark = at position 0.15 with {\arrow{<}},mark=at position 0.6 with {\arrow{<}},mark=at position 0.85 with {\arrow{<}}}]
\draw[very thick,blue,smooth,postaction=decorate](3,-2) to [out = 10, in = 270] (3.6,-1.25) to [out = 90, in = -30] (3,-0.5) to [out = 150, in = -150] (3,1);
\end{scope}

\begin{scope}[decoration={markings,mark = at position 0.15 with {\arrow{>}},mark=at position 0.5 with {\arrow{>}},mark=at position 0.85 with {\arrow{>}}}]
\draw[very thick,orange,dashed,smooth,postaction=decorate](3,-2) to [out = 60, in = 0] (3,-0.75) to [out=180,in=-90] (2.25,0) to [out=90,in=-150] (3,2);
\end{scope}

\draw[fill] (3,1) circle [radius=.07];
\draw[fill] (3,-2) circle [radius=.07];
\draw[fill] (3,0) circle [radius=.07];
\draw[fill] (3,2) circle [radius=.07];
\draw[fill] (3,-1) circle [radius=.07];

\node at (3,-2.5) {case (b)};


\begin{scope}[decoration={markings,mark = at position 0.25 with {\arrow{<}},mark=at position 0.75 with {\arrow{<}}}]
\draw[very thick,blue,smooth,postaction=decorate](6,-2) to [out = 20, in = -30] (6,1);
\end{scope}

\begin{scope}[decoration={markings,mark = at position 0.15 with {\arrow{>}},mark=at position 0.5 with {\arrow{>}},mark=at position 0.85 with {\arrow{>}}}]
\draw[very thick,orange,dashed,smooth,postaction=decorate](6,-2) to [out = 60, in = 0] (6,-0.75) to [out=180,in=-90] (5.25,0) to [out=90,in=-150] (6,2);
\end{scope}

\draw[very thick,black,dotted,smooth] (6,-2) to [out = 40,in = -30] (6,0);

\draw[fill] (6,1) circle [radius=.07];
\draw[fill] (6,-2) circle [radius=.07];
\draw[fill] (6,0) circle [radius=.07];
\draw[fill] (6,2) circle [radius=.07];
\draw[fill] (6,-1) circle [radius=.07];

\node at (6,-2.5) {case (c)};

\end{tikzpicture}
\caption{The three cases in the proof of Proposition \ref{arc_pairwise_compat}. In each diagram, the solid blue arc is $\alpha$ and the dashed orange arc is $\beta$. In the third diagram, the dotted black arc is the common subarc corresponding to the image of the morphism $S \rightarrow T$.}
\label{fig:left_right}
\end{figure} 

We are now ready to prove the first part of Corollary~\ref{Main_C}.

\begin{cor}[Corollary~\ref{Main_C}, Part 1]\label{cor:Main_C1}
Let $W$ be a Weyl group of type $A$ and consider a semibrick pair $\D\sqcup\U[1]$ for the preprojective algebra $\Pi_W$. Then $\D\sqcup\U[1]$ is a 2-term simple minded collection if and only if $|\D|+|\U|=n$.
\end{cor}

\begin{proof}
The ``only if'' direction follows immediately from \cite[Corollary 5.5]{KY} (see Proposition \ref{prop:maximal}). Thus assume that $|\D| + |\U| = n$. By Theorem \ref{reduce_to_RAn}, we can consider $\D\sqcup \U[1]$ as a semibrick pair in $\mods RA_n$. It then follows from Theorem~\ref{arc_pairwise_compat} and Theorem \ref{thm:fullSizeComplete} that $(\sigma^{-1}(\D),\sigma^{-1}(\U))$ is a complete 2-colored noncrossing arc diagram. Thus there exists $w \in A_n$ with $\sigma^{-1}(\D) = \delta(w)$ and $\sigma^{-1}(\U) = \overline{\delta}(w)$. Proposition \ref{prop:completability_criterion} then implies that $\D\sqcup \U[1]$ is complete.
\end{proof}

\begin{rem}\label{rem:miz_sec}
We note that Corollary~\ref{cor:Main_C1} is not equivalent to the statement proved independently and concurrently by Mizuno in \cite[Theorem~3.23]{Miz2020}. In that paper, Mizuno introduces ``double arc diagrams'', which are defined to correspond to permutations. On the other hand, the ``2-colored noncrossing arc diagrams'' considered in the present paper are defined to correspond only to semibrick pairs. To make this difference precise, let $w \in A_n$. Then the pair $(\delta(w),\overline{\delta}(w))$ coincides with one of Mizuno's double arc diagrams. On the other hand, we have not yet proved that $(\delta(w),\overline{\delta}(w))$ is a 2-colored noncrossing arc diagram. This is essentially the content of Corollary~\ref{cor:miz_sec}(1) below. Corollary~\ref{cor:miz_sec}(2) further establishes that completable 2-colored noncrossing arc diagrams are precisely those which appear as ``subdiagrams'' of Mizuno's double arc diagrams.
\end{rem}


\begin{cor}\label{cor:miz_sec}
    Let $\mathcal{G}$ be a green noncrossing arc diagram on $n+1$ nodes and let $\mathcal{R}$ be a red noncrossing arc diagram on $n+1$ nodes. Then:
    \begin{enumerate}
        \item There exists a permutation $w \in A_n$ so that $\mathcal{G} = \delta(w)$ and $\mathcal{R} = \overline{\delta}(w)$ if and only if $(\mathcal{G},\mathcal{R})$ is a 2-colored noncrossing arc diagram and $|\mathcal{G}| + |\mathcal{R}| = n$.
        \item There exists a complete 2-colored arc diagram $(\mathcal{G}',\mathcal{R}')$ with $\mathcal{G}\subseteq \mathcal{G}'$ and $\mathcal{R}\subseteq \mathcal{R}'$ if and only if $(\mathcal{G},\mathcal{R})$ is a completable 2-colored noncrossing arc diagram.
    \end{enumerate}
\end{cor}

\begin{proof}
    (1) Suppose first that there exists $w \in A_n$ so that $\mathcal{G} = \delta(w)$ and $\mathcal{R} = \overline{\delta}(w)$. In particular, this means $|\mathcal{G}| + |\mathcal{R}| = n$ by Remark \ref{rem:ascDes}. Therefore, by Propositions \ref{prop:completability_criterion} and \ref{prop:maximal}, we have that $\sigma(\mathcal{G})\sqcup \sigma(\mathcal{R})[1]$ is a 2-term simple minded collection. Proposition \ref{arc_pairwise_compat} then implies that $(\mathcal{G},\mathcal{R})$ is a 2-colored noncrossing arc diagram. The other implication follows immediately from Theorem \ref{thm:fullSizeComplete}.
    
    (2) Suppose $(\mathcal{G},\mathcal{R})$ is a completable 2-colored noncrossing arc diagram. Then Theorem~\ref{arc_pairwise_compat}(2) implies that $\sigma(\mathcal{G})\sqcup \sigma({\mathcal{R}})$ is completable; i.e., it is contained in some 2-term simple minded collection $\D\sqcup \U[1]$. It then follows immediately from Theorem~\ref{arc_pairwise_compat}(2) that $(\sigma^{-1}(\D),\sigma^{-1}(\U))$ is a complete 2-colored noncrossing arc diagram with $\mathcal{G}\subseteq \sigma^{-1}(\D)$ and $\mathcal{R} \subseteq \sigma^{-1}(\U)$. The reverse implication follows immediately from the definitions.
\end{proof}

As a final consequence of Theorem~\ref{arc_pairwise_compat}, we obtain the second part of Corollary~\ref{Main_C}.

\begin{cor}[Corollary \ref{Main_C}, Part 2]\label{cor:Main_C2}
Let $W$ be a Weyl group of type $A$ and consider a semibrick pair $\D\sqcup\U[1]$ for the preprojective algebra $\Pi_W$.  Let $$\mathcal{M} = \{\underline{\dim}(S):S \in \D\} \cup \{-\underline{\dim}(T):T \in \U\}.$$
    Then there exists a $c$-matrix $\mathcal{M}'$ for $\Pi_W$ with $\mathcal{M} \subseteq \mathcal{M}'$.
\end{cor}

\begin{proof}
    By Theorem \ref{reduce_to_RAn}, we can work over the algebra $RA_n$, rather than $\Pi_{W}$.
    Let $\D\sqcup \U[1]$ be a semibrick pair in $\mods RA_n$. Then by Proposition \ref{arc_pairwise_compat}, we have a 2-colored noncrossing arc diagram $(\sigma^{-1}(\D),\sigma^{-1}(\U))$. By Theorem \ref{thm:supportEquiv}, there then exists a completable 2-colored noncrossing arc diagram $(\mathcal{G},\mathcal{D})$ which is support equivalent to $(\sigma^{-1}(\D),\sigma^{-1}(\U))$. It is straightforward to show that $\sigma(\mathcal{G})\sqcup\sigma(\U)[1]$ is a completable semibrick pair whose bricks have the same dimension vectors as those in $\D\sqcup \U[1]$. This shows that these vectors are contained in a $c$-matrix.
\end{proof}

\begin{rem}\label{rem:notPair}
    We note that, although the dimension vectors of any semibrick pair for $\mods RA_n$ are contained in a $c$-matrix, the $c$-vectors of $\mods RA_n$ are not characterized by any pairwise conditions. Indeed, for $n = 2$, there are 2-term simple minded collections ${\tiny\begin{matrix}1\\2\end{matrix}}\sqcup2[1]$, ${\tiny\begin{matrix}2\\1\end{matrix}}\sqcup 1[1]$, and $1[1]\sqcup 2[1]$; however, the set $\{(1,1),(0,-1),(-1,0)\}$ cannot be contained in a $c$-matrix since it has size larger than 2.
\end{rem}

We conclude this section with an example.

\begin{ex}\label{ex:badArcDiagram}
    The three 2-colored noncrossing arc diagrams in Figure~\ref{fig:supportEquiv} correspond (from left to right) to the following semibrick pairs of $RA_4$:
    \begin{eqnarray*}
        \X_1 &=& {\tiny\begin{matrix}2\\3\\4\end{matrix}}\sqcup 4[1]\sqcup {\tiny\begin{matrix}3\\2\\1\end{matrix}}[1]\\
        \X_2 &=& {\scriptsize\begin{matrix}3\\42\end{matrix}}\sqcup 4[1]\sqcup {\tiny\begin{matrix}3\\2\\1\end{matrix}}[1]\\
        \X_3 &=&  {\scriptsize\begin{matrix}3\\42\end{matrix}}\sqcup 4[1]\sqcup {\tiny\begin{matrix}3\\2\\1\end{matrix}}[1]\sqcup 2[1].
        \end{eqnarray*}
        It follows that $\X_3$ is a 2-term simple minded collection, $\X_2$ is completable, and $\X_1$ is not completable. To see this final claim directly, denote $S = {\tiny\begin{matrix}2\\3\\4\end{matrix}}$. Applying the mutation formula from Definition~\ref{mutation}, we then obtain a semibrick pair
        $$\mu_{S}^+(\X_1) = {\scriptsize\begin{matrix}2\\3\end{matrix}}\sqcup{\tiny\begin{matrix}2\\3\\4\end{matrix}}[1]\sqcup{\tiny\begin{matrix}3\\2\\1\end{matrix}}[1].$$
        We observe that there is a nonzero morphism ${\tiny\begin{matrix}3\\2\\1\end{matrix}} \rightarrow {\scriptsize\begin{matrix}2\\3\end{matrix}}$ which has image $S_3$ (the simple representation at the vertex 3). It is straightforward that this morphism is a left $\Filt\left({\scriptsize\begin{matrix}2\\3\end{matrix}}\right)$-approximation, and so $\mu_{S}^+(\X_1)$ is not mutation compatible. By Corollary~\ref{cor:mutationcompatiblepassdown} and Theorem~\ref{thm:mutation}, is follows that $\X_1$ is not completable.
        
        Moreover, we recall that the left and middle diagrams diagrams in Figure~\ref{fig:supportEquiv} are support equivalent. As in the proof of Corollary~\ref{cor:Main_C2}, the bricks in $\X_1$ do indeed have the same (signed) dimension vectors as those in $\X_2$. These vectors are then included in the $c$-matrix consisting of the (signed) dimension vectors of the bricks in $\X_3$.
\end{ex}

\section{Pairwise completability for preprojective algebras}\label{sec:preprojective}


In this section, we show that if $W$ is a finite Weyl group, then $\Pi_W$ has the pairwise 2-simple minded completability property if and only if $W$ is type $A_1$, $A_2$, or $A_3$; that is, we prove Theorem~\ref{Main_D}. One direction of the proof will follow immediately from Corollary~\ref{Main_A}. The other will essentially follow from considering the cases $A_4$ and $D_4$.

In type A, the result can be seen either directly from the corresponding 2-colored noncrossing arc diagrams or from straightforward computation. We take both of these perspectives in Section~\ref{sec:pairwise_typeA}. In Section~\ref{sec:kstone}, we show how the pairwise 2-simple minded completability property can be simplified the class of ``$K$-stone algebras'', which includes the preprojective algebras of Dynkin type. This allows us to address types D and E in Section~\ref{sec:thmE}.

\subsection{Pairwise completability in type A}\label{sec:pairwise_typeA}

In this section, we fully determine which preprojective algebras of Dynkin type A have the pairwise 2-simple minded completability property.

\begin{thm}\label{thm:pairwise_typeA}
    Let $n$ be a positive integer. Then the preprojective algebra $\Pi_{A_n}$ has the pairwise 2-simple minded completability property if and only if $n \leq 3$.
\end{thm}

\begin{proof}
    If $n \leq 3$, then $\Pi_{A_n}$ has the pairwise 2-simple minded completability property as a consequence of Theorem~\ref{thm:3 vertex}. If $n \geq 4$, we consider the 2-colored noncrossing arc diagram $(\mathcal{G}, \mathcal{R})$ on $n+1$ nodes which contains the following arcs:
    \begin{itemize}
        \item An orange arc between the nodes $n-3$ and $n$ which passes to the left of the nodes $n-2$ and $n-1$.
        \item An orange arc between the nodes $n$ and $n+1$.
        \item A blue arc between the nodes $n-2$ and $n+1$ which passes to the right of the nodes $n-1$ and $n$.
        \item An orange arc between the nodes $i$ and $i + 1$ for each $i \in [n-4]$.
    \end{itemize}
    For $n = 4$, $(\mathcal{G},\mathcal{R})$ is the left diagram in Figure~\ref{fig:pairwise}. (It also appears in Figures~\ref{fig:2-coloredDiagram} and~\ref{fig:supportEquiv}). We showed in Example~\ref{ex:arcDiagramNotComplete} that $(\mathcal{G},\mathcal{R})$ is not completable, and the argument naturally extends to the case where $n > 4$.
    
    Again for $n = 4$, the three diagrams on the right of Figure~\ref{fig:pairwise} show that any pair of arcs in $(\mathcal{G},\mathcal{R})$ form a completable noncrossing arc diagram. For $n > 4$, this is still the case. This can be seen by adding orange arc between the nodes $i$ and $i + 1$ for $i \in [n-4]$ to all of the diagrams in Figure~\ref{fig:pairwise}. It the follows from Theorem~\ref{arc_pairwise_compat} that $\Pi_{A_n}$ fails to have the 2-simple minded completability property whenever $n \geq 4$.
\end{proof}

\begin{figure}
\begin{tikzpicture}

\begin{scope}[decoration={markings,mark=at position 0.25 with {\arrow{<}},mark=at position 0.75 with {\arrow{<}}}]
\draw[very thick,blue,smooth,postaction=decorate](0,-1) to [out = 30, in = -30] (0,2);
\end{scope}

\begin{scope}[decoration={markings,mark=at position 0.25 with {\arrow{>}},mark=at position 0.75 with {\arrow{>}}}]
\draw[very thick,orange,dashed,smooth,postaction=decorate](0,-2) to [out = 150, in = -150] (0,1);
\end{scope}

\begin{scope}[decoration={markings,mark=at position 0.5 with {\arrow{>}}}]
\draw[very thick,orange,dashed,smooth,postaction=decorate] (0,1) to (0,2);
\end{scope}

\draw[fill] (0,1) circle [radius=.07];
\draw[fill] (0,-2) circle [radius=.07];
\draw[fill] (0,0) circle [radius=.07];
\draw[fill] (0,2) circle [radius=.07];
\draw[fill] (0,-1) circle [radius=.07];

\begin{scope}[shift={(4,0)}]
\begin{scope}[decoration={markings,mark=at position 0.25 with {\arrow{<}},mark=at position 0.75 with {\arrow{<}}}]
\draw[very thick,blue,smooth,postaction=decorate](0,0) to [out = 30, in = -30] (0,2);
\end{scope}

\begin{scope}[decoration={markings,mark=at position 0.5 with {\arrow{<}}}]
\draw[very thick,blue,smooth,postaction=decorate](0,-1) to (0,0);
\end{scope}

\begin{scope}[decoration={markings,mark=at position 0.25 with {\arrow{>}},mark=at position 0.75 with {\arrow{>}}}]
\draw[very thick,orange,dashed,smooth,postaction=decorate](0,-2) to [out = 150, in = -150] (0,1);
\end{scope}

\begin{scope}[decoration={markings,mark=at position 0.5 with {\arrow{>}}}]
\draw[very thick,orange,smooth,dashed,postaction=decorate] (0,1) to (0,2);
\end{scope}

\draw[fill] (0,1) circle [radius=.07];
\draw[fill] (0,-2) circle [radius=.07];
\draw[fill] (0,0) circle [radius=.07];
\draw[fill] (0,2) circle [radius=.07];
\draw[fill] (0,-1) circle [radius=.07];
\end{scope}

\begin{scope}[shift={(7,0)}]
\begin{scope}[decoration={markings,mark=at position 0.25 with {\arrow{<}},mark=at position 0.75 with {\arrow{<}}}]
\draw[very thick,blue,smooth,postaction=decorate](0,-1) to [out = 30, in = -30] (0,2);
\end{scope}

\begin{scope}[decoration={markings,mark=at position 0.25 with {\arrow{>}},mark=at position 0.75 with {\arrow{>}}}]
\draw[very thick,orange,dashed,smooth,postaction=decorate](0,-2) to [out = 150, in = -150] (0,0);
\end{scope}

\begin{scope}[decoration={markings,mark=at position 0.5 with {\arrow{>}}}]
\draw[very thick,orange,dashed,smooth,postaction=decorate] (0,1) to (0,2);
\draw[very thick,orange,dashed,smooth,postaction=decorate] (0,0) to (0,1);
\end{scope}

\draw[fill] (0,1) circle [radius=.07];
\draw[fill] (0,-2) circle [radius=.07];
\draw[fill] (0,0) circle [radius=.07];
\draw[fill] (0,2) circle [radius=.07];
\draw[fill] (0,-1) circle [radius=.07];
\end{scope}

\begin{scope}[shift={(10,0)}]
\begin{scope}[decoration={markings,mark=at position 0.25 with {\arrow{<}},mark=at position 0.75 with {\arrow{<}}}]
\draw[very thick,blue,smooth,postaction=decorate](0,-1) to [out = 30, in = -20] (0,2);
\end{scope}

\begin{scope}[decoration={markings,mark=at position 0.25 with {\arrow{>}},mark=at position 0.75 with {\arrow{>}}}]
\draw[very thick,orange,smooth,dashed,postaction=decorate](0,-2) to [out = 150, in = -150] (0,1);
\draw[very thick,orange,dashed,smooth,postaction=decorate] (0,0) to [out = 30, in = -60] (0,2);
\end{scope}

\begin{scope}[decoration={markings,mark=at position 0.5 with {\arrow{<}}}]
\draw[very thick,blue,smooth,postaction=decorate] (0,0) to (0,1);
\end{scope}

\draw[fill] (0,1) circle [radius=.07];
\draw[fill] (0,-2) circle [radius=.07];
\draw[fill] (0,0) circle [radius=.07];
\draw[fill] (0,2) circle [radius=.07];
\draw[fill] (0,-1) circle [radius=.07];
\end{scope}
\end{tikzpicture}
\caption{The 2-colored noncrossing arc diagram on the left is not completable; however, deleting any of its three arcs results in a completable 2-colored noncrossing arc diagram. The corresponding semibrick pair is therefore pairwise completable, but not completable.}\label{fig:pairwise}
\end{figure}


\subsection{$K$-stone algebras}\label{sec:kstone}

In this section, we show how the pairwise 2-simple minded completability property can be simplified for so-called ``$K$-stone algebras''. We begin with the following from \linebreak \cite[Definition~4.29]{DIRRT}.

\begin{define}
    Let $S$ be a brick in $\mods\Lambda$. If $\End_\Lambda(S) \cong K$ and $\Ext_\Lambda^1(S,S) = 0$, then $S$ is called a \emph{$K$-stone}.
\end{define}
We will call an algebra \emph{$K$-stone} if all of its bricks are $K$-stones. It is straightforward that when $\Lambda$ is $K$-stone, a module $M \in \mods\Lambda$ is a brick if and only if $\dim(\Hom_\Lambda(M,M)) = 1$.

The following is the main theorem of this section.
\begin{thm}\label{thm:kstone}
    Let $\Lambda$ be a $\tau$-tilting finite $K$-stone algebra. Let $S$ and $T$ be bricks in $\mods\Lambda$ so that $S \sqcup T[1]$ is a semibrick pair. Then $S\sqcup T[1]$ is mutation compatible if and only if one of the following hold.
    \begin{enumerate}
        \item There are no nonzero morphisms $T \rightarrow S$; that is, $S \sqcup T$ is a semibrick.
        \item There is a monomorphism $T \hookrightarrow S$.
        \item There is an epimorphism $T \twoheadrightarrow S$.
    \end{enumerate}
\end{thm}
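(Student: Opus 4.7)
The plan is to prove the two directions separately: the forward direction via the torsion-theoretic interpretation of brick labeling, and the backward direction via the mutation formula (Definition~\ref{mutation}) and the $K$-stone hypothesis.

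For the forward direction, suppose $S \sqcup T[1]$ is mutation compatible. By Theorem~\ref{thm:mutation} it is completable, so Theorem~\ref{torsion_to_simple}(3) provides a torsion class $\T$ with $S \in \D(\T)$ and $T \in \U(\T)$. Unpacking Definition~\ref{def:minextending} yields $S \in \T$, $T \in \T^\perp$, every proper submodule of $S$ in $\T^\perp$, and every proper factor of $T$ in $\T$. For any nonzero $f \colon T \to S$, the image $I = \image(f)$ is both a factor of $T$ and a submodule of $S$. If $f$ were neither mono nor epi, $I$ would be a proper factor of $T$ (so $I \in \T$) and a proper submodule of $S$ (so $I \in \T^\perp$), forcing $I \in \T \cap \T^\perp = 0$ (since any object there is Hom-orthogonal to itself) and contradicting $f \neq 0$. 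Hence every nonzero map $T \to S$ is mono or epi, yielding (1), (2), or (3).

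For the backward direction, the $K$-stone hypothesis gives $\Filt(S) = \add(S)$, so any left minimal $(\Filt S)$-approximation takes the form $g^+_{ST}\colon T \to S^m$ with $m = \dim_K \Hom(T,S)$ and components forming a $K$-basis of $\Hom(T,S)$. In case (1), $m=0$ and $g^+_{ST}$ is vacuously epi, so the formula in Definition~\ref{mutation} gives $\mu^+_S(S \sqcup T[1]) = \{S,T\}[1]$, already of the form $\U'[1]$ (with $\U' = \{S,T\}$ a semibrick thanks to $\Hom(S,T) = 0 = \Hom(T,S)$). In case (2), a given mono $\iota \colon T \hookrightarrow S$ can be included in a basis of $\Hom(T,S)$, so $\ker(g^+_{ST}) \subseteq \ker(\iota) = 0$ and $g^+_{ST}$ is mono, yielding a new semibrick pair $\coker(g^+_{ST}) \sqcup S[1]$ on which I plan to iterate. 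In case (3), I expect to show dually that the existence of an epimorphism $\pi \colon T \twoheadrightarrow S$ forces $g^+_{ST}$ itself to be epi, producing $\mu^+_S(S \sqcup T[1]) = S[1] \sqcup \ker(g^+_{ST})[1]$ of the form $\U'[1]$.

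The main obstacle is the backward direction's delicate point: one must show that when an epimorphism (or monomorphism) $T \to S$ exists in the semibrick pair setting, the full approximation $g^+_{ST}\colon T \to S^m$ is itself epi (respectively mono), a property not automatic when $m > 1$. This should follow from a length-comparison argument combined with long exact sequence computations using the $K$-stone identities $\Hom(S,S)=K$ and $\Ext(S,S)=0$ together with the semibrick pair conditions $\Hom(S,T) = 0 = \Ext(S,T)$, which together tightly control $\dim_K \Hom(T,S)$ in the presence of a given mono or epi. Once this step is established, the iteration in case (2) produces a sequence of left mutations that terminates in a semibrick pair of the form $\U'[1]$; termination is automatic from the $\tau$-tilting finiteness of $\Lambda$, since no pair recurs and each left mutation descends a cover relation in the finite lattice $\tors \Lambda$.
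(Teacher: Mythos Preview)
Your forward direction is correct and essentially identical to the paper's. The backward direction has a real gap in two places.

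First, in case (3) you only ``expect'' that the approximation $g^+_{ST}\colon T \to S^m$ is epi; one epi component $\pi$ does not by itself force surjectivity onto $S^m$ when $m>1$, and you give no argument. Second, and more seriously, your termination argument for the iteration in case (2) is circular: the claim that ``each left mutation descends a cover relation in $\tors\Lambda$'' is valid only for semibrick pairs already known to be completable (equivalently, to correspond to a torsion class via Theorem~\ref{torsion_to_simple}), which is exactly what you are trying to establish. Without that, you have no reason the iterated mutations stay within cases (1)--(3), or terminate at all.

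The paper eliminates both problems at once by proving (Lemma~\ref{lem:kstone}) that whenever a mono or an epi $T\to S$ exists in this setting one actually has $\dim_K\Hom(T,S)=1$, so $m=1$ and the minimal $(\Filt S)$-approximation is $f$ itself. This is precisely the long-exact-sequence computation you allude to but do not carry out: for instance in case (3), applying $\Hom(-,T)$ and then $\Hom(\ker f,-)$ to $\ker f\hookrightarrow T\twoheadrightarrow S$, and using $\Hom(S,T)=\Ext(S,T)=\Ext(S,S)=0$, shows that $\ker f$ is a brick (hence a $K$-stone) with $\Hom(\ker f,S)=0$; applying $\Hom(-,S)$ to the same sequence then gives $\Hom(T,S)\cong\Hom(S,S)=K$. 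With $m=1$ in hand, case (3) finishes in a single mutation ($\mu^+_S(S\sqcup T[1])=S[1]\sqcup\ker f[1]$), and case (2) in exactly two: the first mutation yields $\coker f\sqcup S[1]$, which has the epi $S\twoheadrightarrow\coker f$ and hence falls under case (3). No open-ended iteration, and therefore no termination argument, is needed.
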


We begin building our proof by describing the left minimal $\Filt(S)$-approximation $g_{ST}^+: T\to S$ when $\dim(\Hom_\Lambda(T,S))=1$.

\begin{lem}\label{lem:mutation_comp}
    Let $\Lambda$ be a $\tau$-tilting finite $K$-stone algebra and let $\X=S\sqcup T[1]$ be a semibrick pair.
    If $\Ext_\Lambda^1(S,S) = 0$ and $\dim(\Hom_\Lambda(T,S)) = 1$, then any nonzero map $f: T\rightarrow S$ is a left-minimal $\Filt(S)$ approximation.
    Moreover:
    \begin{enumerate}
        \item if $f$ is a monomorphism, then $\X$ is singly left mutation compatible, and  $\mu^+_S(T[1])= \coker(f)$;
        \item if $f$ is an epimorphism, then $\X$ is singly left mutation compatible, and $\mu^+_S(T[1])$ is equal to $\ker(f)[1]$.
    \end{enumerate}
\end{lem}
\begin{proof}
    We prove the first statement.
    Then the remaining items follow from the definition of left-mutation.
    Note that each module in $\Filt(S)$ is isomorphic to $S^{n}$, the direct sum of $n$ copies of~$S$ where $n\ge1$.
    Let $f:T\to S$ be nonzero.
    Note that any other nonzero map $g:T\to S$ satisfies $g=\lambda f$, where $\lambda$ is a nonzero scalar.
    Therefore any map $T\to S^n$ factors through $f$ as $(\lambda_1f,\lambda_2f, \ldots, \lambda_nf)$.
    We have shown that $f:T\to S$ is a left $\Filt(S)$-approximation.
    Since $S$ is a brick, it is minimal.
\end{proof}

\begin{rem}\label{rem:mutation_comp}
Let $\Lambda$ be an arbitrary finite-dimensional algebra. Let $S\sqcup T[1]$ be a semibrick pair, and assume $\Ext_\Lambda^1(S,S) = 0$. If $\dim(\Hom_\Lambda(T,S))=0$, then the zero map $T\to 0$ is a minimal left $\Filt(S)$-approximation, and $\mu^+_S(T)=T[1]$.
\end{rem}

In the next proposition of Demonet--Iyama--Reading--Reiten--Thomas and the following lemma, we describe when $\dim(\Hom_\Lambda(T,S))=1$.

\begin{prop}\label{prop:kstone}
    Let $\Lambda$ be a $\tau$-tilting finite $K$-stone algebra. Let $S\sqcup T$ be a semibrick in $\mods\Lambda$. Then
    \begin{enumerate}
        \item \cite[Prop 4.33]{DIRRT} $\Filt(S\sqcup T)$ contains at most 4 (isoclasses of) bricks.
        \item \cite[Prop 4.33, 4.34]{DIRRT} There is at most one brick $R$ in $\Filt (S\sqcup T)$ so that $\Hom_\Lambda(S,R) \neq 0$ and $\Hom_\Lambda(R,T)\neq 0$. If such a brick exists, then $\dim(\Hom_\Lambda(S,R)) = 1 = \dim(\Hom_\Lambda(R,T))$ and there is an exact sequence $S \hookrightarrow R \twoheadrightarrow T$.
    \end{enumerate}
\end{prop}

\begin{lem}\label{lem:kstone}
    Let $\Lambda$ be a $\tau$-tilting finite $K$-stone algebra. Let $S$ and $T$ be bricks in $\mods\Lambda$ so that $S\sqcup T[1]$ is a semibrick pair. If there is a monomorphism $T\hookrightarrow S$ or an epimorphism $T \twoheadrightarrow S$, then $\dim(\Hom_\Lambda(T,S)) = 1$.
\end{lem}

\begin{proof}
    Suppose first that there is a monomorphism $f:T \hookrightarrow S$ and consider the short exact sequence
    \begin{equation}T \hookrightarrow S \twoheadrightarrow \coker f.\tag{$\star$}\end{equation}
    Observe that since $S$ is a brick, we must have $\Hom_\Lambda(\coker f, T) = 0$. We claim that in addition, $\Hom_\Lambda(T,\coker f) = 0$ and $\coker f$ is a brick. To see this, first apply the functor $\Hom_\Lambda(S,-)$ to the short exact sequence $(\star)$. This gives an exact sequence
    $$0 = \Hom_\Lambda(S,T) \rightarrow \Hom_\Lambda(S,S) \rightarrow \Hom_\Lambda(S, \coker f) \rightarrow \Ext_\Lambda^1(S,T) = 0,$$
    where the first and last terms are 0 since $S\sqcup T[1]$ is a semibrick pair. This means $K \cong \Hom_\Lambda(S,S) \cong \Hom_\Lambda(S,\coker f)$. Now apply the functor  $\Hom_\Lambda(-,\coker f)$ to the short exact sequence $(\star)$. This gives an exact sequence
    $$0 \rightarrow \Hom_\Lambda(\coker f,\coker f) \rightarrow \Hom_\Lambda(S,\coker f) \rightarrow \Hom_\Lambda(T,\coker f) \rightarrow \Ext_\Lambda^1(\coker f, \coker f).$$
    Since $\dim(\Hom_\Lambda(S,\coker f)) = 1$ we have $\dim(\Hom_\Lambda(\coker f,\coker f)) = 1$ and $\coker f$ is a brick. Since $\Lambda$ is $K$-stone, this implies that $\Ext_\Lambda^1(\coker f, \coker f) = 0$, and so $\Hom_\Lambda(T,\coker f) = 0$ as claimed. Proposition \ref{prop:kstone}(2) then implies $\dim(\Hom_\Lambda(T,S)) = 1$.
    
    Likewise, suppose that there is an epimorphism $f:T\twoheadrightarrow S$ and consider the short exact sequence
    \begin{equation}\ker f\hookrightarrow T \twoheadrightarrow S.\tag{$\dagger$}\end{equation}
    Since $T$ is a brick, we must have that $\Hom_\Lambda(S,\ker f) = 0$. We claim that in addition, $\ker f$ is a brick and $\Hom_\Lambda(\ker f,S) = 0$. To see this, first apply the functor $\Hom_\Lambda(-,T)$ to the short exact sequence $(\dagger)$. This gives an exact sequence
    $$0 = \Hom_\Lambda(S,T) \rightarrow \Hom_\Lambda(T,T) \rightarrow \Hom_\Lambda(\ker f,T) \rightarrow \Ext_\Lambda^1(S,T) = 0,$$
    where the first and last terms are 0 because $S \sqcup T[1]$ is a semibrick pair. This means $K \cong \Hom_\Lambda(T,T) \cong \Hom_\Lambda(\ker f, T)$. Now apply the functor $\Hom_\Lambda(\ker f,-)$ to the short exact sequence $(\dagger)$. This gives an exact sequence
    $$0 \rightarrow \Hom_\Lambda(\ker f,\ker f) \rightarrow \Hom_\Lambda(\ker f,T) \rightarrow \Hom_\Lambda(\ker f,S) \rightarrow \Ext_\Lambda^1(\ker f,\ker f).$$
    Now we know $\Hom_\Lambda(\ker f,\ker f) \neq 0$ and $\dim(\Hom_\Lambda(\ker f, T)) = 1$. Thus $\dim(\Hom_\Lambda(\ker f,\ker f)) = 1$ and $\ker f$ is a brick. Since $\Lambda$ is $K$-stone, this implies that $\Ext_\Lambda^1(\ker f,\ker f) = 0$ and thus $\Hom_\Lambda(\ker f, S) = 0$ as claimed. Propositon \ref{prop:kstone}(2) then implies that $\dim(\Hom_\Lambda(T,S)) = 1$.
\end{proof}

We now prove Theorem \ref{thm:kstone}.

\begin{proof}[Proof of Theorem \ref{thm:kstone}]
     Let $S$ and $T$ be bricks in $\mods\Lambda$ so that $S \sqcup T[1]$ is a semibrick pair.
     
     First, suppose that there are no nonzero morphisms $T \rightarrow S$. 
     By Remark~\ref{rem:mutation_comp}, $\mu^+_S(S\sqcup T[1]) = S[1] \sqcup T[1]$, so $S\sqcup T[1]$ is mutation compatible.
     If there exists $f:T\to S$ that is either a monomorphism or an epimorphism, then Lemma~\ref{lem:kstone} says that $\dim(\Hom_\Lambda(S,T))=1$.
     By Lemma~\ref{lem:mutation_comp}, $f$ is a left minimal $\Filt(S)$-approximation.
     
     If $f$ is a monomorphism, then $\mu^+_S(S\sqcup T[1]) = \coker f \sqcup S[1]$ and there is an epimorphism \linebreak $q:S\twoheadrightarrow \coker f$. Applying Lemma \ref{lem:mutation_comp} again, we see that $q$ is a minimal left $(\Filt \coker f)$-approximation. This means $\mu_{\coker f}^+\circ \mu_S^+(S\sqcup T[1]) = \coker f[1] \sqcup T[1]$, so $S\sqcup T[1]$ is mutation compatible.
     
     Likewise, if $f$ is an epimorphism, then $\mu^+_S(S\sqcup T[1]) = \ker f[1] \sqcup T[1]$, so $S\sqcup T[1]$ is mutation compatible.
     
     Now suppose that $S\sqcup T[1]$ is mutation compatible. Then by Theorems~\ref{thm:mutation} and \ref{torsion_to_simple}, there exists a torsion class $\T \in \tors\Lambda$ so that $T$ is a minimal extending module for $\T$ and $S$ is a minimal coextending module for $\T^\perp$. Now suppose there is a morphism $f:T \rightarrow S$ which is not mono or epi. Then, by Definition \ref{def:minextending}, $\image(f) \in \T\cap \T^\perp$; that is, $\image(f) = 0$. This completes the proof.
\end{proof}

As a consequence of Theorems~\ref{thm:kstone} and~\ref{thm:mutation}, we have the following corollary. This will be useful in proving Theorem~\ref{Main_D}.

\begin{cor}\label{cor:kstone}
 Let $\Lambda$ be a $\tau$-tilting finite $K$-stone algebra, and let $\X= \D\sqcup \U[1]$ be a semibrick pair.
 Then following are equivalent:
 \begin{enumerate}
 \item $\D\sqcup \U[1]$ is pairwise completable.
 \item For each $S\in \D$ and $T[1]\in \U[1]$ one of the following holds:
    \begin{enumerate}
          \item There are no nonzero morphisms $T\rightarrow S$; that is, $S\sqcup T$ is a semibrick.
          \item There is a monomorphism $f:T\hookrightarrow S$.
          \item There is an epimorphism $f: T\twoheadrightarrow S$.
    \end{enumerate}
 \end{enumerate}
\end{cor}

\subsection{Proof of Theorem~\ref{Main_D}}\label{sec:thmE}

We now study the pairwise 2-simple minded completability property for preprojective algebras of types D and E directly. We begin by recalling the following result, which allows us to use Corollary \ref{cor:kstone} in determining whether a semibrick pair is completable.

\begin{thm}\cite[Theorem 1.2]{IRRT}
    Let $W$ be a finite Weyl group. Then $\Pi_W$ is a $K$-stone algebra.
\end{thm}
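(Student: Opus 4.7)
Since this theorem is cited directly from \cite[Theorem 1.2]{IRRT}, one could simply appeal to that reference; nevertheless, here is how I would approach a self-contained proof. The goal is to show every brick $S \in \mods\Pi_W$ satisfies both $\End(S)\cong K$ and $\Ext^1(S,S)=0$. My plan passes through Mizuno's isomorphism $\tors\Pi_W \cong $ weak order on $W$ together with the bijection bricks $\leftrightarrow$ join-irreducible torsion classes from Theorems \ref{min_ext} and \ref{torsion_to_simple}.

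First, I would give a combinatorial classification of bricks: each brick $S$ labels a unique cover relation $\T \covered \T'$ in $\tors\Pi_W$, and under Mizuno's isomorphism this corresponds to a pair $(w,s)$ where $s$ is a simple reflection with $w \covered ws$ in weak order. Such pairs are naturally parameterized by the positive roots, via $(w,s) \mapsto w(\alpha_s)$. So bricks are indexed by positive roots of $W$. Second, I would realize each brick explicitly: using Mizuno's construction via two-sided ideals $I_w \subset \Pi_W$, the minimal (co)extending module at the cover $w \covered ws$ is a concrete subquotient of $I_w/I_{ws}$, whose dimension vector equals the positive root $w(\alpha_s)$.

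With this in hand, showing $\End(S) \cong K$ reduces to checking that the explicit realization admits no nontrivial endomorphisms beyond scalars. Since $S$ is already a brick, $\End(S)$ is a division $K$-algebra, so it suffices to rule out proper extensions of $K$; this can typically be verified from the quiver/relation structure of the subquotient, or by lifting to an endomorphism of $I_w/I_{ws}$ and using that $w$ acts faithfully on its orbit.

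The main obstacle is proving $\Ext^1(S,S)=0$. For this I would use the stably 2-Calabi--Yau property of $\Pi_W$, which gives a duality $\underline{\Ext}^1(S,S)\cong D\,\underline{\End}(S)$ in the stable module category, and combine it with the fact that the positive-root dimension vector of $S$ together with its identification as a layer of a rigid structure forces $\underline{\End}(S)$ to be trivial (all endomorphisms factor through scalars, which themselves factor through projectives in the relevant cases). I would then need to control the projective contribution separately, showing that extensions of $S$ by itself that might arise from projective summands also vanish because $S$ lies in a torsion class whose closure properties (via Theorem~\ref{min_ext}) are incompatible with a nonsplit self-extension. This last step—tying the lattice-theoretic minimality of $S$ as an extending/coextending module to homological rigidity—is where the argument is most delicate and where one is tempted to simply invoke \cite{IRRT} directly.
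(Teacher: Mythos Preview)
The paper gives no proof of this statement at all: it is quoted as \cite[Theorem 1.2]{IRRT} and used as a black box. You acknowledge this correctly at the outset, so the only thing to assess is whether your self-contained sketch could actually be carried out.

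It cannot, as written. The crucial step---your use of the stably 2-Calabi--Yau property to get $\underline{\Ext}^1(S,S)\cong D\,\underline{\End}(S)$---is wrong. The 2-CY duality on the stable category of $\Pi_W$ gives $\underline{\Ext}^1(X,Y)\cong D\,\underline{\Ext}^1(Y,X)$; what you wrote is a 1-CY formula. Setting $X=Y=S$, 2-CY only tells you $\Ext^1(S,S)$ is self-dual, which is automatic over a field and yields no vanishing. Even if your formula held, it would hurt rather than help: for a non-projective brick $S$ the identity morphism does \emph{not} factor through a projective, so $\underline{\End}(S)\cong K\neq 0$, and you would conclude $\Ext^1(S,S)\neq 0$. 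Your suggestion that ``scalars\ldots factor through projectives in the relevant cases'' would force $S$ itself to be projective.

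A route that does work, and is closer in spirit to \cite{IRRT}, goes through Crawley-Boevey's identity for preprojective algebras,
\[
\dim\Ext^1_{\Pi}(M,N)=\dim\Hom_{\Pi}(M,N)+\dim\Hom_{\Pi}(N,M)-(\underline{\dim}\,M,\underline{\dim}\,N),
\]
with $(\,,\,)$ the Cartan form. Once one knows $\End(S)\cong K$ and that $\underline{\dim}\,S$ is a positive (hence real) root, this gives $\dim\Ext^1(S,S)=1+1-2=0$ immediately. But establishing those two inputs is exactly the substantive content of \cite{IRRT}, carried out there via the explicit description of bricks as layers $I_w/I_{ws}$; your sketch for $\End(S)\cong K$ is likewise only a gesture toward that analysis.
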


We begin with type $D_4$.

\begin{prop}\label{prop:D4}
    The algebra $\Pi_{D_4}$ does not have the pairwise 2-simple minded completability property.
\end{prop}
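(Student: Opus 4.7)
The plan is to mirror the template used in the proof of Proposition~\ref{prop:A4}: exhibit an explicit semibrick pair $\X=\D\sqcup\U[1]$ in $\mods\Pi_{D_4}$ which is pairwise completable (equivalently, by Corollary~\ref{cor:kstone}, singly left mutation compatible), then show that some single left mutation $\mu^+_S(\X)$ fails to be singly left mutation compatible, so that Corollary~\ref{cor:mutation} forces the desired conclusion. Since $\Pi_{D_4}$ is $K$-stone by \cite[Theorem~1.2]{IRRT}, Corollary~\ref{cor:kstone} is available and reduces every verification to checking, for each cross pair $(S,T)$ with $S\in\D$, $T[1]\in\U[1]$, that the only morphisms $T\to S$ are zero, mono, or epi.

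First I would choose a triple of bricks for $\Pi_{D_4}$ that involves the central (branching) vertex of the $D_4$ quiver; this is the natural source of new behavior compared with type $A$. Concretely, I would look for $\X=S\sqcup T_1[1]\sqcup T_2[1]$ (or the dual shape $S_1\sqcup S_2\sqcup T[1]$) in which $S$ contains the central vertex in its support with multiplicity larger than one in such a way that a single left mutation $\mu^+_S$ produces a \emph{new} brick $S'$ (arising as a proper cokernel or kernel of the minimal $\Filt(S)$-approximation) whose support meets the remaining $T_i$ in a configuration forcing a nonzero morphism $T_i\to S'$ that factors through a proper submodule and proper quotient. The guiding intuition comes from Proposition~\ref{prop:A4}: there the mutation of the ``long'' brick $\begin{smallmatrix}2\\3\\4\end{smallmatrix}$ at the top simple produced the shorter brick $\begin{smallmatrix}2\\3\end{smallmatrix}$, which together with $\begin{smallmatrix}3\\2\\1\end{smallmatrix}$ created a morphism that was neither mono nor epi. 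In $\Pi_{D_4}$ one can engineer the same phenomenon because the branching vertex lets a brick have two distinct proper quotients whose cokernel sits as a middle term after mutation.

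Once the example is fixed, the verification is mechanical. Using the Loewy layer presentation of modules over $\Pi_{D_4}$, I would (i) confirm that $\X$ is a semibrick pair by computing the relevant $\Hom$ and $\Ext^1$ groups; (ii) check the three cases of Corollary~\ref{cor:kstone}(3) for every cross pair, so that $\X$ is singly left mutation compatible, hence pairwise completable; (iii) apply Definition~\ref{mutation} to write down $\mu^+_S(\X)$ explicitly as $\D'\sqcup\U'[1]$, computing the relevant cokernel or kernel; and (iv) display a specific morphism between two bricks in $\mu^+_S(\X)$ that is nonzero but neither injective nor surjective, thereby failing the criterion of Corollary~\ref{cor:kstone}. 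Then Corollary~\ref{cor:mutation} gives the result.

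The main obstacle is combinatorial: $\Pi_{D_4}$ has many more bricks than $RA_4$, and the presence of the preprojective relation at the central vertex makes the candidate triples less transparent than in type $A$. The search therefore should be organized by the Loewy structure at the branching vertex: the short list of bricks whose dimension at the central vertex is two is small, and among these one can scan for a pair consisting of a ``long'' brick $S$ and a ``transverse'' brick $T$ such that the proper quotient (or kernel) produced by $\mu^+_S$ lies in the same transverse direction as $T$. A second, secondary obstacle is to confirm that the mutation formula applies (i.e.\ that the approximation involved is actually mono or epi); this is handled by Lemma~\ref{lem:mutation_comp}, which only requires $\dim\Hom(T,S)=1$, a condition one reads off directly from the Loewy diagrams.
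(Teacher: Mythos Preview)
Your strategy is correct and is exactly the template the paper follows: exhibit a semibrick pair of size three that is singly left mutation compatible (hence pairwise completable, via Corollary~\ref{cor:kstone}) but whose left mutation at one brick fails single left mutation compatibility, and conclude by Corollary~\ref{cor:mutation}. Your heuristic of looking at bricks whose dimension at the branching vertex is two is also right on target.

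That said, the proposal is a plan rather than a proof: the entire content of this proposition is the explicit example, and you have not produced one. The paper's choice is
\[
\X \;=\; {\tiny\begin{matrix}1\\3\end{matrix}}\;\sqcup\;{\tiny\begin{matrix}1\\2\end{matrix}}\;\sqcup\;{\tiny\begin{matrix}1\\23\\1\\4\end{matrix}}[1],
\]
with the branching vertex labeled $1$. One organizational difference worth noting: rather than starting from $\X$ and mutating forward, the paper first writes down the \emph{bad} semibrick pair
\[
\X' \;=\; {\tiny\begin{matrix}1\\2\end{matrix}}\;\sqcup\;{\tiny\begin{matrix}2\\1\\4\end{matrix}}[1]\;\sqcup\;{\tiny\begin{matrix}1\\3\end{matrix}}[1],
\]
checks directly that $\X'$ is a semibrick pair which is not singly left mutation compatible (the composite ${\tiny\begin{matrix}2\\1\\4\end{matrix}}\twoheadrightarrow S_2\hookrightarrow{\tiny\begin{matrix}1\\2\end{matrix}}$ is neither mono nor epi), and then \emph{defines} $\X=\mu^-_{M}(\X')$ for $M={\tiny\begin{matrix}1\\3\end{matrix}}$. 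This backwards construction has two advantages over your forward plan: the $\Ext$-vanishing you must verify is for the simpler bricks in $\X'$ rather than for the extension $E$ in $\X$, and the fact that $\X$ is a singly left mutation compatible semibrick pair comes for free from the mutation formalism rather than requiring its own verification.
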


\begin{proof}
    We use the presentation of $\Pi_{D_4}$ given in Example~\ref{ex:preproj}.
    Let $M = {\scriptsize\begin{matrix}1\\3\end{matrix}}$, $N = {\scriptsize\begin{matrix}1\\2\end{matrix}}$, $N' = {\tiny\begin{matrix}2\\1\\4\end{matrix}}$, and $E = {\tiny\begin{matrix}1\\23\\1\\4\end{matrix}}$.
    We see immediately that $M, N$, and $N'$ are all bricks and there is an exact sequence $N' \hookrightarrow E \twoheadrightarrow M$.
    We will demonstrate that $M\sqcup N\sqcup E[1]$ is a pairwise completable semibrick pair which is completable.
    
    We will first show that $\X' = N \sqcup N'[1] \sqcup M[1]$ is a semibrick pair. Indeed, we observe that $\Hom_\Lambda(N,N')$, $\Hom_\Lambda(N,M)$, $\Hom_\Lambda(N',M)$, and $\Hom_\Lambda(M,N')$ are all zero. This means that $\X'$ is a semibrick pair if and only if $\Ext_\Lambda^1(N,M) = 0 = \Ext_\Lambda^1(N,N')$.
    
    To see that $\Ext_\Lambda^1(N,M) = 0$, let $M\hookrightarrow F \twoheadrightarrow N$ be an exact sequence. For $i \in Q_0$, let $F_i$ be the vector space of $F$ at the vertex $i$. If $i \neq 1$, let $f_i$ be the linear transformation corresponding to the arrow from $i$ to 1, and let $g_i$ be the linear transformation corresponding to the arrow from 1 to $i$.
    Thus we have $F_1 \cong K^2$, $F_2,F_3 \cong K$, and $F_4 = 0$.
    Likewise, we have $f_4,g_4 = 0$, $f_2\circ g_2 = -f_3\circ g_3$, and $g_2\circ f_2 = 0 = g_3\circ f_3$.
    In particular, we see $f_2\circ g_2$ is not an isomorphism.
    Suppose first that $\dim(\ker(f_2\circ g_2)) = 1$.
    Thus we have $F \cong {\scriptsize\begin{matrix}1\\23\\1\end{matrix}}$, which contradicts that there is a monomorphism $M\hookrightarrow F$.
    We conclude that $f_2\circ g_2 = 0$.
    Moreover, as there are morphisms $M\hookrightarrow F$ and $F\twoheadrightarrow N$, it must be the case that $g_2, g_3 \neq 0$.
    This means $f_2 = 0 = f_3$, so we have either $F \cong S_1 \sqcup {\scriptsize\begin{matrix}1\\23\end{matrix}}$ or $F \cong M\sqcup N$. However, there is non nonzero morphism $M \rightarrow {\scriptsize\begin{matrix}1\\23\end{matrix}}$.\
    We conclude that $\Ext_\Lambda^1(N,M) = 0$.
    
    To see that $\Ext_\Lambda^1(N,N') = 0$, let $N' \hookrightarrow F\twoheadrightarrow N$ be an exact sequence. We define $F_i, f_i, g_i$ as before.
    Thus we have $F_1,F_2 \cong K^2, F_4 \cong K$, and $F_3 = 0$.
    Likewise, we have $f_3 = g_3 = 0$, $f_2\circ g_2 = -f_4\circ g_4$, and $g_2\circ f_2 = 0 = g_4\circ f_4$.
    Now, as there must be a morphisms $N'\hookrightarrow F$ and $F\twoheadrightarrow N$, we observe that $f_4 = 0$ and $g_4,f_2,g_2 \neq 0$.
    In particular, this means $f_2 \circ g_2 = 0$.
    The only possibility is then $F \cong N\sqcup N'$.
    We conclude that $\Ext_\Lambda^1(N,N') = 0$.
    
    We have now shown that $\X' = N\sqcup N'[1]\sqcup M[1]$ is a semibrick pair. Moreover, since the composition $N' \twoheadrightarrow S_2 \hookrightarrow N$ gives a nonzero map which is neither mono or epi, Corollary~\ref{cor:kstone} implies that $\X'$ is not completable. However, $\Hom_\Lambda(M,N) = 0$, so $\X'$ is singly right mutation compatible at $M$.
    
    Let $\X = \mu^-_{M}(\X')$ be the right mutation of $\X'$ at $M$. Now since $M \sqcup N'$ is a semibrick and our algebra is $K$-stone, it follows from Proposition \ref{prop:kstone}(2) that the morphism $M\rightarrow N'[1]$ corresponding to the short exact sequence $N' \hookrightarrow E \twoheadrightarrow M$ is a minimal right $(\Filt M)$-approximation. Since $\Hom_\Lambda(M,N) = 0$, this means that $\X = M\sqcup N\sqcup E[1]$. Finally, as there are epimorphisms $E \twoheadrightarrow M$ and $E \twoheadrightarrow N$, Corollary~\ref{cor:kstone} implies that $\X$ is pairwise completable.
    
    To summarize, we have shown that $\X$ is a pairwise completable semibrick pair. Moreover, we know that $\X'$ is not mutation compatible, and so $\X$ is not mutation compatible either by Corollary~\ref{cor:mutationcompatiblepassdown}. Corollary~\ref{cor:mutation} then implies that $\Pi_{D_4}$ does not have the pairwise 2-simple minded completability property.
\end{proof}

\begin{rem}
    In the proof of Proposition~\ref{prop:D4}, we note that Theorem~\ref{thm:wide} can also be used to deduce the fact that $\X = M\sqcup N \sqcup E[1]$ is not mutation compatible (and hence not completable). Indeed, the only wide subcategory containing all of $M, N$, and $E$ is $\mods\Pi_{D_4}$ itself, which has rank 4. We are thankful to an anonymous referee for pointing out this fact.
\end{rem}

We are now ready to prove our final main theorem.

\begin{thm}[Theorem \ref{Main_D}]\label{thm:preproj}
    Let $W$ be a finite simply laced Weyl group. Then $\Pi_W$ has the pairwise 2-simple minded completability property if and only if $\rk(\Pi_W) \leq 3$ (i.e. $W$ is of type $A_1, A_2$, or $A_3$).
\end{thm}

\begin{proof}
   Theorem~\ref{thm:pairwise_typeA} proves the result in type A, thus suppose $W$ is type D or E. It follows that the quiver of $\Pi_W$ contains a full subquiver of the form
$$
    \begin{tikzcd}[row sep=tiny]
        &2\arrow[dr,"\alpha" below,shift right]\\
        \overline{Q}_D:&&1\arrow[ul,swap,"\alpha*" above,shift right]\arrow[r,"\beta*" below,shift right]\arrow[dl,swap,"\gamma*" above,shift right]&3\arrow[l,swap,"\beta" above,shift right].\\
        &4\arrow[ur,"\gamma" below,shift right]
    \end{tikzcd}
    $$
    Moreover, the representations in $\mods \Pi_W$ whose support is contained in this full subquiver form a wide subcategory which is equivalent to $\mods \Pi_{D_4}$. It then follows from Theorem~\ref{thm:pairwise_wide} that $\Pi_W$ does not have the pairwise 2-simple minded completability property.
\end{proof}
    
\section{Discussion and future work}\label{sec:discussion}
    For algebras which are not $\tau$-tilting finite, there exist semibrick pairs (including pairwise completable semibrick pairs) $\D \sqcup \U[1]$ which are not 2-term simple minded collections such that $|\D| + |\U| \geq \rk(\Lambda)$. For example, consider the quiver $1 \rightarrow 2 \rightarrow 3 \rightarrow 4 \leftarrow 5 \leftarrow 6 \leftarrow 1$ of type~$\widetilde{A}_5$. In this case (so long as the field $K$ is infinite) there are semibricks of arbitrary (finite) size, formed by taking any finite collection of bricks generating homogeneous tubes. Moreover, the generators of the two tubes of rank 3 form a semibrick (of size $6 = \rk(\Lambda)$) which is pairwise completable (when considered as a semibrick pair) but is not a 2-term simple minded collection.

There are several $\tau$-tilting finite algebras which are derived equivalent to $\tau$-tilting infinite algebras (meaning their bounded derived categories are the same up to equivalence as triangulated categories). Thus, even in the $\tau$-tilting finite case, there can exist collections of at least $\rk(\Lambda)$ hom-orthogonal bricks in $\Db(\mods\Lambda)$ which are not (and are not contained in) simple minded collections. We conjecture that when such a collection of bricks is a semibrick pair, this is not the case. That is, if $\Lambda$ is $\tau$-tilting finite, then any semibrick pair $\D \sqcup \U[1]$ with $|\D| + |\U| = \rk(\Lambda)$ is actually a 2-term simple minded collection. This would imply the following.

\begin{conj}
    Let $\Lambda$ be a $\tau$-tilting finite algebra and let $\D \sqcup \U[1]$ be a semibrick pair. Then $\D \sqcup \U[1]$ is completable if and only if the smallest wide subcategory containing $\D\sqcup \U[1]$ has $|\D| + |\U|$ simple objects.
\end{conj}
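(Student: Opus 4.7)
The plan is to handle the two directions of the conjecture separately. The forward direction, that completability implies the wide subcategory has exactly $|\D|+|\U|$ simple objects, should be provable directly from the tools already developed in the paper. Suppose $\D\sqcup\U[1]$ is completable. If $\U=\emptyset$, then $\Filt(\D)$ is the smallest wide subcategory containing $\D\sqcup\U$ and has $|\D|$ simple objects by the Ringel correspondence, so we are done. Otherwise, by Theorem~\ref{thm:mutation} there is a sequence of left mutations taking $\D\sqcup\U[1]$ to $\U'[1]$ for some semibrick $\U'$ with $|\U'|=|\D|+|\U|$, since mutation preserves size. Iterating Lemma~\ref{lem:wide}, any wide subcategory containing $\D\sqcup\U$ must contain every semibrick pair appearing in this mutation sequence, in particular $\U'$, and hence $\Filt(\U')$. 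Conversely, viewing $\U'[1]$ as a 2-term simple minded collection inside the ambient category $\Filt(\U')$ and applying the reverse sequence of right mutations recovers $\D\sqcup\U[1]$ while staying inside $\Filt(\U')$ by Lemma~\ref{lem:wide}. Thus $\Filt(\U')$ is the smallest wide subcategory containing $\D\sqcup\U$, and it has $|\U'|=|\D|+|\U|$ simple objects.

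For the reverse direction, suppose the smallest wide subcategory $W$ containing $\D\sqcup\U$ has $|\D|+|\U|$ simple objects. By Jasso's theorem, $W\simeq\mods\Lambda_W$ for a $\tau$-tilting finite algebra $\Lambda_W$ with $\rk(\Lambda_W)=|\D|+|\U|$, so when regarded inside $W$ the pair $\D\sqcup\U[1]$ is a semibrick pair of full rank for $\Lambda_W$. My plan is to deduce the reverse direction from the sub-conjecture stated just before the main conjecture: that every full-rank semibrick pair for a $\tau$-tilting finite algebra is a 2-term simple minded collection. Granting the sub-conjecture, $\D\sqcup\U[1]$ is a 2-term simple minded collection in $W$, hence mutation compatible in $W$ by Theorem~\ref{thm:mutation}; by Remark~\ref{rem:wide} the mutation formulas agree with those in $\mods\Lambda$, so $\D\sqcup\U[1]$ is mutation compatible in $\mods\Lambda$, and a second application of Theorem~\ref{thm:mutation} yields completability.

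The hard part is therefore the sub-conjecture on full-rank semibrick pairs. For type $A$ preprojective algebras this is exactly Theorem~\ref{main_D}, proved combinatorially via 2-colored noncrossing arc diagrams. In the general setting, the natural approach is through Theorem~\ref{thm:no common quotients}: let $\D'$ be the unique semibrick such that $\D'\sqcup\U[1]$ is a 2-term simple minded collection, which exists by Remark~\ref{rem:torsion_to_simple}; Proposition~\ref{prop:exists surjection} guarantees that for each $S\in\D$ the set $\D'(S)=\{R\in\D':\exists\,S\twoheadrightarrow R\}$ is nonempty, and if one can show these sets are pairwise disjoint as $S$ ranges over $\D$, then $\D=\D'$ and the sub-conjecture follows. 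Ruling out coincidences across distinct elements of $\D$ without a case-by-case combinatorial model appears to be the principal difficulty; candidate strategies include inducting on $\rk(\Lambda)$ by restricting to proper wide subcategories of intermediate rank, or exploiting the brick labeling of $\tors\Lambda$ to obstruct the existence of shared quotients. A uniform argument, not tied to the representation type of $\Lambda$, will likely be the main obstacle.
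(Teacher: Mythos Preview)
Your analysis is accurate, but note that this statement is a \emph{conjecture} in the paper, not a theorem: the paper explicitly says that only the ``only if'' direction is proved in general, and that the full statement would follow from the sub-conjecture on full-rank semibrick pairs. Your forward direction is essentially the same argument as in the proof of Theorem~\ref{thm:wide} (mutate to $\U'[1]$, use Lemma~\ref{lem:wide} to trap the wide subcategory as $\Filt(\U')$), and your reduction of the reverse direction to the sub-conjecture mirrors the paper's own discussion preceding the conjecture.

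So there is no error in your proposal, but there is also no proof of the conjecture: you correctly isolate the sub-conjecture as the missing ingredient and honestly flag it as open. The strategies you sketch (disjointness of the $\D'(S)$ via Theorem~\ref{thm:no common quotients}, induction on rank via intermediate wide subcategories) are reasonable lines of attack but are not carried out, and the paper offers no further progress on them either. In short, your write-up is a faithful summary of what is known rather than a new argument.
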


We have shown this conjecture holds when $|\D| + |\U| \leq 3$ (Theorem \ref{thm:wide}), and the proof of the ``only if'' part holds in general. The recent work of Jin \cite{jin} may provide a framework for proving this result in general.



\section*{Acknowledgements}
The authors are thankful to Kiyoshi Igusa, Haibo Jin, Job Rock, Hugh Thomas, Gordana Todorov, and John Wilmes for insightful discussions and support. A large portion of this work is included in EH's Ph.D thesis, and a portion of this work was completed while EH was affiliated with the Norwegian University of Science and Technology (NTNU). EH thanks NTNU for their support and hospitality. The authors are also thankful to a pair of anonymous referees for their suggestions on how to improve this paper.

\section*{Declarations}
The authors have no competing interests to declare that are relevant to the content of this article.
Data sharing not applicable to this article as no datasets were generated or analyzed during the current study.

\bibliographystyle{amsplain}
\bibliography{pairwise2}

\end{document}